\newtheorem{theorem}{Theorem}[section]
\newtheorem{problem}[theorem]{Problem}
\newtheorem{algorithm}[theorem]{Algorithm}
\newtheorem{corollary}[theorem]{Corollary}
\newtheorem{definition}[theorem]{Definition}
\newtheorem{example}[theorem]{Example}
\newtheorem{lemma}[theorem]{Lemma}
\newtheorem{proposition}[theorem]{Proposition}
\newtheorem{remark}[theorem]{Remark}
\newenvironment{algo}[1]{\begin{algorithm}[#1]~\nopagebreak\\
\noindent\begin{tabular}{rl}}{\end{tabular}\hfill\end{algorithm}}
\newcommand{\function}[4]{
\left\{
\begin{array}{@{}r@{~}c@{~}l}
#1 & \longrightarrow & #2 \\
#3 & \longmapsto & #4
\end{array}
\right.
}
\def\O{\mathcal{O}}
\def\R{\mathcal{R}}
\def\FF{\mathbb{F}}
\def\NN{\mathbb{N}}
\def\PP{\mathbb{P}}
\def\QQ{\mathbb{Q}}
\def\ZZ{\mathbb{Z}}
\def\aaa{\mathfrak{a}}
\def\ppp{\mathfrak{p}}
\def\CCC{\mathfrak{C}}
\DeclareMathOperator{\Aut}{Aut}
\DeclareMathOperator{\End}{End}
\DeclareMathOperator{\Hom}{Hom}
\DeclareMathOperator{\Jac}{Jac}
\DeclareMathOperator{\id}{id}
\DeclareMathOperator{\rank}{rk}
\DeclareMathOperator{\spanspan}{span}
\DeclareMathOperator{\tr}{tr}
\DeclareMathOperator{\vol}{vol}
\g@addto@macro\bfseries{\boldmath}
\g@addto@macro\mdseries{\unboldmath}
\g@addto@macro\normalfont{\unboldmath}
\title[On the computation of endomorphism rings of abelian surfaces]{On the computation of endomorphism rings \\ of abelian surfaces over finite fields}
\author[Anni]{Samuele Anni}
\address{Samuele Anni, Institut de Mathématiques de Marseille, Aix-Marseille Université, France.}
\author[Bisson]{Gaetan Bisson}
\address{Gaetan Bisson, Laboratoire GAATI, University of French Polynesia.}
\author[Iezzi]{Annamaria Iezzi}
\address{Annamaria Iezzi, Univ. Grenoble Alpes, CNRS, Grenoble INP, LJK, 38000 Grenoble, France.}
\author[Lorenzo García]{Elisa Lorenzo García}
\address{Elisa Lorenzo García, Institut de Mathématiques, Université de Neuchâtel, Switzerland,
\newline\phantom{Elisa Lorenzo García, }\hskip\parindent IRMAR UMR 6625, Université de Rennes, France.}
\author[Wesolowski]{Benjamin Wesolowski}
\address{Benjamin Wesolowski, UMPA UMR 5669, ENS de Lyon, France.}
\begin{document}

\begin{abstract}
We study endomorphism rings of principally polarized abelian surfaces over finite fields
from a computational viewpoint with a focus on exhaustiveness.
In particular, we address the cases of non-ordinary and non-simple varieties.
For each possible surface type, we survey known results and, whenever possible,
provide improvements and missing results.
\end{abstract}

\subjclass[2020]{
  	11G07, 11G10, 11G15, 14H52, 14K02, 14K05, 14K10, 14K22.}
\keywords{Abelian surfaces, endomorphism rings, elliptic factors}

\maketitle

\begingroup\def\thefootnote{}
\footnotetext{This work was supported by Agence Nationale de la Recherche under grant ANR-20-CE40-0013.}
\endgroup


\section{Introduction}

Let $A$ be a principally polarized abelian surface defined over a finite field $\FF_q$.
Its endomorphisms defined over the base field form a ring $\End_{\FF_q}(A)$;
the corresponding endomorphism algebra $\QQ\otimes\End_{\FF_q}(A)$ is a division algebra
with center $\QQ(\pi)$, where $\pi$ denotes the Frobenius endomorphism.
Tate \cite{tate} shows that the endomorphism algebra of an abelian variety uniquely identifies its isogeny class.
The ring $\End(A)$ of endomorphisms of $A$ defined over the algebraic closure $\overline\FF_q$,
which we seek to compute, is sometimes strictly larger than $\End_{\FF_q}(A)$ \cite[Theorem~2.4]{silverberg-fields-of-def}.
This ring is an order in the division algebra $K=\QQ\otimes\End(A)$ stable under complex conjugation and containing $\ZZ[\pi,\overline\pi]$, where $\overline\pi=q/\pi$.

Endomorphism rings of abelian varieties are finer-grained invariants than endomorphism algebras. Their computation allows one to efficiently partition isogeny classes into smaller components, making them highly relevant to both computational number theory and cryptography, with  numerous applications, including the evaluation of the hardness of the discrete logarithm problem \cite{jao-miller-venkatesan-dlog,brooks-jetchev-wesolowski} and the computation of class and modular polynomials \cite{drew-hilbert,drew-modpol,drew-modpol-eval}.

We consider two effective variants of this problem.

\begin{problem}\label{prob:abstract}
Given a principally polarized abelian surface defined over a finite field,
obtain an abstract representation of its endomorphism ring, that is,
a canonical division algebra (such as
$\QQ[x]/(x^4+Ax^2+B)$ in the simple, ordinary case) isomorphic to the endomorphism algebra together with an explicit subring isomorphic to the endomorphism ring.
\end{problem}

\begin{problem}\label{prob:explicit}
Given a principally polarized abelian surface defined over a finite field,
obtain an explicit generating set of endomorphisms which can be efficiently evaluated.
\end{problem}

Both problems can be related through an explicit embedding $K\to\QQ\otimes\End(A)$
where $K$ is a division algebra which depends only on the isogeny class \cite{tate}.
Here, our efforts focus on Problem~\ref{prob:explicit}.

Let $p$ denote the characteristic of the base field, such that we have
$q=p^n$ for some positive integer $n$. For an abelian variety of dimension $g$, the $p$-torsion of $A$
satisfies $A[p]\simeq(\ZZ/p\ZZ)^r$ where the integer $0\leq r\leq g$ is called
the \emph{$p$-rank} of $A$ and is denoted by $r(A)$. The $p$-rank is invariant under isogenies and
satisfies $r(A\times B)=r(A)+r(B)$ for every pair of abelian varieties $A$ and $B$ over $\FF_q$.
Abelian varieties of $p$-rank $g$ are called \emph{ordinary} and form the generic case: the moduli space
of ordinary abelian varieties of dimension $g$ has dimension $g(g+1)/2$, which is also the dimension of
the entire moduli space of abelian varieties of dimension $g$. In contrast, abelian varieties whose
$p$-rank vanishes are said to be \emph{supersingular}.

Abelian varieties of dimension $g=1$ are known as elliptic curves and are either ordinary or supersingular.
Their endomorphism algebra is an imaginary quadratic field in the ordinary case and a quaternion algebra in the supersingular case.
Computing their endomorphism rings was first addressed by Kohel \cite{kohel-phd} who provided explicit algorithms of exponential complexity.
In the ordinary case, this was improved to subexponential-time algorithms by Bisson and Sutherland \cite{endomorphism,grh-only} and, more recently, to a polynomial-time algorithm by Robert \cite{Robert22c}.
In the supersingular case, state-of-the-art algorithms remain of exponential complexity \cite{EHLMP20,FIKMN23,PW23}. 

Abelian varieties of dimension $g=2$ are known as abelian surfaces and their $p$-rank is either $0$, $1$, or $2$.
Abelian surfaces of $p$-rank $2$ form a strata of dimension $3$ of the moduli space; those of $p$-rank $1$ and $0$ form a strata of respective dimension $2$ and $1$, see \cite[Theorem 2.3]{GP05}.
Methods for computing their endomorphism rings were designed only in the ordinary and absolutely simple case, first by Eisenträger and Lauter \cite{eisentrager-lauter} who described an algorithm of exponential complexity in $\log(q)$ and later by Bisson \cite{end-g2} who obtained a subexponential algorithm.
\medskip

\noindent\textbf{Our contribution.}\quad
We classify abelian surfaces according to their $p$-rank and whether they are
absolutely simple. Table~\ref{table:cases} enumerates all cases, each of which will be
the topic of a specific section.

\begin{table}
\newcommand{\mtlines}[1]{\begin{tabular}[c]{@{}c@{}}#1\end{tabular}}
\begin{center}
\begin{tabular}{|c||c|c|c|}
\multicolumn{4}{c}{\textsc{simple abelian surfaces}}\\
\hline
$p$-rank & surface type & $\End$ type & Section \\
\hline
\hline
2 & $\Jac(C)$ & $\O$ & \ref{sec:simple-ordinary} \\
\hline
1 & $\Jac(C)$ & $\O$ & \ref{sec:simple-prank1} \\
\hline
\multicolumn{4}{c}{~} \\
\multicolumn{4}{c}{\textsc{non-simple abelian surfaces}} \\
\hline
$p$-rank & surface type & $\End$ type & Section \\
\hline
\hline
2
& \mtlines{$E_1\times E_2$ \\ $E_1$ and $E_2$ ordinary}
& $\begin{pmatrix} \O_1 & \aaa \\ \widehat\aaa & \O_2 \end{pmatrix}$
& \ref{sec:non-simple-prank2} \\
\hline
1
& \mtlines{$(E_1\times E_2)/H$ \\ $E_1$ ordinary, $E_2$ supersingular, $H$ finite}
& suborder of $\O_1\times\R_2$
& \ref{sec:non-simple-prank1} \\
\hline
0
& \mtlines{$E^2$ or $E^2/\alpha$ \\ with $\alpha$ an $\alpha$-group}
& $\begin{pmatrix} \R & \R \\ \R & \R \end{pmatrix}$
& \ref{sec:non-simple-prank0} \\
\hline
\end{tabular}
\bigskip
\end{center}
\caption{Types of abelian surfaces and associated endomorphism rings,
where $\O$ denotes an order in a CM-field,
$\R$ a maximal order in the quaternion algebra $\QQ_{p,\infty}$,
and $\aaa$ is an ideal. See the relevant sections for details.}
\label{table:cases}
\end{table}

The nature of the results we obtain varies with the type of surfaces.
This article's first objective is to survey the literature and, whenever possible,
to improve upon the state-of-the-art or to fill in missing details.
In particular we prove the existence of a general algorithm to solve problem \ref{prob:explicit}.
In the case of simple surfaces of $p$-rank $1$ we show that known methods can be applied
and, in the case of non-simple surfaces, we presents new algorithms to compute the associated elliptic factors.

Our algorithms vary in efficiency. Nevertheless their existence holds intrinsic value, as they contribute to a deeper understanding of the landscape and open avenues for further refinement and exploration.

Section~\ref{sec:preliminaries} contains preliminaries on representing surfaces and computing basic invariants.
Section~\ref{sec:computability} answers the problem of theoretical computability of endomorphism rings and, in particular, of generic endomorphism testing. 
Section~\ref{sec:simple} tackles the case of simple surfaces and discusses the lattice of orders, a classical challenge towards problem \ref{prob:explicit}. 
Section~\ref{sec:non-simple} deals with non-simple surfaces, where the situation demands a more nuanced approach; in particular, we describe two distinct algorithms to find elliptic subcovers.
Section~\ref{sec:extra-auto} finally considers the particular case of surfaces with extra automorphisms, where elliptic factors can be explicitly given.


\section{Preliminaries}
\label{sec:preliminaries}


\subsection{Representing abelian surfaces and their endomorphisms}

Unless otherwise specified, all abelian surfaces are implicitly assumed
to be defined over a finite field and endowed with a principal polarization.
We represent them differently depending on their type as per the following theorem.

\def\citation{\cite[Satz 2]{weil-torelli}}
\begin{proposition}[\citation]
Every principally polarized abelian surface is either
the Jacobian variety of a genus-two curve or
the product of two elliptic curves with the product polarization.
\end{proposition}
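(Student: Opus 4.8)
The plan is to analyze the theta divisor attached to the principal polarization and to split the argument according to whether that divisor is irreducible. Let $(A,\lambda)$ be the given principally polarized abelian surface. Since $\lambda$ is principal it equals $\phi_L$ for an ample line bundle $L$ with $\chi(L)=1$, and as $L$ is ample we have $h^0(L)=\chi(L)=1$, so its class contains a unique effective divisor $\Theta$, well defined up to translation. Because $\omega_A\cong\mathcal{O}_A$, adjunction on the smooth surface $A$ gives $\omega_\Theta\cong\mathcal{O}_A(\Theta)|_\Theta$, whence $2\,p_a(\Theta)-2=\Theta^2=2$ (a theta divisor has self-intersection $g!$) and $p_a(\Theta)=2$. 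A short intersection-theoretic check — every effective divisor on an abelian surface is nef, hence has nonnegative self-intersection, while $\Theta$ is ample — moreover shows $\Theta$ is reduced. The two cases of the statement will correspond exactly to $\Theta$ being irreducible or not.

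Suppose first $\Theta$ is irreducible. I would show it is smooth, hence a genus-two curve: its normalization $\widetilde\Theta\to\Theta$ has genus at most $p_a(\Theta)=2$, so a singular $\Theta$ would have geometric genus $0$ or $1$; the former is impossible as an abelian variety contains no rational curve, and in the latter case the composite $\widetilde\Theta\to\Theta\hookrightarrow A$ would be, up to translation, a nonconstant homomorphism from an elliptic curve, forcing — since it is birational onto $\Theta$ — the curve $\Theta$ to be a translate of a one-dimensional abelian subvariety and in particular smooth, a contradiction. Being ample and irreducible, $\Theta$ lies in no translate of a proper abelian subvariety, hence generates $A$; by the universal property of the Jacobian the inclusion factors as $\Theta\to\Jac(\Theta)\xrightarrow{\,f\,}A$ with $f$ a surjective homomorphism, hence an isogeny by dimension count. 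Riemann's theorem identifies the theta divisor $\Theta'$ of $\Jac(\Theta)$ with the Abel--Jacobi image of $\Theta$, so pushing $[\Theta']$ forward along $f$ recovers $[\Theta]$, giving $f^*[\Theta]=\deg(f)\,[\Theta']$ in the N\'eron--Severi group; intersecting this with itself and using $\Theta^2={\Theta'}^2=2$ forces $\deg(f)=1$. Thus $f$ is an isomorphism of principally polarized abelian varieties and $A\cong\Jac(\Theta)$.

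Now suppose $\Theta$ is reducible, say $\Theta=\Theta_1+\Theta_2$ with $\Theta_1,\Theta_2$ effective and nonzero. Nefness gives $\Theta_i^2\ge 0$ and $\Theta_1\Theta_2\ge 0$; from $2=\Theta^2=\Theta_1^2+2\,\Theta_1\Theta_2+\Theta_2^2$, the ampleness of $\Theta$ (so $\Theta_i\cdot\Theta>0$), and the Hodge index theorem applied to $\Theta_1-\Theta_2$, one gets $\Theta_1^2=\Theta_2^2=0$ and $\Theta_1\Theta_2=1$. A nef effective divisor of self-intersection $0$ on an abelian surface is, numerically, a pullback from an elliptic quotient; write $[\Theta_i]=\pi_i^*[D_i]$ with $\pi_i\colon A\to A/E^{(i)}=:F_i$ and $D_i$ a divisor on the elliptic curve $F_i$. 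Were $E^{(1)}=E^{(2)}$, $\Theta$ itself would be numerically pulled back from a curve, giving $\Theta^2=0$; hence $E^{(1)}\ne E^{(2)}$ and $(\pi_1,\pi_2)\colon A\to F_1\times F_2$ is an isogeny. Evaluating $\Theta_1\Theta_2=1$ through it — using $\pi_1^*[D_1]\cdot\pi_2^*[D_2]=\deg(\pi_1,\pi_2)\cdot\deg(D_1)\cdot\deg(D_2)$ — forces $\deg(\pi_1,\pi_2)=\deg(D_1)=\deg(D_2)=1$. Thus $(\pi_1,\pi_2)$ is an isomorphism carrying $[\Theta_1]$ and $[\Theta_2]$ to the two coordinate fibre classes, so $(A,\Theta)$ is, up to translation, the product of the elliptic curves $F_1$ and $F_2$ with the product polarization.

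I expect the irreducible case to be the main obstacle: the genuinely geometric inputs there are that an irreducible theta divisor is \emph{smooth} (resting on the absence of rational curves on, and the rigidity of morphisms from elliptic curves into, an abelian variety) and that Riemann's theorem lets one upgrade the isogeny $\Jac(\Theta)\to A$ to an \emph{isomorphism of principally polarized} varieties rather than a mere isogeny. Regarding the base field, all of the above runs over $\overline{\FF}_q$; since $\Theta$ is, up to translation, the unique effective divisor in a Galois-stable class, the irreducible/reducible dichotomy is Galois-equivariant, so the conclusion descends to $\FF_q$ — with the two elliptic factors possibly interchanged by Frobenius and hence, in that case, defined only over a quadratic extension of $\FF_q$.
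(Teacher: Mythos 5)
The paper gives no proof of this proposition: it is quoted directly from Weil (Satz~2 of the cited reference), so there is no internal argument to compare yours against. What you have written is the standard theta-divisor proof of Weil's result, and it is essentially correct: the dichotomy according to irreducibility of the unique effective divisor $\Theta$ in the polarizing class, the adjunction computation $p_a(\Theta)=2$, the smoothness of an irreducible $\Theta$ via the absence of rational and of singular ``elliptic'' curves on an abelian surface, the degree-one count for $\Jac(\Theta)\to A$, and the splitting $\Theta=\Theta_1+\Theta_2$ with $\Theta_i^2=0$, $\Theta_1\cdot\Theta_2=1$ forcing $A\cong F_1\times F_2$ as polarized varieties. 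This is a genuinely complete argument where the paper supplies only a citation, which is a net gain.

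A few points deserve tightening. First, your parenthetical justification of reducedness (nefness plus ampleness) does not by itself rule out multiple components; the quick argument is that self-intersections on an abelian surface are even, so writing $\Theta=\sum m_i\Theta_i$ and using $2=\sum m_i(\Theta\cdot\Theta_i)$ with each $\Theta\cdot\Theta_i\geq 1$ leaves only the possibility $\Theta=2\Theta_1$, which contradicts $4\Theta_1^2=2$. Second, in the reducible case you invoke the structure of a nef divisor of square zero for each $\Theta_i$, which requires each $\Theta_i$ to be irreducible; this is true but needs the small extra observation that a further decomposition would produce a component meeting $\Theta$ trivially, contradicting ampleness. Third, the identity $f^*[\Theta]=\deg(f)\,[\Theta']$ should be flagged as a numerical (N\'eron--Severi) identity coming from $f^*f_*[\Theta']=\sum_{t\in\ker f}[\Theta'+t]$ together with the translation-invariance of numerical classes; as an identity of divisor classes it is false in general. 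Finally, your closing remark about the base field is exactly the right caveat: over $\FF_q$ itself the product case includes the possibility that the two factors are Frobenius-conjugate, i.e.\ $A$ is the Weil restriction of an elliptic curve over $\FF_{q^2}$, which is a product only after base change; this is consistent with the paper's later acknowledgment that some non-simple surfaces escape the Jacobian representation.
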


For simple surfaces, we rely on the representation given by the celebrated theorem below.

\begin{theorem}[\cite{weil-torelli} and {\cite[Section 5.10]{Adleman}}]
Let $A$ be a principally polarized abelian surface defined over a finite field $\FF_q$.
If $A$ is simple over the quadratic extension $\FF_{q^2}$, then
$A$ is $\FF_q$-isomorphic to the Jacobian of a projective smooth curve of genus two.
\end{theorem}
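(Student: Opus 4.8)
The plan is to combine Weil's theorem on the structure of principally polarized abelian surfaces with an analysis of the obstruction to a polarized variety being a Jacobian. By the cited proposition of Weil, a principally polarized abelian surface $A$ over $\FF_q$ is either the Jacobian of a smooth projective genus-two curve, or a product $E_1\times E_2$ of elliptic curves equipped with the product polarization; these two possibilities are mutually exclusive because the theta divisor of a Jacobian is irreducible whereas that of a product decomposes. So it suffices to rule out the second case under the hypothesis that $A$ remains simple over $\FF_{q^2}$. If $A\cong E_1\times E_2$ over $\FF_q$ then already over $\FF_q$ — and a fortiori over $\FF_{q^2}$ — the surface $A$ is isogenous to a product of two elliptic curves, hence not simple; this contradiction shows $A$ must be (geometrically) of Jacobian type.

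The remaining issue is one of \emph{fields of definition}: Weil's theorem produces a curve $C$ with $\Jac(C)\cong A$ as polarized varieties over $\overline{\FF_q}$, but to conclude that $C$ can be chosen over $\FF_q$ one needs a descent argument, and this is where the reference to \cite[Section 5.10]{Adleman} enters. The strategy is the following. The Torelli map is injective on points, so the $\overline{\FF_q}$-isomorphism class of the curve $C$ is a canonical invariant attached to the polarized surface $A$; applying the Frobenius $\sigma\in\Gal(\overline{\FF_q}/\FF_q)$ sends $(A,\lambda)$ to an isomorphic polarized surface (since $A$ and $\lambda$ are defined over $\FF_q$), hence fixes the isomorphism class of $C$. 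Thus $C^\sigma\cong C$ over $\overline{\FF_q}$, which says exactly that $C$, as a point of the coarse moduli space $M_2$, is $\FF_q$-rational. The genuine work — and the expected main obstacle — is passing from an $\FF_q$-rational point of the coarse space to an actual curve defined over $\FF_q$: coarse moduli rationality does not automatically give a model, and one must either invoke the fact that genus-two curves are hyperelliptic (so their moduli problem is rigidified by the hyperelliptic involution, which is canonical and hence defined over the field of moduli) together with the standard theory of the field of moduli for hyperelliptic curves, or else appeal directly to the descent result of \cite[Section 5.10]{Adleman}, which establishes precisely that a genus-two curve whose Jacobian is defined over $\FF_q$ with a principal polarization over $\FF_q$ admits a model over $\FF_q$.

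Once a model $C$ over $\FF_q$ is in hand, one finishes by upgrading the $\overline{\FF_q}$-isomorphism $\Jac(C)\xrightarrow{\sim}A$ to one over $\FF_q$. Here the key input is that an isomorphism between two principally polarized abelian varieties over a field, once it exists over the algebraic closure and is compatible with polarizations defined over the base field, is essentially unique up to automorphisms of the polarized variety, and the relevant automorphism group is finite; a Galois-descent / twisting argument then shows the isomorphism can be taken $\FF_q$-rational, possibly after replacing $C$ by a quadratic twist — but since we are free to choose the model $C$, this causes no trouble. In outline: construct $C/\FF_q$ via the field-of-moduli argument above, observe $\Jac(C)$ is $\FF_q$-isomorphic to $A$ by the uniqueness of the Torelli preimage together with the vanishing of the relevant Galois cohomology (or directly by \cite{Adleman}), and conclude. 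I expect roughly ninety percent of the difficulty to sit in the field-of-moduli-versus-field-of-definition step, which is exactly why the statement cites a specific treatment of it rather than proving it inline.
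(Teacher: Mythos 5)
Your overall architecture (geometric dichotomy, exclusion of the split case, then descent of the curve and of the isomorphism) is the right one, and the second half of your sketch is essentially sound: over a finite field the field-of-moduli obstruction you worry about always vanishes, and the passage from a $\overline{\FF}_q$-isomorphism $\Jac(C)\simeq A$ to an $\FF_q$-isomorphism after replacing $C$ by a suitable twist is handled by the bijection between twists of $C$ and twists of $(\Jac(C),\theta)$ coming from $\Aut(C)\simeq\Aut(\Jac(C),\theta)$. The genuine gap is in your first paragraph. Weil's dichotomy is a statement over the algebraic closure, so the case you must exclude is ``$A_{\overline{\FF}_q}$ is isomorphic, as a polarized surface, to a product $E_1\times E_2$''; you only exclude the subcase where this product decomposition is itself defined over $\FF_q$. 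There is a third possibility, and it is the entire reason the hypothesis concerns $\FF_{q^2}$ rather than $\FF_q$: the surface can be a \emph{twisted} product, the standard example being the Weil restriction $\Res_{\FF_{q^2}/\FF_q}(E)$ of an elliptic curve $E/\FF_{q^2}$ not isogenous to the base change of any curve over $\FF_q$. Such a surface is simple over $\FF_q$, is geometrically a product with the product polarization (hence not a Jacobian), and your argument never touches it; as written, your proof would equally establish the false statement in which ``simple over $\FF_{q^2}$'' is weakened to ``simple over $\FF_q$''.

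To close the gap you must actually use the quadratic extension. If $A_{\overline{\FF}_q}\simeq E_1\times E_2$ as polarized surfaces, the theta divisor is geometrically the union of two elliptic curves, canonically determined up to translation; Galois permutes this two-element set of components, so both components — hence two elliptic subvarieties of $A$ — are defined over $\FF_{q^2}$, and $A$ is not simple over $\FF_{q^2}$. With that in place, simplicity over $\FF_{q^2}$ forces the theta divisor to be geometrically irreducible, so $A_{\overline{\FF}_q}$ is a Jacobian, and your descent argument then applies.
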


Non-simple surfaces are either:
\begin{itemize}
\item isomorphic to a product of isogenous
elliptic curves; 
\item Jacobian varieties of algebraic curves which are not isomorphic to such a product.
\end{itemize}

The endomorphism ring of a product of two elliptic curves $E_1$ and $E_2$ satisfies
\[
\End(E_1\times E_2)=\begin{pmatrix}\End(E_1) & \Hom(E_2,E_1) \\ \Hom(E_1,E_2) & \End(E_2)\end{pmatrix}
\]
and its computation is thus reduced to computing
their individual endomorphism rings as well as an isogeny between them
which may be obtained using the methods of \cite{galbraith-isogenies}.
For surfaces $A$ not isomorphic to such a product,
the first step is to identify two elliptic curves $E_1$, $E_2$ and an isogeny $A\to E_1\times E_2$.
This is addressed in Section~\ref{sec:ellfactor}.

Henceforth, we thus focus on surfaces given as the Jacobian variety of a genus-two curve
for which points can be represented in Mumford coordinates \cite{mumford-tata2} and
the group law computed using Cantor's algorithm \cite{cantor}.
This representation can be extended to non-simple 
abelian surfaces which belong to the isogeny class of a Jacobian variety:
such surfaces $A$ can be represented as a couple $(\varphi,C)$ whereby $\varphi:\Jac(C)\to A$ is an isogeny.
Note however that this representation excludes some non-simple abelian surfaces \cite[Theorem~1]{howe-maisner-nart-ritzenthaler}.

Separable isogenies of simple abelian surfaces and, in particular, their endomorphisms,
may be represented by their kernel since they are finite;
from a kernel, the corresponding isogeny can be efficiently evaluated using Vélu's formulas \cite{velu}
and later improvements \cite{bernstein-defeo-leroux-smith}.
Isogenies of non-simple abelian surfaces are represented more naïvely as algebraic maps given by tuples of rational fractions.


\subsection{Computing basic invariants}

Let $A=\Jac(C)$ be the Jacobian variety of a genus-$2$ hyperelliptic curve $C$
defined over a finite field $\FF_q$ where $q=p^n$. Let
\[
f_{A}(t)=t^4+a_1t^3+a_2t^2+qa_1t+q^2\in \mathbb Z[t]
\]
denote the characteristic polynomial of its Frobenius endomorphism $\pi$ and set 
\[
\Delta:=a_1^2-4a_2+8q,\qquad \delta:=(a_2+2q)^2-4qa_1^2.
\]

We can determine whether $A$ is absolutely simple
from the coefficients of $f_A$ using \cite[Theorem~6]{howe-zhu}.
Moreover, the variety $A$ has $p$-rank two if and only if
$p\nmid a_2$ and $\Delta$ is not a square in $\ZZ$;
it has $p$-rank one if and only if $p\nmid a_1$,  $v_p(a_2)\geq \frac{n}{2}$, $\delta$ is not a square in $\mathbb Z_p$ and $\Delta$ is not a square in $\ZZ$;
see \cite[Theorem~2.9]{maisner-nart}.

We note that, alternatively, the $p$-rank can be determined by
looking at the splitting pattern of $p$ in the CM-field $K$
and by the method of \cite{oconnor-mcguire-naehrig-streng}.

We will sometimes also use the $a$-number which is defined as $a(A)=\dim\Hom_{\FF_p}(\alpha_p,A)$
where $\alpha_p$ is the only local-local group scheme over $\FF_p$.
See \cite[Lemma 2.2]{EP07} for its explicit computation.

Henceforth, we denote by $A\sim B$ the fact that the abelian varieties $A$ and $B$ are isogenous.
We denote by $R\simeq R'$ the fact that the groups or algebras $R$ and $R'$ are isomorphic.


\section{Computability of endomorphism rings}
\label{sec:computability}

Here we present two generic methods for computing endomorphism rings of abelian surfaces.
They are of theoretical interest merely because they prove their computability in the general case.
However their time complexity are prohibitive and subsequent sections will present specialized methods which achieve better complexities for each subcase.


\subsection{Generic endomorphism testing}

We borrow the following definition from~\cite{Wes21}
and refer the reader to~\cite{benhdr} for a more precise statement.

\begin{definition}\label{def:efficient-representation}
Let~$\varphi : A \to B$ be an isogeny between two abelian varieties defined over a finite field $\FF_q$.
An \emph{efficient representation} of $\varphi$ with respect to a given algorithm is some data $D_\varphi \in \{0,1\}^*$ such that, on input $D_\varphi$ and $P \in A(\FF_q)$, the algorithm returns the evaluation $\varphi(P)$ in polynomial time in $\operatorname{length}(D_\varphi)$ and $\log q$.
\end{definition}

We denote by $\alpha^\dagger$ the Rosati involution of an endomorphism $\alpha$ of an abelian variety $A$.
This yields a positive definite bilinear form on $\End(A)$ defined by $\langle\alpha,\beta\rangle=\tr(\alpha\circ\beta^\dagger)$.
The quadratic structure is computationally available thanks to the following lemma.

\begin{lemma}\label{lemma:scalarproductofisogenies}
Let $A$ be a principally polarized abelian surface. Given
two endomorphisms $\alpha,\beta \in \End(A)$, an efficient representation of $\alpha$ and $\beta^\dagger$, and an integer $D$ such that $\langle \alpha,\alpha\rangle,\langle \beta,\beta\rangle < D$, one can compute
$$\langle \alpha,\beta\rangle = \tr(\alpha \circ \beta^\dagger)$$
in polynomial time in the length of the input.
\end{lemma}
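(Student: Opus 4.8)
The plan is to reduce the computation of the bilinear form $\langle\alpha,\beta\rangle = \tr(\alpha\circ\beta^\dagger)$ to a computation of the reduced degree of a well-chosen isogeny, which can be recovered from its action on torsion points. First I would recall that for an endomorphism $\gamma\in\End(A)$ of an abelian surface, the characteristic polynomial of $\gamma$ acting on the Tate module has degree $4$, and its trace $\tr(\gamma)$ is related to the degree of $\gamma$ and of $\gamma\pm 1$ (or more robustly of $n-\gamma$ for several small integers $n$) by polynomial identities; equivalently, $\deg(n-\gamma)=\prod_i(n-\lambda_i)$ where the $\lambda_i$ are the eigenvalues, so evaluating $\deg(n-\gamma)$ at enough integer points $n$ determines all the symmetric functions of the $\lambda_i$, in particular $\tr(\gamma)=\sum_i\lambda_i$. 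Applying this with $\gamma=\alpha\circ\beta^\dagger$ gives $\langle\alpha,\beta\rangle$.

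The second ingredient is that $\deg\psi$ for an isogeny (or endomorphism) $\psi$ with efficient representation can be computed in polynomial time: pick a prime $\ell\neq p$ of size polynomial in the input, compute a basis of the $\ell$-torsion $A[\ell]$ over a small extension $\FF_{q^k}$ (the degree $k$ is polynomially bounded since the $\ell$-torsion field of an abelian surface has degree dividing $|\mathrm{GSp}_4(\ZZ/\ell)|$, hence $k=O(\ell^{O(1)})$), evaluate $\psi$ on this basis using the efficient representation, and read off the $\ell$-adic valuation of $\deg\psi$ from the rank of the induced map on $A[\ell]\simeq(\ZZ/\ell)^4$; doing this for several primes $\ell$ and using that $\deg\psi$ is bounded (here by $D^2$ or so, via $\langle\alpha,\alpha\rangle,\langle\beta,\beta\rangle<D$ and Cauchy--Schwarz on the positive definite form) pins down $\deg\psi$ exactly by CRT. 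One must also handle the separable-versus-inseparable distinction, but since we only need $\deg\psi\bmod\ell$ for $\ell\neq p$ and the total bound is known, the inseparable part contributes a known power of $p$ that can be isolated.

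The two remaining technical points to check are: first, that an efficient representation of $\alpha\circ\beta^\dagger$, and of $n-\alpha\circ\beta^\dagger$ for small $n$, can be assembled from efficient representations of $\alpha$ and of $\beta^\dagger$ — this is immediate since composition and addition of efficiently-representable isogenies are again efficiently representable (evaluate componentwise, then add using the group law, which is polynomial-time in Mumford coordinates via Cantor's algorithm, or in the product representation for $E_1\times E_2$); second, that the integers $n$ for which we need $\deg(n-\gamma)$, together with the resulting bound on these degrees, stay polynomial in $\log D$ and $\log q$ — choosing $n\in\{0,1,2,3,4\}$ suffices to recover the degree-$4$ characteristic polynomial's coefficients, and $\deg(n-\gamma)\leq (n+\max_i|\lambda_i|)^4$ with $|\lambda_i|$ bounded in terms of $\langle\gamma,\gamma\rangle$, hence polynomially in $D$. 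The main obstacle I expect is the careful bookkeeping of the torsion-field degree $k$ and the choice of sufficiently many primes $\ell$ so that their product exceeds the a priori bound on $\deg(n-\gamma)$ while keeping each $\ell$ (and hence each $\FF_{q^k}$) of polynomial size; this is where the quantitative input "$\langle\alpha,\alpha\rangle,\langle\beta,\beta\rangle<D$" is essential, as it converts the qualitative statement "the degree is finite" into an explicit bound driving the CRT reconstruction.
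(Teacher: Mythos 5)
Your overall strategy is the same Schoof-style one the paper uses: reduce modulo small primes $\ell$ by computing the action of $\gamma=\alpha\circ\beta^\dagger$ on $A[\ell]$ (which is possible because efficient representations compose and the $\ell$-torsion lives in a polynomially bounded extension), and reassemble the integer answer by CRT using the bound supplied by $D$ (via Cauchy--Schwarz, $|\tr(\alpha\circ\beta^\dagger)|=|\langle\alpha,\beta\rangle|<D$). The paper is more direct, though: since $\tr(\gamma)$ is the trace of $\gamma$ on the Tate module, the trace of the $4\times 4$ matrix of $\gamma$ acting on $A[\ell]$ already gives $\tr(\gamma)\bmod\ell$, with no need for your detour through the degrees $\deg(n-\gamma)$ for $n=0,\dots,4$ and interpolation of the characteristic polynomial.

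That detour also contains the one genuinely broken step. You propose to ``read off the $\ell$-adic valuation of $\deg\psi$ from the rank of the induced map on $A[\ell]$'' and then to ``pin down $\deg\psi$ exactly by CRT.'' Neither half works as stated: the rank of $\psi$ on $A[\ell]$ only detects $\dim_{\FF_\ell}\ker(\psi)[\ell]$, which is not $v_\ell(\deg\psi)$ (a cyclic kernel component of order $\ell^2$ contributes $2$ to the valuation but only $1$ to that dimension), and a list of valuations at small primes is not CRT data --- $\deg(n-\gamma)$ may have large prime factors you never probe. The repair is small and standard: $\deg\psi$ is the determinant of $\psi$ on the Tate module, so the determinant of the matrix of $\psi$ on $A[\ell]$ equals $\deg\psi\bmod\ell$, and these residues do combine by CRT against your a priori bound on $\deg(n-\gamma)$. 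With that replacement --- or, more simply, with the whole degree computation replaced by the trace of the matrix of $\gamma$ on $A[\ell]$ --- your argument coincides with the paper's; the remaining bookkeeping you flag (torsion-field degrees, composing representations, the quantitative role of $D$) is correct and indeed more detailed than what the paper writes.
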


\begin{proof}
This is inspired by~\cite[Lemma~7]{PL17}, which itself follows a strategy similar to Schoof’s point counting algorithm.
The trace of an endomorphism is the trace of its action on the Tate module $T_\ell(A)$ for any prime $\ell$. Thus the action of $\alpha \circ \beta^\dagger$ on $A[\ell]$ reveals $\tr(\alpha \circ \beta^\dagger) \bmod \ell$. Since $|\tr(\alpha \circ \beta^\dagger)| \leq D$, it is sufficient to evaluate the action of $\alpha \circ \beta^\dagger$ on $A[\ell]$ for small primes $\ell$ such that $\prod_\ell \ell > 2D$, then recover $\langle \alpha,\beta\rangle$ with the Chinese remainder theorem.
\end{proof}

\begin{definition}
A \emph{good representation} for an isogeny $\varphi : A \to B$  is a triple $(r,r^\dagger,D)$ where $r$ is an efficient representation of $\varphi$, $r^\dagger$ is an efficient representation of $\varphi^\dagger = \lambda_A^{-1} \circ \varphi^\vee \circ \lambda_B$, where $\lambda_A$ and $\lambda_B$ denote the respective polarizations of $A$ and $B$, and $D$ is an integer such that $\tr(\varphi \circ \varphi^\dagger) \leq D$.
\end{definition}

\begin{remark}
Let $\varphi$ be an $(\ell,\ell)$-isogeny.
Its kernel $K\subset A[\ell]$ is an efficient representation of $\varphi$.
Furthermore, the $(\ell,\ell)$-isogeny $\varphi^\dagger$ has kernel $\varphi(A[\ell])$.
We deduce that $(K,\varphi(A[\ell] ),\ell)$ is a good representation of $\varphi$.
\end{remark}

Per Lemma~\ref{lemma:scalarproductofisogenies}, given a good representation for $\alpha,\beta \in \End(A)$, one can compute $\langle \alpha,\beta\rangle$ in polynomial time in the length of the input.

\begin{proposition}\label{prop:augment-subring}
There exists an algorithm which, given a collection of endomorphisms $\alpha = (\alpha_i)_{i=1}^n \in \End(A)$ in good representation, outputs a good representation of a basis of $\spanspan_{\mathbb Q}(\alpha) \cap \End(A)$. In particular, if $\alpha$ has full rank, the output is a basis of $\End(A)$.
\end{proposition}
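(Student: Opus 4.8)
The plan is to view the problem as computing a saturation: starting from a finite-index sublattice $\ZZ\langle\alpha_1,\dots,\alpha_n\rangle$ of $L:=\spanspan_{\QQ}(\alpha)\cap\End(A)$, we want to enlarge it to $L$ itself while maintaining good representations of the basis elements. First I would use Lemma~\ref{lemma:scalarproductofisogenies}: the Gram matrix $G=(\langle\alpha_i,\alpha_j\rangle)_{i,j}$ is computable in polynomial time from the good representations, since each $\langle\alpha_i,\alpha_j\rangle$ is bounded by the integers $D_i$ supplied in the input. From $G$ one reads off the rank $r$ of $\alpha$ (discard a maximal linearly independent subfamily, so WLOG $\alpha_1,\dots,\alpha_r$ are independent), the discriminant of the sublattice they span inside $L$, and hence an explicit integer bound $N$ on the index $[L:\ZZ\langle\alpha_1,\dots,\alpha_r\rangle]$ — for instance $N^2$ divides $\det G$, so $N\le\sqrt{\det G}$.

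The core step is then a bounded search. For each prime power $m\le N$ (or, more efficiently, each prime $\ell$ with $\ell^2\mid\det G$ and each $i$ from $1$ up to $v_\ell(\det G)$), and for each tuple $(c_1,\dots,c_r)\in\{0,\dots,m-1\}^r$, form the candidate endomorphism $\beta=\tfrac1m\sum_j c_j\alpha_j$. One must decide whether $\beta\in\End(A)$; this is exactly the generic endomorphism-testing problem, which can be settled because $\beta$ comes equipped with a candidate good representation built from those of the $\alpha_j$: an efficient representation of $m\beta=\sum c_j\alpha_j$ is obtained by composing and adding the given efficient representations, likewise for $(m\beta)^\dagger=\sum c_j\alpha_j^\dagger$, and $\beta\in\End(A)$ iff the map $m\beta$ kills $A[m]$, which is checked by evaluating it on generators of $A[m]$. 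When $\beta$ passes the test, we have produced not only $\beta$ but a good representation of it: an efficient representation of $\beta$ is " divide the output of $m\beta$ by $m$ inside $\End(A)$'', more concretely one precomputes $\beta(P)$ on a basis of a suitable torsion subgroup, or simply carries $(c,m,\{\text{rep of }\alpha_j\})$ as the data $D_\beta$, with the evaluation algorithm solving $mQ=(\sum c_j\alpha_j)(P)$ for $Q$; the Rosati dual and the trace bound $\langle\beta,\beta\rangle=\tfrac1{m^2}c^\top G c$ are likewise available. Replacing some $\alpha_i$ by $\beta$ strictly decreases the index, so after finitely many rounds — at most $\log_2 N$ of them — the sublattice stabilizes and equals $L$; we output the resulting good representations.

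The main obstacle is keeping the representations \emph{efficient} (polynomial length) throughout, rather than merely finite: naive composition makes the description of iterated sums and the trace bounds grow, and dividing by $m$ must be done in a way that does not blow up the data. I expect to handle this by always expressing the current basis, and any candidate $\beta$, as a rational-coefficient combination of the \emph{original} $\alpha_i$ with denominators bounded by $N$ — so the combinatorial data $(c,m)$ has $O(r\log N)$ bits — and by noting that evaluation of such a $\beta$ at a point $P\in A(\FF_{q^k})$ reduces to evaluating the fixed $\alpha_i$ (polynomial cost each, by hypothesis) and then dividing the result by the integer $m$ in the group $A$, i.e. solving a division-by-$m$ problem in a group of known order, which is polynomial-time. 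A secondary subtlety is that the endomorphism test requires working over the extension of $\FF_q$ containing $A[m]$, whose degree is polynomial in $m$ and hence in $N$; since $N$ is itself bounded by $\sqrt{\det G}$ and $\det G$ is at most exponential in the input length, this is acceptable for a computability statement, though (as the surrounding text stresses) the resulting complexity is far from practical.
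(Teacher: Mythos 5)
Your proposal is correct and follows essentially the same route as the paper: compute the Gram matrix via Lemma~\ref{lemma:scalarproductofisogenies}, bound the index of $\spanspan_{\ZZ}(\alpha)$ in $\spanspan_{\QQ}(\alpha)\cap\End(A)$ by $\sqrt{\det G}$, then enumerate candidate numerators modulo each relevant prime and test divisibility by evaluating on the $\ell$-torsion. Your extra care about carrying the data $(c,m,\{\text{rep of }\alpha_j\})$ so that the output basis remains in good representation is a welcome filling-in of a detail the paper leaves implicit, but it is not a different argument.
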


\begin{proof}
From Lemma~\ref{lemma:scalarproductofisogenies}, one can compute the Gram matrix $G = (\langle \alpha_i, \alpha_j\rangle)_{i,j}$ for any collection $\alpha$. Up to selecting a subset, we can assume the $\alpha_i$ linearly independent, hence $G$ is of dimension and rank $n$. Let $S = \spanspan_{\mathbb Z}(\alpha)$ and $R = \spanspan_{\mathbb Q}(\alpha) \cap \End(A)$. We have $S \subset R$, and $[R:S] = \vol(S)/\vol(R)$ (where the volume is with respect to the scalar product $\langle -,-\rangle$). We have $\vol(S)^2 = \det(G) \in \mathbb Z$, and similarly, $\vol(R) \in \mathbb Z$ since $\langle -,-\rangle$ is integral on $\End(A)$.
In particular, $[R:S]^2$ is a divisor of $\det(G)$.
The algorithm then proceeds as follows. For each prime $\ell$ whose square divides $\det(G)$:
\begin{enumerate}[(Step 1)]
    \item \label{augment-step-1} Let $L \subset S$ be a list of representatives of the finite quotient $S/\ell S$.
    \item \label{augment-step-2} For each $\beta \in L$, if $\beta(A[\ell]) = 0$, then we have $\beta/\ell \in R$, we find a basis of $S + \mathbb Z \cdot \beta$, and update $S$ and $G$ accordingly to correspond to this larger ring. Return to~\ref{augment-step-1} with the same prime $\ell$.
    \item \label{augment-step-3} If no $\beta$ with $\beta(A[\ell]) = 0$ was found we have $\ell \nmid [R:S]$ (i.e., we have reached maximality locally at $\ell$), return to~\ref{augment-step-1} with the next prime $\ell$.
\end{enumerate}
Termination follows from the fact that upon each return to~\ref{augment-step-1}, either $[R:S]$ has been divided by a factor $\ell$ (which can only happen finitely many times before reaching $[R:S] = 1$), or one progresses forward in the list of prime factors of $\det(G)$. Correctness follows from the fact that for each $\ell$, one eventually reaches~\ref{augment-step-3}, at which point $[R:S]$ is guaranteed not be be divisible by $\ell$ anymore; so once the list of prime factors of $\det(G)$ is exhausted, we get that $[R:S]$ has no prime factor, hence $[R:S] = 1$.
\end{proof}

Note that, the list $L$ computed in \ref{augment-step-1} is of exponential length in $\log(\ell)$ and hence in the input size.
Testing whether $\beta(A[\ell]) = 0$ in \ref{augment-step-2}, when done naively, is also of exponential complexity.

By Proposition~\ref{prop:augment-subring}, it only remains to prove that there exists an algorithm that produces a full-rank collection of endomorphisms of $A$. One can compute the rank of any collection as the rank of the Gram matrix. Since the rank of $\End(A)\otimes \mathbb Q$ is known, there is an algorithm to check whether a collection has full rank.

\begin{theorem}
There exists an explicit algorithm that, given a principally polarized abelian surface $A$ defined over a finite field $\FF_q$, outputs a basis of its endomorphism ring $\End_{\FF_q}(A)$.
\end{theorem}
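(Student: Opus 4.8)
The plan is to reduce to Proposition~\ref{prop:augment-subring}: together with the fact that the rank of $\End_{\FF_q}(A)$ over $\ZZ$ is computable, it is enough to produce by some effective procedure a full-rank family of endomorphisms of $A$ defined over $\FF_q$ and given in good representation. First one runs point counting over $\FF_q$ and $\FF_{q^2}$ to obtain the Frobenius characteristic polynomial $f_A$; by Tate's theorem \cite{tate} this pins down the isomorphism type of the $\QQ$-algebra $K=\QQ\otimes\End_{\FF_q}(A)$, hence the target rank $r=\dim_\QQ K$. One also fixes a model in which endomorphisms are encoded by tuples of rational fractions in fixed coordinates on $A$ (Mumford coordinates when $A=\Jac(C)$), the group law being available through Cantor's algorithm.

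The family is then assembled by exhaustive search on the total degree $N$ of the defining fractions. For $N=1,2,3,\dots$ one enumerates the finitely many tuples of rational fractions of degree at most $N$ with coefficients in $\FF_q$ and decides, by elimination, which of them define a morphism $A\to A$ fixing the origin; since a morphism of abelian varieties fixing the origin is an endomorphism, each such tuple yields an $\alpha\in\End_{\FF_q}(A)$ together with an efficient representation, namely itself. From $\alpha$ and the polarization $\lambda_A$ one computes, again as a tuple of fractions, the Rosati dual $\alpha^\dagger=\lambda_A^{-1}\circ\alpha^\vee\circ\lambda_A$; then, evaluating the degrees $\deg([m]-\alpha\circ\alpha^\dagger)$ for a few integers $m$ and interpolating the degree-$2g$ characteristic polynomial of $\alpha\circ\alpha^\dagger$ on a Tate module, one recovers $\tr(\alpha\circ\alpha^\dagger)=\langle\alpha,\alpha\rangle$, and hence a valid bound $D$. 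This produces a good representation of $\alpha$. After each value of $N$ one forms, via Lemma~\ref{lemma:scalarproductofisogenies}, the Gram matrix of the endomorphisms found so far; as soon as its rank equals $r$, one returns the output of Proposition~\ref{prop:augment-subring} applied to this family.

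Correctness is immediate, since by Proposition~\ref{prop:augment-subring} the output is a basis of $\spanspan_\QQ(\cdot)\cap\End_{\FF_q}(A)=\End_{\FF_q}(A)$ as soon as the input spans $K$. For termination it suffices to see that the search meets a full-rank family: as $K=\QQ\otimes\End_{\FF_q}(A)$, the ring $\End_{\FF_q}(A)$ is a full-rank $\ZZ$-lattice in $K$, so any $\ZZ$-basis $\beta_1,\dots,\beta_r$ is a $\QQ$-basis of $K$ made of honest $\FF_q$-rational endomorphisms; each $\beta_i$ is an algebraic map given by fractions of some finite degree, so at the stage $N$ equal to the largest of these degrees the search has produced a full-rank family and halts. (When $\ZZ[\pi,\overline\pi]$ already spans $K$ — in particular whenever $A$ is simple, so that $K=\QQ(\pi)$ — one may skip the search and feed a $\ZZ$-basis of $\ZZ[\pi,\overline\pi]$, each element efficiently evaluable through the Frobenius, straight to Proposition~\ref{prop:augment-subring}.)

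The substantive ingredients, all standard in principle, are the effective computation of the dual morphism $\alpha^\vee$ — equivalently the action of $\alpha$ on $\Pic^0(A)$ — in the chosen model, and the control of the degree of the fractions defining $\alpha^\dagger$ and $\alpha\circ\alpha^\dagger$ by that of $\alpha$, which is what makes $D$ effective; I expect these to be a matter of bookkeeping rather than a real obstacle. The true price is the cost: the number of tuples inspected at stage $N$ is exponential in $N$, and $N$ must generally be taken huge — already $\pi$ is reached only around $N=q$ — so the algorithm is of purely theoretical value, as announced at the beginning of this section.
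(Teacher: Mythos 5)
Your proposal is correct and follows essentially the same route as the paper: exhaustive enumeration of candidate endomorphisms as tuples of rational fractions of increasing degree, testing which are genuine endomorphisms, monitoring the rank of the Gram matrix until it reaches the (computable) rank of $\QQ\otimes\End_{\FF_q}(A)$, and then handing the full-rank family to Proposition~\ref{prop:augment-subring}. You supply somewhat more detail than the paper on how to package each endomorphism into a good representation (computing $\alpha^\dagger$ and the bound $D$) and on termination, but the argument is the same.
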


\begin{proof}
At this point, we are only concerned with showing that $\End_{\FF_q}(A)$ is computable, with no concern for efficiency. We therefore propose the following naïve strategy. Given a principally polarized abelian surface $A$, exhaustively enumerate all endomorphisms of $A$ by enumerating all possible maps (as tuples of rational fractions of increasing degrees) and testing which are indeed endomorphisms (e.g., testing that the maps indeed send $A$ to itself using Gröbner bases).
This results in a sequence $(\alpha_i)_{i=0}^n$ such that $\End_{\FF_q}(A) = \{\alpha_i\}_i$, and the algorithm generates each $\alpha_i$ in that order. 

From any initial sequence $(\alpha_i)_{i=0}^n$, one can compute the corresponding Gram matrix, and a basis of the lattice it generates. The main difficulty is in deciding at which $n$ to stop (i.e., when $\spanspan_{\mathbb Z}(\alpha_i)_{i=0}^n = \End_{\FF_q}(A)$). This is where Proposition~\ref{prop:augment-subring} comes in: it is sufficient to reach a point where $\rank(\spanspan_{\mathbb Z} (\alpha_i)_{i=0}^n) = \rank(\spanspan_{\mathbb Z} (\End_{\FF_q}(A)))$, and the algorithm of Proposition~\ref{prop:augment-subring} takes care of the rest. The quantity $\rank(\spanspan_{\mathbb Z} (\alpha_i)_{i=0}^n)$ is the rank of the Gram matrix, computable by Lemma~\ref{lemma:scalarproductofisogenies}. The quantity $\rank(\spanspan_{\mathbb Z} (\End_{\FF_q}(A)))$ is the rank of the endomorphism algebra, which can be computed in polynomial time.
\end{proof}

Note that all endomorphisms are defined over extensions of the base field of bounded degree \cite[Theorem~2.4]{silverberg-fields-of-def}.
Therefore, to compute the ring $\End(A)$ of endomorphisms defined over the algebraic closure,
one only needs to run the above algorithm over a bounded extension of the base field.


\subsection{Lifting to characteristic zero}

Several algorithms have been designed for the computation of endomorphism rings of abelian varieties defined over a field of characteristic zero \cite{bruin-sijsling-zotine,EndMany,EndLombardo}
and have been used to verify the correctness of the endomorphism data in the $L$-functions and modular forms database (LMFDB) \cite{LMFDB}
which contains $66,158$ curves of genus two with small minimal absolute discriminant as of February 2025.

Since abelian surfaces of positive $p$-rank defined over finite fields admit a canonical lift \cite{LST}, the computation of the abstract structure of their endomorphism rings may be transported to characteristic zero.
Nevertheless, the computation of such canonical lifts is exponential in $\log(p)$ \cite{maiga-robert-odd,maiga-robert-even,maiga-robert-medium}; furthermore, it is unclear whether characteristic-zero methods for computing endomorphism rings yield better overall complexity than their finite fields counterpart as we are unaware of rigorous complexity bounds for those methods.

Note that, one could avoid the computation of the canonical lift, by taking any lift of the curve with extra endomorphisms \cite{oort-lifting} and thus obtain the right order up to an index a power of $p$ (see Corollary 6.1.2. in \cite{GL12}).

\begin{lemma}
Let $A$ be an abelian variety defined over a number field $K$ and $\mathfrak{p}$ be a prime of good reduction of norm $p$. The reduction map $\iota:\End{A}\rightarrow\End{A_{\mathfrak{p}}}$ is injective and $[(\iota(\End{A})\otimes\QQ)\cap\End{A_{\mathfrak{p}}}:\End{A}]$ is a power of $p$.
\end{lemma}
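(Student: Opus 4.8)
The statement has two parts: injectivity of the reduction map on endomorphisms, and that the index $[(\iota(\End A)\otimes\QQ)\cap\End A_{\mathfrak p}:\End A]$ is a power of $p$. For injectivity, I would argue that an endomorphism $\alpha$ of $A$ whose reduction vanishes must vanish on torsion prime to $p$: for any prime $\ell\neq p$, reduction mod $\mathfrak p$ is injective on $A[\ell^k]$ (good reduction, and $\ell$ invertible in the residue field), so $\iota(\alpha)=0$ forces $\alpha$ to kill all $\ell$-power torsion, hence $\alpha=0$ since prime-to-$p$ torsion is Zariski dense. This shows $\iota$ is injective and moreover that $\iota\otimes\QQ\colon \End A\otimes\QQ\to \End A_{\mathfrak p}\otimes\QQ$ is injective, so the intersection on the left makes sense as an order containing $\End A$ in a common algebra.

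**The index is a power of $p$.** Write $\O = \End A$ and $\O' = (\iota(\O)\otimes\QQ)\cap\End A_{\mathfrak p}$, identifying $\O$ with its image. Both are orders in the same semisimple $\QQ$-algebra, with $\O\subseteq\O'$, so $[\O':\O]$ is finite; I must show no prime $\ell\neq p$ divides it. The key input is again good reduction at $\mathfrak p$: for $\ell\neq p$, the reduction map identifies the $\ell$-adic Tate module $T_\ell A \xrightarrow{\sim} T_\ell A_{\mathfrak p}$ compatibly with the action of endomorphisms. Concretely, suppose $\beta\in\O'$, i.e. $\beta = \iota(\gamma)/m$ for some $\gamma\in\O$ and integer $m$, and $\beta$ lies in $\End A_{\mathfrak p}$. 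I want to show that if $\ell\mid m$ with $\ell\neq p$ then in fact $\gamma/\ell$ already lies in $\O$, so that the $\ell$-part contributes nothing to the index. Since $\beta\in\End A_{\mathfrak p}$, its action on $A_{\mathfrak p}[\ell]$ is defined, i.e. $\iota(\gamma)$ kills $A_{\mathfrak p}[\ell]$; transporting through the isomorphism $A[\ell]\cong A_{\mathfrak p}[\ell]$ induced by reduction, $\gamma$ kills $A[\ell]$, so $\gamma/\ell$ is a genuine isogeny of $A$, i.e. an element of $\O$. This is the same local-maximality mechanism used in Step~\ref{augment-step-2} of Proposition~\ref{prop:augment-subring}. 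Iterating over all prime-power factors of $m$ different from $p$ shows $[\O':\O]$ is supported only at $p$.

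**Main obstacle.** The substantive point is making precise that reduction at a prime of good reduction is an isomorphism on prime-to-$p$ torsion and is equivariant for the endomorphism actions — this is standard (it follows from smoothness of the Néron model and the fact that $[\ell]$ is étale away from $p$), but it should be invoked cleanly, perhaps citing the relevant statement in \cite{silverberg-fields-of-def} or a standard reference on Néron models; I would phrase it as: for every positive integer $N$ prime to $p$, reduction induces an isomorphism $A[N]\xrightarrow{\sim}A_{\mathfrak p}[N]$ of Galois modules compatible with $\iota$. Granting that, both parts are short. A secondary subtlety is that $\End A\otimes\QQ$ need not be commutative (it could be a quaternion or matrix algebra), but nothing in the argument uses commutativity: "order", "finite index", and "kills $A[\ell]$" all make sense verbatim, and the prime-to-$p$ part of the index is detected integrally at each $\ell$ exactly as above. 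One should also note $\mathfrak p$ has good reduction by hypothesis, so $A_{\mathfrak p}$ is an abelian variety over the residue field $\FF_p$, and that $\dim A = \dim A_{\mathfrak p}$ keeps the Tate modules of the same rank, which is what makes the reduction map on $T_\ell$ an isomorphism rather than merely injective.
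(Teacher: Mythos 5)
Your proof is correct and is essentially the standard argument: injectivity because a morphism vanishing on the (Zariski-dense, reduction-faithful) prime-to-$p$ torsion is zero, and the index claim because for $\ell\neq p$ the reduction isomorphism $A[\ell]\xrightarrow{\sim}A_{\mathfrak p}[\ell]$ transports "$\iota(\gamma)$ kills $A_{\mathfrak p}[\ell]$" to "$\gamma$ kills $A[\ell]$", whence $\gamma/\ell\in\End(A)$, so $\O'/\O$ has no $\ell$-torsion. The paper itself supplies no proof for this lemma (it only points to Corollary~6.1.2 of \cite{GL12}), and your argument is the one underlying that reference; the only cosmetic slip is calling $\gamma/\ell$ an ``isogeny'' — $\gamma$ need not be one, but the factorization of a homomorphism killing $\ker[\ell]$ through $[\ell]$ holds regardless.
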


\begin{example}
Consider the hyperelliptic curve defined over the field $\FF_{11}$ by
\[
C:y^2=x^6 + 6x^5 + 4x^4 + 2x^3 + 5x^2 + 7x + 2.
\]
Its Frobenius endomorphism $\pi$ has characteristic polynomial $t^4+10t^2+121$
and we find that the order $\ZZ[\pi,\overline\pi]$ has index $2^5$ in the ring of integers of the quartic field $K=\QQ(\pi)$.

A lift of this curve which admits extra endomorphisms is
\[
C':y^2 + x^3y = x^3 + 2;
\]
which is referenced by LMFDB as ``Genus 2 curve \texttt{5184.a.46656.1}''.
The endomorphism ring of its Jacobian variety is $\End_{\QQ}(\Jac(C'))=\ZZ[\sqrt{-2}]$;
thus, the endomorphism ring of $\Jac(C)$ contains the order $\O=\ZZ[\pi,\overline\pi,\sqrt{-2}]$
which has index $2$ in the maximal order $\O_K$.
This reduces the possibilities for $\End(\Jac(C))$ to just two cases: $\O$ and $\O_K$.
By computing $(2,2)$-isogenies, we eliminate the case $\O_K$ and deduce that $\End(\Jac(C))=\O$.
\end{example}


\section{Simple abelian surfaces}
\label{sec:simple}

Simple abelian surfaces are either of $p$-rank 2 (ordinary) or of $p$-rank 1.


\subsection{Simple, ordinary case}
\label{sec:simple-ordinary}

When $A$ is ordinary, its endomorphism algebra $K=\QQ(\pi)$ is a quartic CM-field, that is, an imaginary quadratic extension of a totally real number field $K^+$.
The endomorphism ring $\End(A)$ is an order of $K$ containing $\ZZ[\pi,\overline\pi]$ and stable under complex conjugation. Conversely,
all such orders are endomorphism rings, see  \cite{waterhouse}.
In particular, the conductor of $\End(A)$ divides the index $\nu=\left[\O_K:\ZZ[\pi,\overline\pi]\right]$ where $\O_K$ denotes the ring of integers of $K$.

Classically, the problem of computing $\End(A)$ has been split into two subproblems:
first, to determine whether a given order $\O$ is contained in the endomorphism ring $\End(A)$;
second, to select suitable candidate orders $\O$ so as to determine $\End(A)$.
The latter is covered in Section~\ref{sec:ascending}; from a high level perspective, it computes
the localization of $\End(A)$ locally at each prime $\ell$ dividing the conductor $\nu$,
from which $\End(A)$ is eventually deduced.

\subsubsection{Testing candidate orders}

Assume a candidate order $\O$ of $K$ is fixed and we wish to determine whether
$\O\subset\End(A)$ holds. Depending on the prime $\ell$ and other factors, 
we might elect to choose one of the methods below.
\smallskip

\paragraph{\bfseries The method of Eisenträger--Lauter \cite{eisentrager-lauter}.}
This method exploits the following observation: let $\alpha$ be an endomorphism of $A$ and let $\ell$ be an integer coprime to $p$;
then we have $\alpha/\ell\in\End(A)$ if and only if $A[\ell]\subset\ker(\alpha)$ holds.
This can be computed efficiently when the $\ell$-torsion subgroup $A[\ell]$ is defined over relatively small extension fields.
Generically, however, the torsion $A[\ell]$ may be defined over extensions of degree as large as $\ell^g$
and the primes $\ell$ themselves, being factors of $\nu$, are only bounded by $2^{g(2g-1)}q^{g^2/2}$.
This algorithm thus has an exponential complexity in the worst case.
\smallskip

\paragraph{\bfseries The method of Bisson \cite{end-g2}.}
This approach exploits complex multiplication theory and, more specifically, the faithful action of the polarized class group $\CCC(\O)$ of Shimura \cite{shimura-taniyama}
on the set of isomorphism classes of principally polarized abelian varieties with endomorphism ring $\O$.
The main idea is to construct polarized ideals which are trivial in $\CCC(\O)$, to evaluate the corresponding isogenies, and to check that those are in fact endomorphisms. When we have $\O\subset\End(A)$, this is always the case.
Conversely, if all principal polarized ideals map to endomorphisms, one shows \cite{quartic-ccc} that the order $\O$ is never ``far'' from being a suborder of $\End(A)$.
Applying the method of Eisenträger--Lauter locally at small primes allows one to overcome this and obtain a full converse.
This yields an algorithm of heuristic, subexponential complexity to determine whether the inclusion $\O\subset\End(A)$ holds.

An abelian variety $A$ is said to have \emph{maximal real multiplication}
if its endomorphism ring $\End(A)$ contains $\O_{K^+}$. In this particular case,
assuming furthermore that the narrow class group of $K^+$ is trivial,
Springer \cite{springer} improves Bisson's method by exploiting the classical class group instead of the polarized class group. This yields an algorithm which relies on fewer heuristics.
\smallskip

\paragraph{\bfseries The method of Robert \cite{Robert22c}.}
Robert exploits Kani's diamond lemma \cite{Kani97} to obtain a polynomial-time algorithm testing whether certain maps of elliptic curve are in fact endomorphisms.
Although it is written in the case of ordinary elliptic curves, there is no obstacle to its generalization to simple, ordinary abelian surfaces.
However, this does not necessarily give a polynomial-time algorithm for computing endomorphism rings, since this depends on the number of orders to test, a problem to which we now turn.

\subsubsection{Ascending the lattice of orders}
\label{sec:ascending}

The general idea of \cite[Algorithm 6.2]{end-g2} is as follows:
starting from $\O'=\ZZ[\pi,\overline\pi]$, iterate over orders $\O$ which are \emph{directly above} $O'$
(that is, such that $\O' \subset \O$ and no other order strictly lies between them);
if $\O\subset \End(A)$, then set $\O'\leftarrow O$ and repeat until there are no more orders to test;
eventually, return $\End(A)=\O'$. 

For elliptic curves, the CM-field $K$ is a quadratic CM-field.
Locally at any prime $\ell$, its lattice of orders is thus linear and there is only one order to consider above every given one;
this can be efficiently exploited, see \cite[Section 5]{grh-only}.
Coupled with Robert's testing method, this yields a polynomial-time algorithm for computing the endomorphism ring of ordinary elliptic curves.

In quartic CM-fields, however, there can be exponentially many such orders stable under complex conjugation.
In particular, this is always the case when $\ell^3\mid [\O_K:\O]$; see \cite[Lemma 5.3]{fieker-hofmann-sanon}.
One may try to limit the number of orders to test by first computing the real part of the endomorphism ring.

Denote by $K^+$ the maximal totally real subfield of $K$, let $\O=\End(A)$ and let $\O_+=\O\cap \O_{K^+}$.
Note that $\O_+$ can be computed in polynomial time, as $K^+$ is quadratic and therefore its lattice of orders is linear locally at each prime $\ell$.
Let $\O^\#$ be the largest order in $\O_K$  such that $\O^\#\cap\O_{K^+}=\O_+$, which exists as a consequence of the following lemma.

\begin{lemma}\label{lemmaO+}
For $i=1,2$, let $\O_i$  be two suborders of $\O_K$ stable under complex conjugation such that $\O_i\cap\O_{K^+}=\O_+$. Locally at any prime $\ell\neq 2$ we have $(\O_1+\O_2)\cap\O_{K^+}=\O_+$.
\end{lemma}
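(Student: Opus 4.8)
The statement is a local claim about orders in the quartic CM-field $K$ stable under complex conjugation, so I will work $\ell$-adically: fix a prime $\ell \neq 2$ and pass to the completion, replacing $\O_K$, $\O_{K^+}$, $\O_i$, $\O_+$ by their tensor products with $\ZZ_\ell$ (or localizations at $\ell$). The key structural fact is that complex conjugation $\sigma$ acts on $K$ as an involution fixing $K^+$ pointwise, and since $\ell \neq 2$, the element $e = \tfrac{1}{2}(1+\sigma)$ is an idempotent in $\operatorname{End}_{\ZZ_\ell}(\O_{K}\otimes\ZZ_\ell)$ realizing the ``averaging'' projection onto the $\sigma$-fixed part. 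Concretely, for any $x \in \O_K \otimes \ZZ_\ell$ one has $x + \sigma(x) \in \O_{K^+}\otimes\ZZ_\ell$ and $x - \sigma(x)$ lies in the $(-1)$-eigenspace, with $2x = (x+\sigma x) + (x - \sigma x)$; because $2$ is a unit in $\ZZ_\ell$, the whole order decomposes compatibly.

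First I would record the easy inclusion $\O_+ \subseteq (\O_1+\O_2)\cap\O_{K^+}$: since $\O_+ = \O_i \cap \O_{K^+} \subseteq \O_i \subseteq \O_1 + \O_2$ and $\O_+ \subseteq \O_{K^+}$, this is immediate (and holds at every prime). The content is the reverse inclusion. So let $z \in (\O_1 + \O_2)\cap\O_{K^+}$, and write $z = x_1 + x_2$ with $x_i \in \O_i \otimes \ZZ_\ell$. Applying $\sigma$ and using that each $\O_i$ is $\sigma$-stable, $\sigma(x_i) \in \O_i\otimes\ZZ_\ell$, so $\sigma(z) = \sigma(x_1) + \sigma(x_2)$ with $\sigma(x_i) \in \O_i\otimes\ZZ_\ell$ as well. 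Since $z \in \O_{K^+}$ we have $\sigma(z) = z$, hence
\[
2z = z + \sigma(z) = \bigl(x_1 + \sigma(x_1)\bigr) + \bigl(x_2 + \sigma(x_2)\bigr).
\]
Now $x_i + \sigma(x_i) \in \O_i\otimes\ZZ_\ell$ (closure under addition and $\sigma$) and also lies in $K^+$, so $x_i + \sigma(x_i) \in (\O_i \cap \O_{K^+})\otimes\ZZ_\ell = \O_+\otimes\ZZ_\ell$. Therefore $2z \in \O_+\otimes\ZZ_\ell$, and since $2$ is invertible in $\ZZ_\ell$ (this is exactly where $\ell \neq 2$ is used), $z \in \O_+\otimes\ZZ_\ell$. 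This gives $(\O_1+\O_2)\cap\O_{K^+} \subseteq \O_+$ locally at $\ell$, completing the argument.

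**Main obstacle.** There is no deep obstacle; the only subtlety is bookkeeping between the global statement and the local computation — one must be careful that ``locally at $\ell$'' is interpreted consistently (e.g. as $-\otimes_\ZZ\ZZ_{(\ell)}$ or $-\otimes_\ZZ\ZZ_\ell$) and that intersections commute with this flat base change, which they do since localization and completion at $\ell$ are flat. One should also double-check that $\O_1 + \O_2$ is genuinely an order (closed under multiplication) if that is needed downstream — but for the statement as written only the additive/conjugation structure is used, so this is not required here. The hypothesis $\ell \neq 2$ is essential and cannot be removed: at $\ell = 2$ the idempotent $e$ does not exist and $\O_1 + \O_2$ can acquire a strictly larger real part, which is precisely why the surrounding discussion treats the prime $2$ separately.
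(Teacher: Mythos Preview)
Your proof is correct and follows essentially the same argument as the paper: write $z=x_1+x_2$, apply complex conjugation to get $2z=(x_1+\bar x_1)+(x_2+\bar x_2)\in\O_+$, and invert $2$ locally at $\ell\neq 2$. The paper's version is just the terse two-line form of what you wrote; your additional remarks on flatness of localization and the easy inclusion $\O_+\subseteq(\O_1+\O_2)\cap\O_{K^+}$ are sound but not needed for the core step.
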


\begin{proof}
Let $x\in (\O_1+\O_2)\cap\O_{K^+}$. We have $x=x_1+x_2\in\mathbb R$ with $x_i\in\O_i$. Thus $2x=x+\bar{x}=x_1+\bar{x}_1+x_2+\bar{x}_2\in (\O_1\cap\O_{K^+})+(\O_2\cap\O_{K^+})=O_+$.
\end{proof}

We thus have the inclusions of orders displayed in Figure \ref{fig:realorders}.

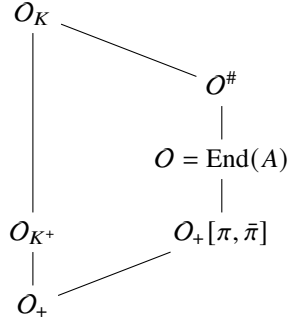
\begin{figure}
\begin{center}
\begin{tikzpicture}[description/.style={fill=white,inner sep=pt}]
\matrix(m) [matrix of math nodes,  row sep=1.3em,
column sep=3em, text height=1.5ex, text depth=0.25ex, ampersand replacement=\&]
{  \O_K  \&   \\
   \& \O^\#\\
   \& \O=\End(A)\\
 \O_{K^+}    \& \O_+[\pi,\bar{\pi}] \\
     \O_+ \& \\};
\path[-]
(m-1-1) edge  (m-2-2)
(m-1-1) edge  (m-4-1)
(m-2-2) edge  (m-3-2)
(m-3-2) edge  (m-4-2)
(m-4-1) edge  (m-5-1)
(m-4-2) edge  (m-5-1)
;
\end{tikzpicture}
\end{center}
\caption{Inclusions between orders in a quartic CM-field and their intersections with the totally real subfield.}
\label{fig:realorders}
\end{figure}

However, the result below, which is a generalization of \cite[Lemma~5.3]{fieker-hofmann-sanon},
shows that there are still exponentially many orders within the resulting bounds for $\End(A)$.

\begin{lemma}
Let $K$ be a quartic CM-field; denote by $K^+$ its totally real subfield.
Let $\O$ be an order in $K$ stable under complex conjugation $\sigma$.
Let $\ell$ be an odd prime.
The number of orders which are stable under complex conjugation, contained in $\O\cap K^++\ell\O$ and containing $\O\cap K^++\ell^2\O$, is greater than $\ell$.
\end{lemma}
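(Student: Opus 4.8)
The plan is to work locally at the odd prime $\ell$, where the situation is governed by the structure of $\O_\ell := \O\otimes\ZZ_\ell$ as a module over $\O^+_\ell := (\O\cap K^+)\otimes\ZZ_\ell$. Write $\Lambda = \O^+_\ell$ and note that $\O_\ell$ is a $\Lambda$-lattice of rank $2$ inside $K\otimes\QQ_\ell$, stable under $\sigma$. Any order $\O'$ with $\Lambda + \ell^2\O_\ell \subseteq \O' \subseteq \Lambda + \ell\O_\ell$ is in particular a $\Lambda$-submodule of $\Lambda + \ell\O_\ell$ containing $\Lambda + \ell^2\O_\ell$; conversely, since all such submodules already contain $\Lambda$ and are sandwiched between two orders, one checks they are automatically closed under multiplication — the product of two elements of $\Lambda + \ell\O_\ell$ lies in $\Lambda + \ell^2\O_\ell + \ell\Lambda\cdot\O_\ell \subseteq \Lambda + \ell\O_\ell$, and more precisely modulo $\Lambda$ it lands in $\ell^2\O_\ell$ up to $\Lambda$, so multiplication is trivial on the relevant quotient. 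Thus counting orders reduces to counting $\sigma$-stable $\Lambda/\ell^2\Lambda$-submodules (equivalently, $\FF_\ell$-subspaces stable under the residual action) of the quotient module $M := (\Lambda + \ell\O_\ell)/(\Lambda + \ell^2\O_\ell)$.

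The key computation is to identify $M$. One has $(\Lambda + \ell\O_\ell)/\Lambda \cong \ell\O_\ell/(\ell\O_\ell \cap \Lambda)$ and $(\Lambda+\ell^2\O_\ell)/\Lambda \cong \ell^2\O_\ell/(\ell^2\O_\ell\cap\Lambda)$, so $M \cong \ell\O_\ell/(\ell^2\O_\ell + (\Lambda\cap\ell\O_\ell))$. Since $\O_\ell$ has rank $2$ over $\Lambda$ and $\ell\ne 2$, the quotient $\O_\ell/\Lambda$-part contributes an $\FF_\ell$-vector space $V := \ell\O_\ell/(\ell^2\O_\ell + \ell(\Lambda\cap\O_\ell)) \cong \O_\ell/(\ell\O_\ell + \Lambda)$, and because $\O_\ell$ is not contained in $K^+$ (it has rank $2$ over $\Lambda$, while $\Lambda$ is the full totally real part), this space is nonzero; one shows $\dim_{\FF_\ell} V \geq 1$, and in the generic situation $\dim_{\FF_\ell} M \geq 2$. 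Then I would invoke the standard fact — the same one underlying \cite[Lemma~5.3]{fieker-hofmann-sanon} — that an $\FF_\ell$-vector space of dimension $\geq 2$ carrying a semisimple (or even arbitrary) linear action of the relevant commutative algebra contains at least $\ell + 1$ distinct stable lines, hence more than $\ell$ stable subspaces. Finally I would check that $\sigma$-stability does not cut the count down below $\ell+1$: because $\ell$ is odd, the $\sigma$-action on $M$ decomposes $M$ into $\pm 1$ eigenspaces, and the eigenspace structure still leaves at least $\ell$ stable subspaces (a one-dimensional $\sigma$-eigenspace can be paired with any stable line in a complementary $2$-dimensional piece, or one works inside a $2$-dimensional eigenspace directly).

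The main obstacle I anticipate is making precise why $\dim_{\FF_\ell} M \geq 2$ in enough generality — i.e. confirming that the quotient $(\Lambda+\ell\O_\ell)/(\Lambda+\ell^2\O_\ell)$ really has $\FF_\ell$-dimension at least $2$ and not merely $1$. This hinges on a careful bookkeeping of the $\Lambda$-module structure of $\O_\ell$: writing $\O_\ell = \Lambda \oplus \Lambda\omega$ locally (after possibly adjusting, since $\Lambda$ may not be a PID we may only get $\O_\ell = \Lambda\theta_1 + \Lambda\theta_2$ with one generator a unit multiple of $1$ up to denominators), the module $M$ picks up one $\FF_\ell$ from the "$\ell\Lambda\omega$ versus $\ell^2\Lambda\omega$" layer and, crucially, a second $\FF_\ell$ from the fact that $\ell\cdot 1 \in \ell\O_\ell$ but the relation forcing $\ell\cdot 1 \in \Lambda + \ell^2\O_\ell$ fails. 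I would handle this by choosing an explicit $\ZZ_\ell$-basis of $\O_\ell$ adapted to $\Lambda$ and to $\sigma$ (using $\ell\ne 2$ to split off the $\sigma$-fixed part cleanly) and exhibiting two independent classes in $M$ directly; once that is in hand, the subspace count and the $\sigma$-equivariance check are routine. A secondary subtlety worth a sentence is verifying the multiplicativity claim above, namely that every $\FF_\ell$-subspace in the allowed range is genuinely an order and not just an additive subgroup — this follows because $(\Lambda + \ell\O_\ell)\cdot(\Lambda + \ell\O_\ell) \subseteq \Lambda + \ell^2\O_\ell$, so all these modules have the same (trivial) multiplication table on the quotient, hence each is a ring.
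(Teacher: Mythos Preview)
There is a genuine gap at the counting step. The ``standard fact'' you invoke---that a two-dimensional $\FF_\ell$-space with an action of the relevant commutative algebra has at least $\ell+1$ stable lines---is false: if $\ell$ is inert in $\O_+$ then $\O_+/\ell\O_+\cong\FF_{\ell^2}$ may act irreducibly on the quotient, and then there are \emph{no} $\O_+$-stable $\FF_\ell$-lines whatsoever. So insisting on the $\Lambda$-module structure, while a natural refinement, blocks the argument rather than completing it. Your multiplicativity justification is also incorrect as written: the inclusion $(\Lambda+\ell\O_\ell)(\Lambda+\ell\O_\ell)\subseteq\Lambda+\ell^2\O_\ell$ fails, since the cross-term $\Lambda\cdot\ell\O_\ell=\ell\O_\ell$ already fills the whole upper lattice (otherwise the quotient would be trivial). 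What \emph{is} true is the weaker statement that any $\Lambda$-submodule in the range is a ring, but that is not the claim you end up using in the final paragraph, where you revert to arbitrary $\FF_\ell$-subspaces.

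The paper's route is different and shorter. It ignores the $\O_+$-action and works only with $\FF_\ell[\sigma]$-submodules of the quotient, which it identifies as a two-dimensional $\FF_\ell$-space (a computation you flag as an obstacle but which is immediate from $[\O:\O_++\ell\O]=\ell^2$). The decisive observation you do not reach is that $x+\sigma(x)\in\O_+$ for every $x\in\O$, so $\sigma$ acts as $-1$ on the entire quotient; in the paper's notation this forces $n_-=2$, and by Maschke ($\ell$ odd) every $\FF_\ell$-subspace is then automatically $\sigma$-stable. The $\ell+1$ lines give the bound directly, with no separate $\sigma$-equivariance check needed. Note that the paper does not verify closure under multiplication by $\O_+$, so the ring-versus-lattice issue you correctly raise in your setup is simply not addressed there.
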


\begin{proof}
These orders correspond to submodules over $\FF_\ell[\{1,\sigma\}]$ of $(\O\cap K^++\ell\O)/(\O\cap K^++\ell^2\O)$.
By Maschke's theorem, $\FF_\ell[\{1,\sigma\}]$ is semisimple: it admits two absolutely irreducible modules, $V_+$ and $V_-$,
each of dimension one over $\FF_\ell$. Thus, the quotient above is isomorphic to $V_+^{n_+}\oplus V_-^{n_-}$ where $n_+$ and $n_-$
are non-negative integers whose sum equals the dimension of the quotient, that is, two.
Since the action of $\sigma$ on $V_-$ is nontrivial and it stabilizes the direct sum, the integer $n_-$ is even.
Thus, one of $n_+$ or $n_-$ equals two and the corresponding module $V^n$ has $\frac{\ell^2-1}{\ell-1}=\ell+1$ submodules.
\end{proof}

We conclude that classical methods for computing the endomorphism rings by ascending the lattice of orders
cannot have subexponential worst-case complexity in the general case.


\subsection{Simple, $p$-rank-$1$ case}
\label{sec:simple-prank1}
Recall that $p$-rank-1 abelian surfaces may be efficiently detected via the following result.

\begin{lemma}[{\cite[Lemma 1]{oconnor-mcguire-naehrig-streng}}]
    A simple abelian surface $A$ defined over $\mathbb F_q$, with $q=p^n$, has $p$-rank $1$ if and only if the following conditions are satisfied:
    \begin{enumerate}
        \item the field $K=\mathbb Q(\pi)$ is a quartic CM-field,
        \item\label{lem:cond2} the prime $p$ splits in $K$ as $p\O_K=\ppp_1\overline{\ppp_1}\ppp_2^e$, where $e\in\{1,2\}$, and
        \item we have $\pi\O_K=\ppp_1^n\ppp_2^{en/2}$, with $e$ as in Condition (\ref{lem:cond2}).
    \end{enumerate}
\end{lemma}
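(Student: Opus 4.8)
\medskip

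The plan is to translate the $p$-rank of $A$ into $p$-adic information about the Frobenius $\pi$ via its Newton polygon, and then to read off the splitting of $p$ in $K$ and the factorisation of $\pi\O_K$. Throughout, fix on $\overline{\QQ}_p$ the valuation $v_p$ normalised by $v_p(q)=1$, and recall the classical fact that the $p$-rank of $A$ equals the multiplicity of the slope $0$ in the Newton polygon of $f_A$ --- equivalently, the number of roots of $f_A$ in $\overline{\QQ}_p$ that are $p$-adic units --- as well as the relation $f_A=m_\pi^{\,k}$, valid because $A$ is simple, where $m_\pi$ is the minimal polynomial of $\pi$ and $k=2\dim A/[\QQ(\pi):\QQ]$.

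First I would show that $p$-rank $1$ forces condition (1). The degree $[\QQ(\pi):\QQ]$ divides $2\dim A=4$ and is not $3$, so it is $1$, $2$ or $4$. If $\QQ(\pi)$ is $\QQ$ or real quadratic then $\pi^2=q$, hence every root of $f_A$ squares to $q$ and has valuation $\tfrac12$, so $A$ is supersingular; if $\QQ(\pi)$ is imaginary quadratic then $f_A=m_\pi^{\,2}$ is the square of a quadratic polynomial, so its Newton polygon is that of $m_\pi$ with all horizontal lengths doubled and has even slope-$0$ multiplicity. In both cases $A$ does not have $p$-rank $1$, so $[\QQ(\pi):\QQ]=4$; and since a Weil $q$-number generating a field which is not totally real generates a CM-field, $K=\QQ(\pi)$ is a quartic CM-field. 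Moreover, by Tate's theorems \cite{tate}, $\QQ\otimes\End(A)$ is a central division algebra over $K$ of dimension $d^2$ with $d\cdot[K:\QQ]=2\dim A=4$, so $d=1$ and $\QQ\otimes\End(A)=K$; in particular $f_A=m_\pi$ is the characteristic polynomial of multiplication by $\pi$ on $K$.

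It remains to extract conditions (2) and (3), and to prove the converse. Since $\QQ\otimes\End(A)=K$, the Newton polygon of $f_A$ decomposes prime-by-prime: writing $p\O_K=\prod_{\ppp\mid p}\ppp^{e_\ppp}$ with residue degrees $f_\ppp$, each $\ppp$ contributes a segment of horizontal length $n_\ppp=e_\ppp f_\ppp=[K_\ppp:\QQ_p]$ and slope $v_\ppp(\pi)/v_\ppp(q)$, while complex conjugation sends the $\ppp$-segment of slope $\lambda$ to the $\overline{\ppp}$-segment of slope $1-\lambda$ because $\pi\overline{\pi}=q$. If $A$ has $p$-rank $1$, the slope-$0$ part has length $1$: there is a unique prime $\ppp_1\mid p$ of slope $0$, with $n_{\ppp_1}=1$ hence $e_{\ppp_1}=f_{\ppp_1}=1$, and $\overline{\ppp_1}\neq\ppp_1$ is the unique slope-$1$ prime, also with $n_{\overline{\ppp_1}}=1$; the remaining primes above $p$ have total local degree $2$ and, not being of slope $0$ or $1$, all have slope $\tfrac12$. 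The delicate point, which I expect to be the main obstacle, is to rule out that these remaining primes form a conjugate pair $\ppp_2\neq\overline{\ppp_2}$ of local degree $1$ each: in that case the local invariant of $\QQ\otimes\End(A)$ at $\ppp_2$ would be $\tfrac12\cdot[K_{\ppp_2}:\QQ_p]=\tfrac12\not\equiv 0\pmod{1}$, so $\QQ\otimes\End(A)$ would properly contain $K$, contradicting $\QQ\otimes\End(A)=K$. Hence there is a single further prime $\ppp_2\mid p$, it is self-conjugate with $n_{\ppp_2}=2$, so $(e_{\ppp_2},f_{\ppp_2})\in\{(1,2),(2,1)\}$; putting $e:=e_{\ppp_2}\in\{1,2\}$ this is $p\O_K=\ppp_1\overline{\ppp_1}\ppp_2^{\,e}$, which is (2). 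Reading off each $v_\ppp(\pi)=\bigl(v_\ppp(\pi)/v_\ppp(q)\bigr)v_\ppp(q)$, and labelling so that $v_{\ppp_1}(\pi)=v_{\ppp_1}(q)=n$ (equivalently $v_{\overline{\ppp_1}}(\pi)=0$), gives $v_{\ppp_2}(\pi)=\tfrac12 v_{\ppp_2}(q)=\tfrac{en}{2}$, whence $\pi\O_K=\ppp_1^{\,n}\ppp_2^{\,en/2}$, which is (3) (here $en/2\in\ZZ$ automatically, being $n$ if $e=2$ and forcing $n$ even if $e=1$). Conversely, assuming (1)--(3), Honda--Tate again gives $\QQ\otimes\End(A)=K$ since $K$ is quartic, so $f_A$ is the characteristic polynomial of $\pi$ on $K$ and its Newton polygon, computed from (2) and (3) exactly as above, has slopes $0,\tfrac12,\tfrac12,1$; hence $A$ has $p$-rank $1$. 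Beyond the structural exclusion of the totally split configuration, the argument is elementary valuation bookkeeping together with the standard classification of Weil numbers and Tate's description of $\QQ\otimes\End(A)$ and its local invariants.
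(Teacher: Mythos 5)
The paper does not actually prove this lemma --- it is imported verbatim from \cite[Lemma~1]{oconnor-mcguire-naehrig-streng} --- so there is no internal proof to compare against; your argument via Newton polygons and Tate's description of $\QQ\otimes\End(A)$ and its local invariants is the standard one and is correct, including the key step of using $d=1$ (forced by $d\,[K:\QQ]=4$) to exclude a split conjugate pair $\ppp_2\neq\overline{\ppp_2}$ of slopes in $(0,1)$. One small presentational point: you assert that the remaining primes ``all have slope $\tfrac12$'' before excluding the split pair, and your invariant computation $\tfrac12\cdot[K_{\ppp_2}:\QQ_p]$ presupposes that slope; it would be cleaner to note that a degree-one prime of slope $j/n\in(0,1)$ gives invariant $j/n\notin\ZZ$ for \emph{any} such $j$, so the contradiction does not depend on first knowing the slope is $\tfrac12$.
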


In particular, endomorphism rings of $p$-rank-$1$ surfaces are
specific orders among those of ordinary surfaces.
The techniques of Section~\ref{sec:ascending} therefore apply equally
to the case of $p$-rank-$1$ surfaces.

To compute the endomorphism rings, these techniques may be coupled with
the method of Eisenträger and Lauter \cite{eisentrager-lauter} or even of Bisson \cite{end-g2}.
Indeed, by the theory of Shimura and Taniyama \cite{shimura-taniyama},
isogenies between $p$-rank-$1$ surfaces correspond to ideals of orders of the endomorphism algebra.
Since this algebra is of the same type as those of ordinary surfaces,
the free action of \cite[Section~2]{end-g2},
the endomorphism ring testing routine of \cite[Section~5]{end-g2},
the resulting method \cite[Algorithm~6.2]{end-g2},
the main result \cite[Theorem~7.1]{end-g2} and its proof
all apply without and modification to abelian surfaces of $p$-rank-$1$.

\begin{example}
Consider the Jacobian variety $A$ of the genus-two curve
\[
y^2 = 23535 x^6 + 6448 x^5 + 20387 x^4 + 3811 x^3 + 11376 x^2 + 11282 x + 21340
\]
defined over the finite field with $36877$ elements.
It is absolutely simple, has $p$-rank~1, and satisfies $[\O_K:\ZZ[\pi,\overline\pi]]=431$ with the $431$-torsion of $A$ being defined over an extension of degree $5003910$.
In the ring of integers of $K$, we have $3=\aaa\overline\aaa$ where $\aaa$ is a prime ideal of norm $9$.
The element $(\aaa,3)$ is of order $736$ in the polarized class group $\CCC(\O_K)$ but not in $\CCC(\ZZ[\pi,\overline\pi])$.
We compute the sequence of $736$ isogenies of type $(3,3)$ corresponding to the ideal $\aaa$ and land on the Jacobian variety of a genus-two curve
whose invariants are different to that of $A$. Hence we deduce $\End(A)\subsetneq\O_K$ and therefore $\End(A)=\ZZ[\pi,\overline\pi]$.
The AVIsogenies Magma package \cite{avisogenies} performs this computation in just about three minutes overall on a single Intel i5-8365U CPU core.
\end{example}


\section{Non-simple abelian surfaces}
\label{sec:non-simple}

The endomorphism algebra of non-simple abelian surfaces has dimension greater than $4$.
Since $\QQ(\pi,\bar{\pi})$ has only dimension $1$ or $2$, the strategy consisting in ascending the lattice of orders described previously does not apply, which is why we rely on the more explicit results below.


\subsection{Using coprime isogenies}
\label{sub:useisogeny}

Suppose we want to compute a basis of $\End(A)$. In this section, we prove that this problem reduces to computing the endomorphism rings of two other abelian surfaces $B$ and $C$ connected to $A$ by isogenies of coprime degrees. One could thus compute random isogenies from $A$ with codomain two abelian varieties $B$ and $C$ of which the endomorphism rings are known or simpler to compute.

\begin{proposition}\label{prop:two_paths_for_endring}
Suppose we are given a good representation of isogenies $\varphi : A \rightarrow B$ and $\psi : A \rightarrow C$ of coprime degrees, of their duals, and of a basis of $\End(B)$ and $\End(C)$. Then, one can compute a good representation of a basis of $\End(A)$ in polynomial time in the length of the input.
\end{proposition}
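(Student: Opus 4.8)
The plan is to transport endomorphisms of $B$ and $C$ back to $A$ via the isogenies $\varphi$ and $\psi$, assemble the resulting collection of (a priori only rational) endomorphisms of $A$, and then invoke Proposition~\ref{prop:augment-subring} to saturate this collection to a genuine basis of $\End(A)$. The coprimality of $\deg\varphi$ and $\deg\psi$ is what guarantees that the collection we assemble has full rank in $\End(A)\otimes\QQ$.

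First I would observe that if $\beta\in\End(B)$, then $\widehat\varphi\circ\beta\circ\varphi$ is an endomorphism of $A$ (where $\widehat\varphi$ denotes the dual isogeny), and that $\frac{1}{\deg\varphi}\widehat\varphi\circ\beta\circ\varphi$ is an element of $\End(A)\otimes\QQ$ which, as $\beta$ ranges over a basis of $\End(B)$, spans the $\QQ$-subalgebra $\varphi^*(\End(B)\otimes\QQ)=(\End(A)\otimes\QQ)\cap\varphi^*(\End(B)\otimes\QQ)$; since $\varphi$ is an isogeny, this is all of $\End(A)\otimes\QQ$. In particular the integral endomorphisms $\widehat\varphi\circ\beta_i\circ\varphi$ already form a full-rank collection in $\End(A)$, and similarly for the pullbacks through $\psi$. (Using both $\varphi$ and $\psi$ is what makes the saturation efficient: the index $[\End(A):\spanspan_\ZZ(\widehat\varphi\circ\beta_i\circ\varphi)]$ can a priori be divisible by primes dividing $\deg\varphi$, but any such prime is coprime to $\deg\psi$, so it does not divide the analogous index for the pullbacks through $\psi$; hence the union of the two pulled-back collections generates a lattice whose index in $\End(A)$ is coprime to $\deg\varphi\cdot\deg\psi$, which bounds the saturation work independently of how the two lattices sit inside $\End(A)$ away from those bad primes.)

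Next I would check that these pulled-back endomorphisms come with a good representation computable in polynomial time. Given a good representation $(r,r^\dagger,D)$ of $\varphi$ and a good representation of $\beta\in\End(B)$, one composes efficient representations to obtain an efficient representation of $\gamma:=\widehat\varphi\circ\beta\circ\varphi$ (the dual $\widehat\varphi$ has an efficient representation since $\varphi^\dagger=\lambda_A^{-1}\circ\widehat\varphi\circ\lambda_B$ does and the polarizations are efficiently computable on a principally polarized surface); the Rosati dual $\gamma^\dagger=\varphi^\dagger\circ\beta^\dagger\circ(\varphi^\dagger)^\vee$-type expression is handled the same way; and the trace bound $\langle\gamma,\gamma\rangle$ follows from submultiplicativity of the quadratic form under composition together with the bounds $D$ and the trace bound in the good representation of $\beta$, so an explicit integer $D'$ bounding $\langle\gamma,\gamma\rangle$ is available. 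Then I feed the whole collection — the pullbacks through $\varphi$ of a basis of $\End(B)$, together with the pullbacks through $\psi$ of a basis of $\End(C)$ — into the algorithm of Proposition~\ref{prop:augment-subring}, which outputs a good representation of a basis of $\spanspan_\QQ(\text{collection})\cap\End(A)=\End(A)$ in polynomial time in the input length.

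The main obstacle is bounding the cost of the saturation step (Proposition~\ref{prop:augment-subring}) and, in particular, making sure Step~\ref{augment-step-1}--\ref{augment-step-2} there runs in polynomial time: as noted after that proposition, enumerating $S/\ell S$ and testing $\beta(A[\ell])=0$ naively is exponential in $\log\ell$. Here the coprimality hypothesis is exactly what saves us: the relevant primes $\ell$ dividing $[\End(A):\spanspan_\ZZ(\text{collection})]^2 \mid \det(G)$ are, after combining the two pulled-back lattices, confined to primes dividing $\gcd(\deg\varphi,\deg\psi)=1$ — i.e.\ none — apart from those already dividing the indices $[\End(B):\spanspan_\ZZ(\beta_i)]$ and $[\End(C):\spanspan_\ZZ(\cdot)]$, which are trivial since we were handed actual bases. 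So in fact the combined collection already generates $\End(A)$ on the nose, the $\det(G)$ has no square factor forcing further work, and Proposition~\ref{prop:augment-subring} terminates immediately after computing the Gram matrix and an integral basis via Lemma~\ref{lemma:scalarproductofisogenies}. Verifying carefully that the index really is coprime to every prime — that is, that $\widehat\varphi\circ(\cdot)\circ\varphi$ and $\widehat\psi\circ(\cdot)\circ\psi$ together already surject onto $\End(A)$ rather than onto a proper finite-index sublattice — is the delicate point, and it rests on the standard fact that for an isogeny $\varphi$ of degree $m$ one has $\varphi^*\End(B)\supseteq m\End(A)$ and $\varphi_*$(pullback)$\,\subseteq\End(A)$ with the two indices dividing powers of $m$, so that intersecting the images for two coprime $m$'s pins down $\End(A)$ exactly.
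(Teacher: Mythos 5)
Your proposal is correct and follows essentially the same route as the paper: pull back the given bases through $\varphi$ and $\psi$, observe that the resulting lattices $\widehat\varphi\circ\End(B)\circ\varphi$ and $\widehat\psi\circ\End(C)\circ\psi$ each contain $\deg(\cdot)^2\End(A)$ so their indices in $\End(A)$ are coprime and their sum is all of $\End(A)$, then compute the Gram matrix via Lemma~\ref{lemma:scalarproductofisogenies} and extract a basis. The only cosmetic difference is that you route the argument through Proposition~\ref{prop:augment-subring} before noting the saturation step is vacuous, whereas the paper concludes directly without invoking it (and note it is the \emph{sum}, not the intersection, of the two lattices that pins down $\End(A)$).
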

\begin{proof}
 Recall that, for any isogeny $\psi : A \to B$, there is unique isogeny $\hat\psi : B \to A$ such that $\psi \hat\psi=[\deg(\psi))]$.

Let $(\eta_i)_i \subset \End(B)$ and $(\nu_i)_i \subset \End(C)$ be the provided bases.
Let $\beta_i = \widehat\varphi \circ \eta_i \circ \varphi$, and $\gamma_i = \widehat\psi \circ \nu_i \circ \psi$.
The lattices in $\End(A)$ generated by $(\beta_i)_i$ and $(\gamma_i)_i$ are $\Lambda_B = \widehat\varphi \circ \End(B) \circ\varphi$ and $\Lambda_C = \widehat\psi \circ \End(C) \circ\psi$ respectively. We have $\deg(\varphi)^2\End(A) \subset \Lambda_B \subset \End(A)$, and $\deg(\psi)^2\End(A) \subset \Lambda_C \subset \End(A)$. We deduce that $[\End(A) : \Lambda_B]$ and $[\End(A) : \Lambda_C]$ are coprime, since $\deg(\varphi)$ and $\deg(\psi)$ are. This implies $\Lambda_B + \Lambda_C = \End(A)$, hence $\End(A)$ is generated by the union of $(\beta_i)_i$ and $(\gamma_i)_i$. From Lemma~\ref{lemma:scalarproductofisogenies}, we can compute the Gram matrix of this generating set, from which we deduce a basis.
\end{proof}

Sometimes we will not have such coprime degree isogenies at our disposal, for instance in the $p$-rank 0 and $a$-number 1 case.
Nevertheless, in the case of non-simple varieties, we may use the following well-known results.

\begin{lemma}\label{lem:comp}
Let $s_1:A\rightarrow B$ and $s_2:A\rightarrow C$ be isogenies. There exists a third isogeny $s_3:C\rightarrow B$ such that $s_1=s_3s_2$ if and only if $\ker(s_2)\subset\ker(s_1)$.
\end{lemma}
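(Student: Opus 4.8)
The forward implication is immediate and I would dispose of it first: if $s_1 = s_3 \circ s_2$ then for any point (or, more precisely, any scheme-valued point) $x$ of $\ker(s_2)$ we have $s_1(x) = s_3(s_2(x)) = s_3(0) = 0$, so $\ker(s_2) \subset \ker(s_1)$ as subgroup schemes of $A$.

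For the converse, the plan is to invoke the universal property of the quotient isogeny. An isogeny $s_2 : A \to C$ realizes $C$ as the quotient $A / \ker(s_2)$ in the category of abelian varieties (this holds even in the inseparable case, taking $\ker(s_2)$ as a finite group scheme; see Mumford's \emph{Abelian Varieties}). Hence any homomorphism of abelian varieties out of $A$ that is trivial on $\ker(s_2)$ factors uniquely through $s_2$. Since by hypothesis $\ker(s_2) \subset \ker(s_1)$, the homomorphism $s_1 : A \to B$ is trivial on $\ker(s_2)$, and we obtain a unique homomorphism $s_3 : C \to B$ with $s_1 = s_3 \circ s_2$.

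It then remains to check that $s_3$ is itself an isogeny, i.e.\ surjective with finite kernel. Surjectivity is clear: $s_1 = s_3 \circ s_2$ is surjective (isogenies are), so $s_3$ is surjective. For the kernel, a diagram chase shows $\ker(s_3) \cong \ker(s_1)/\ker(s_2)$, which is finite since $\ker(s_1)$ is; alternatively, comparing degrees gives $\deg(s_3) = \deg(s_1)/\deg(s_2)$, a positive integer. Either way $s_3$ is an isogeny.

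The only genuinely delicate point is the categorical quotient statement that guarantees the \emph{morphism} $s_3$ (as opposed to a mere rational map or a map on points) exists and is a homomorphism; once that is granted, everything else is a routine verification. I would therefore keep the write-up short, citing the standard quotient construction for abelian varieties and then checking surjectivity and finiteness of $\ker(s_3)$ in one line each.
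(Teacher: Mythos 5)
Your proof is correct. The paper actually states this lemma as a ``well-known result'' and gives no proof at all, so there is nothing to compare against; your argument --- forward direction by evaluation on scheme-valued points of the kernel, converse by the universal property of the quotient $A/\ker(s_2)$ (valid for finite subgroup schemes, hence also in the inseparable case), followed by the routine check that the induced map $s_3$ is surjective with finite kernel --- is exactly the standard justification one would supply.
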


\begin{corollary}\label{cor:comp}
Let $s_1:A\rightarrow B$ and $s_3:C\rightarrow B$ be isogenies. There exists a third isogeny $s_2:A\rightarrow C$ such that $s_1=s_3s_2$ if and only if $\ker(\widehat{s}_3)\subset\ker(\widehat{s}_1)$.
\end{corollary}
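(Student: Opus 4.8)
The statement to prove is Corollary~\ref{cor:comp}, which is a routine consequence of Lemma~\ref{lem:comp} obtained by passing to dual isogenies. The plan is to apply Lemma~\ref{lem:comp} not to $s_1$ and $s_3$ directly, but to their dual isogenies $\widehat{s}_1 : B \to A$ and $\widehat{s}_3 : B \to C$. These are two isogenies sharing the same source $B$, exactly the configuration required by the lemma.

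First I would invoke Lemma~\ref{lem:comp} with the roles played as follows: take the first isogeny of the lemma to be $\widehat{s}_1 : B \to A$ and the second to be $\widehat{s}_3 : B \to C$. The lemma then asserts that there exists an isogeny $t : C \to A$ with $\widehat{s}_1 = t \, \widehat{s}_3$ if and only if $\ker(\widehat{s}_3) \subset \ker(\widehat{s}_1)$, which is precisely the hypothesis. So under that hypothesis we obtain such a $t$. Next I would dualize the relation $\widehat{s}_1 = t\,\widehat{s}_3$: taking duals reverses composition, so $s_1 = \widehat{\widehat{s}_1} = \widehat{\widehat{s}_3}\,\widehat{t} = s_3\,\widehat{t}$, using that double dual is the identity (up to the canonical identification $\widehat{\widehat{A}} \simeq A$, and similarly for $B$ and $C$). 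Setting $s_2 := \widehat{t} : A \to C$ gives an isogeny with $s_1 = s_3 s_2$, which is the desired conclusion in one direction. Conversely, if such an $s_2 : A \to C$ exists with $s_1 = s_3 s_2$, then dualizing yields $\widehat{s}_1 = \widehat{s}_2\,\widehat{s}_3$, so $\widehat{s}_1$ factors through $\widehat{s}_3$; applying the easy direction of Lemma~\ref{lem:comp} to $\widehat{s}_1$ and $\widehat{s}_3$ then forces $\ker(\widehat{s}_3) \subset \ker(\widehat{s}_1)$.

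There is essentially no hard step here; the only point requiring a modicum of care is bookkeeping of the duality functor — that it is contravariant, that $\widehat{\widehat{\cdot}}$ recovers the original isogeny under the canonical biduality identification, and that dualizing an equality of composites is legitimate. One should also note that $\widehat{s}_1$ and $\widehat{s}_3$ are genuine isogenies (dual isogenies of isogenies are isogenies), so Lemma~\ref{lem:comp} applies. No heuristics, no finiteness subtleties beyond what is already packaged in Lemma~\ref{lem:comp}, enter.
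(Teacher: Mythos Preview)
The paper does not give an explicit proof of this corollary; it is stated immediately after Lemma~\ref{lem:comp} and left to the reader as the obvious dualization. Your argument is exactly the intended one: apply Lemma~\ref{lem:comp} to $\widehat{s}_1$ and $\widehat{s}_3$ (which share source $B$), then dualize the resulting factorization. There is nothing to correct.
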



\subsection{Structure of the endomorphism algebra}

Recall that an abelian variety $A$ is non simple if and only if it is isogenous to a product of elliptic curves over the algebraic closure.
The non-simplicity of $A$ can be detected by computing the characteristic polynomial $f_A$ of its Frobenius endomorphism \cite[Theorem~6]{howe-zhu} and so the isogeny classes of the elliptic factors follow.

There are three possibilities to consider for the $p$-rank: $0$, $1$, and $2$.
The $p$-rank can be computed as described in Section \ref{sec:simple} via \cite[Theorem~2.9]{maisner-nart}.

\subsubsection{$p$-rank-$0$ case}
\label{sec:non-simple-prank0}
Recall the following result from Oort.

\begin{proposition}[{\cite[Theorem 4.2]{Oort74}}]
For any given three supersingular elliptic curves $E,E_1,E_2$
defined over an algebraically closed field,
there is an isogeny $E^2\sim E_1\times E_2$.
\end{proposition}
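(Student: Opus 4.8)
My plan is to deduce the statement from the single classical fact that, over an algebraically closed field of characteristic $p$, any two supersingular elliptic curves are isogenous. Granting this, I apply it twice to obtain isogenies $\varphi_1 \colon E \to E_1$ and $\varphi_2 \colon E \to E_2$, and then $\varphi_1 \times \varphi_2 \colon E^2 \to E_1 \times E_2$ is the desired isogeny. So the whole content lies in this pairwise statement.

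To prove that two supersingular elliptic curves $E$, $E'$ over an algebraically closed field $k$ of characteristic $p$ are isogenous, I first reduce to a finite field. Every supersingular $j$-invariant is algebraic over $\FF_p$, and in fact lies in $\FF_{p^2}$; since elliptic curves over an algebraically closed field with the same $j$-invariant are isomorphic, $E$ and $E'$ are $k$-isomorphic to the base changes of curves $E_0$, $E_0'$ defined over $\FF_{p^2}$ (still supersingular, as supersingularity depends only on the $j$-invariant). Any isogeny $E_0 \to E_0'$ defined over $\overline{\FF}_p$ base-changes to an isogeny $E \to E'$ over $k$, so it suffices to produce such an isogeny over $\overline{\FF}_p$.

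Over $\FF_{p^2}$, let $\pi$ be the Frobenius endomorphism of $E_0$, with eigenvalues $\alpha$, $\beta$ satisfying $\alpha\beta = p^2$. Supersingularity forces $\alpha/\beta$ to be a root of unity (equivalently, some power of $\pi$ equals multiplication by a rational integer); if $N$ is its order, then $\alpha^{2N} = (\alpha\beta)^N = p^{2N}$, so over $\FF_{p^{2N}}$ the eigenvalues $\alpha^N$, $\beta^N$ of $\pi^N$ are each equal to $\pm p^N$, and comparing their product with $p^{2N}$ shows they carry the same sign. Replacing $E_0$ by a quadratic twist over $\FF_{p^{2N}}$ if necessary (which does not change the isogeny class over $\overline{\FF}_p$, since a curve and its quadratic twist become isomorphic over the algebraic closure), I may assume both eigenvalues equal $+p^N$; that is, the Frobenius of the twisted curve over $\FF_{p^{2N}}$ is the scalar $[p^N]$, with characteristic polynomial $(t - p^N)^2$. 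Choosing a single $N$ that works for both $E_0$ and $E_0'$ and twisting both, the two resulting curves over $\FF_{p^{2N}}$ share the same characteristic polynomial of Frobenius, hence are isogenous over $\FF_{p^{2N}}$ by Tate's isogeny theorem \cite{tate}; this yields an isogeny over $\overline{\FF}_p$, and undoing the twists gives $E_0 \sim E_0'$.

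I do not expect a genuine obstacle here: the argument only assembles standard ingredients (supersingular $j$-invariants lie in $\FF_{p^2}$; Tate's isogeny criterion over finite fields; quadratic twists become trivial over the algebraic closure). The one point needing care is the bookkeeping, namely keeping track of how the successive base extensions and quadratic twists interact with the isogeny class; this is consistent with the statement being one that is cited rather than reproved in full.
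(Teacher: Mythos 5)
Your proposal is correct. The paper gives no proof of this proposition at all --- it is quoted verbatim as \cite[Theorem 4.2]{Oort74} --- so you have in effect supplied a self-contained argument where the paper relies on a citation. Your reduction is the right one: the statement as quoted is purely an isogeny statement, so it follows immediately from the single fact that any two supersingular elliptic curves over an algebraically closed field of characteristic $p$ are isogenous, via the product isogeny $\varphi_1\times\varphi_2$. (The deeper content in the literature around this result, namely that $E_1\times E_2$ is actually \emph{isomorphic} to $E^2$, is not claimed here, so you do not need it.) Your proof of the pairwise statement is the standard one and checks out: descent of the $j$-invariant to $\FF_{p^2}$, the observation that $\alpha/\beta$ is a root of unity so that $\pi^N=[\pm p^N]$ over $\FF_{p^{2N}}$, sign adjustment by a quadratic twist (harmless over $\overline{\FF}_p$), a common choice of $N$ by taking a common multiple, and Tate's isogeny theorem to match the two characteristic polynomials $(t-p^N)^2$. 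The only assertions left implicit --- that supersingular $j$-invariants lie in $\FF_{p^2}$ and that $\alpha/\beta$ is a root of unity exactly in the supersingular case (via Kronecker's theorem applied to an algebraic integer all of whose conjugates have absolute value one) --- are classical and reasonable to cite, which matches the spirit of the paper's own treatment of this proposition as a known result.
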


Fix $E$ a supersingular elliptic curve.
Following Oort \cite{Oort75}, for every $(i,j)\in\overline{\FF}_p^2$,
we denote by $A_{i,j}$ the abelian surface defined over $\overline{\FF}_p$ through the following diagram: 
$$0\rightarrow \alpha_p\xrightarrow{(i,j)} E\times E\rightarrow A_{i,j}\rightarrow 0.$$


In the $p$-rank-$0$ case, there are two possibilities for the $a$-number: $1$ and $2$.

\begin{proposition}[{\cite[Proposition 11.1]{howe-nart-ritzenthaler}, restating results from \cite{Oort75}}]
Let $A$ be an abelian surface of $p$-rank $0$.
We have $a(A) = 2$ if and only if $A\simeq E\times E$.
We have $a(A) = 1$ if and only if $A\simeq A_{ij}$ for some $[i : j] \in \PP^1(\overline{\FF}_p)\setminus\PP^1(\FF_{p^2})$.
Furthermore, if $a(A) = 1$ then $a(A/\alpha_p) = 2$.
\end{proposition}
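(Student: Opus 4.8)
The plan is to hang the argument on the elementary fact that a $p$-rank-$0$ abelian surface $A$ satisfies $a(A)\in\{1,2\}$: its $p$-divisible group is local--local of height $4$, so $A$ contains a copy of $\alpha_p$ (whence $a(A)\ge 1$), while $a(A)\le g=2$ always. This reduces the proposition to three claims, and I would dispose of the superspecial case first, since it feeds into the other two.

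If $a(A)=2$, then by definition $A$ is superspecial, i.e.\ $A[p^\infty]\cong E[p^\infty]^{2}$. The classical structure theorem of Deligne, Ogus and Shioda --- which is precisely what \cite{Oort75} makes explicit in dimension two --- then gives $A\simeq E\times E$ at the level of abelian varieties, for any fixed supersingular $E$ (note that the relevant statement here is the unpolarised one). The converse is the additivity $a(E\times E)=a(E)+a(E)=2$, coming from $\Hom(\alpha_p,E_1\times E_2)=\Hom(\alpha_p,E_1)\oplus\Hom(\alpha_p,E_2)$.

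Now suppose $a(A)=1$, so that $A$ contains a \emph{unique} copy of $\alpha_p$, namely $A[F]\cap A[V]$. I would first prove the ``furthermore'' clause $a(A/\alpha_p)=2$ by a Dieudonné-module computation: writing $M$ for the covariant Dieudonné module of $A[p^\infty]$, the subgroup $\alpha_p$ corresponds to the line $\ker F\cap\ker V\subset M/pM$, and the isogeny $A\to A/\alpha_p$ enlarges $M$ by adjoining $\tfrac1p$ of a lift of that line, which makes $FM'+VM'$ drop and thus $a=\dim_k M'/(FM'+VM')$ rise to $2$. Since $a(A/\alpha_p)=2$, the first case gives $A/\alpha_p\simeq E^{2}$; dualising the degree-$p$ isogeny $A\to E^{2}$, using $\alpha_p^{\vee}\cong\alpha_p$, the Weil-pairing identification $\ker\widehat\phi\cong(\ker\phi)^{\vee}$, and $\widehat{E^2}\cong E^2$, produces an isogeny $E^{2}\to\widehat A$ whose kernel is an $\alpha_p\subset E^{2}$, and the principal polarisation identifies $\widehat A$ with $A$; hence $A\simeq A_{i,j}$ for some $[i:j]\in\PP^1(\overline{\FF}_p)$. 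It remains to compute $a(A_{i,j})$ for every $[i:j]$, which is again a Dieudonné computation. Choosing $M_E=We_1\oplus We_2$ with $F$ and $V$ both acting by $e_1\mapsto e_2$, $e_2\mapsto pe_1$, one gets $M(A_{i,j})=M(E^2)+W\cdot\tfrac1p(ie_2+jf_2)$; since $F$ is $\sigma$-semilinear and $V$ is $\sigma^{-1}$-semilinear, $F$ and $V$ send $\tfrac1p(ie_2+jf_2)$ to $i^{\sigma}e_1+j^{\sigma}f_1$ and $i^{\sigma^{-1}}e_1+j^{\sigma^{-1}}f_1$ respectively, and these two vectors become proportional modulo $FM(E^2)+VM(E^2)$ exactly when $[i:j]$ is fixed by $\sigma^{2}$, i.e.\ $[i:j]\in\PP^1(\FF_{p^2})$. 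A dimension count then gives $a(A_{i,j})=2$ if $[i:j]\in\PP^1(\FF_{p^2})$ (consistently with $A_{i,j}\simeq E^2$ there) and $a(A_{i,j})=1$ otherwise. Combining everything yields $a(A)=1$ if and only if $A\simeq A_{i,j}$ with $[i:j]\in\PP^1(\overline{\FF}_p)\setminus\PP^1(\FF_{p^2})$.

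The main obstacle is precisely this last computation: one must keep careful track of the semilinear structures so that the Frobenius twist ($\sigma$) carried by $F$ and the inverse twist ($\sigma^{-1}$) carried by $V$ combine into a $\sigma^{2}$-condition --- this is what produces $\FF_{p^2}$ rather than $\FF_p$ in the statement --- and one must be equally careful about which $\FF_{p^2}$-structure on the line $\ker F\cap\ker V\subset M_E/pM_E$ the coordinates $[i:j]$ are read against. Everything else --- the $a$-number dichotomy, additivity of $a$ over products, self-duality of $\alpha_p$, the kernel of a dual isogeny, and the superspecial $\Rightarrow$ product theorem --- is standard and can be quoted.
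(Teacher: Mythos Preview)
The paper does not provide its own proof of this proposition; it is quoted from \cite{howe-nart-ritzenthaler} (itself restating \cite{Oort75}), so there is no in-paper argument to compare against. Your sketch is correct and follows exactly the classical line taken in the cited sources: the $a$-number dichotomy for $p$-rank-$0$ surfaces, the superspecial structure theorem for $a=2$, the Dieudonné-module verification that $a(A/\alpha_p)=2$, dualising to realise $A$ as some $A_{i,j}$, and the semilinear computation that singles out the $\FF_{p^2}$ locus.

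One small remark: your passage from $A/\alpha_p\simeq E^2$ to $A\simeq A_{i,j}$ uses the principal polarisation to identify $A$ with $\widehat A$. That is legitimate under the paper's standing convention that all surfaces are principally polarised, but note that Oort's result holds without this hypothesis; in the unpolarised setting one argues directly on Dieudonné lattices $M$ with $pM(E^2)\subset M\subset M(E^2)$, which yields the same $\PP^1$ parametrisation without invoking duality.
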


As a consequence, in the $p$-rank $0$ case, the endomorphism algebra is isomorphic to $\mathcal{M}_2(\mathcal{B}_{p,\infty})$. In particular, in the $a$-number $2$ case we can solve Problem \ref{prob:abstract} since we have $\End(E\times E)\simeq\mathcal{M}_2(R)$ where $R$ is any maximal order in the quaternion algebra $\mathcal{B}_{p,\infty}$. The computational difficulty here is to find an explicit isomorphism or isogeny from $A$ to $E\times E$. We will address this problem in Section \ref{sec:ellfactor}.

For the remainder of this section, we address the problem of explicitly computing endomorphisms in the $a$-number $1$ case.
Let $A=A_{i,j}\sim (E\times E)/(i,j)(\alpha_p)$ and denote by $\varphi:E\times E\rightarrow A$ the corresponding isogeny.
We have
$$
p\cdot\End(E\times E)\subset\End(A)\subset\frac{1}{p}\End(E\times E)
$$
where $\beta\in\End(E\times E)$ goes to $\frac{1}{p}\varphi\circ\beta\circ\varphi^{-1}$
and $\gamma\in\End(A)$ goes to $\frac{1}{p}\varphi^{-1}\circ\gamma\circ\varphi$.

Given $\begin{pmatrix}x & y \\ z & w\end{pmatrix}\in\End(E\times E)$ we want to determine whether it is of the form
$\frac{1}{p}\varphi^{-1}\circ\gamma\circ\varphi$ for some $\gamma\in\End(A)$.
To this extent, we use the Lemma \ref{lem:comp} together with the following result.

\begin{lemma}
If $x$ is an endomorphism, denote by $\widehat x$ its dual and let $|x|=x\circ\widehat x$ and $\tr(x)=x+\widehat x$.
Given $x$, $y$, $w$ and $z$ four elements of $\End(E)$, we have
\[
\begin{pmatrix}
x & y \\
z & w
\end{pmatrix}
\begin{pmatrix}
\widehat{x}|w|-\widehat{z}w\widehat{y} & \widehat{z}|y|-\widehat{x}y\widehat{w}\\
\widehat{y}|z|-\widehat{w}z\widehat{x}& \widehat{w}|x|-\widehat{y}x\widehat{z}
\end{pmatrix}
=
\big(|x||w|+|y||z|-\tr(x\widehat{z}w\widehat{y})\big)
\begin{pmatrix}
1 & 0\\
0 & 1
\end{pmatrix}.
\]
\end{lemma}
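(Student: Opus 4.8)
The statement is an identity in the (non-commutative, but in a controlled way) ring $\End(E)$: we must verify that the product of the two $2\times 2$ matrices equals a scalar matrix with the displayed scalar. The natural approach is to compute all four entries of the matrix product directly and simplify each one using the single structural fact available in $\End(E)$: for any $\alpha\in\End(E)$ one has $\alpha+\widehat\alpha=\tr(\alpha)\in\ZZ$ and $\alpha\widehat\alpha=\widehat\alpha\alpha=|\alpha|=\deg(\alpha)\in\ZZ$, both central; more generally, for any $\alpha,\beta$, the element $\alpha\widehat\beta+\beta\widehat\alpha$ is the integer $\tr(\alpha\widehat\beta)$ and is therefore central. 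These relations, together with $\widehat{\alpha\beta}=\widehat\beta\,\widehat\alpha$, are the only algebra needed; the point is that $\End(E)$ is an order in a quaternion algebra, so it satisfies the Cayley--Hamilton–type relation $\alpha^2-\tr(\alpha)\alpha+|\alpha|=0$ and any two elements generate a ring in which "reversion" $\alpha\mapsto\widehat\alpha$ behaves like quaternion conjugation.

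Concretely, I would first rewrite the right-hand factor as the "adjugate-type" matrix and recognize the target scalar $N:=|x||w|+|y||z|-\tr(x\widehat z w\widehat y)$ as (a variant of) the reduced norm of $\begin{pmatrix}x&y\\z&w\end{pmatrix}$ viewed inside $M_2(\End(E))$, which is itself an order in a rank-$8$ algebra over $\QQ$; this identifies the second matrix, up to the scalar $N$, with the matrix inverse of the first, explaining a priori why a scalar must come out. Then I would carry out the $(1,1)$ entry: it is
$x(\widehat x|w|-\widehat z w\widehat y)+y(\widehat y|z|-\widehat w z\widehat x)=|x||w|+|y||z|-x\widehat z w\widehat y-y\widehat w z\widehat x$,
and since $x\widehat z w\widehat y+y\widehat w z\widehat x=x\widehat z w\widehat y+\widehat{x\widehat z w\widehat y}=\tr(x\widehat z w\widehat y)$ (using $\widehat{x\widehat z w \widehat y}=y\widehat w z\widehat x$), this entry equals $N$ exactly. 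The $(2,2)$ entry is handled symmetrically (swap $x\leftrightarrow w$, $y\leftrightarrow z$, noting $\tr(w\widehat y x\widehat z)=\tr(x\widehat z w\widehat y)$ by invariance of trace under reversion and cyclic-plus-conjugate manipulations).

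The remaining work is the two off-diagonal entries, where we must show cancellation to $0$; this is the main obstacle, since here one cannot simply pair a term with its reversion but must instead repeatedly substitute $\alpha\widehat\beta=\tr(\alpha\widehat\beta)-\beta\widehat\alpha$ to move the scalar traces out and see the non-central remainders cancel in pairs. For the $(1,2)$ entry one computes
$x(\widehat z|y|-\widehat x y\widehat w)+y(\widehat w|x|-\widehat y x\widehat z)=x\widehat z|y|+y\widehat w|x|-x\widehat x y\widehat w-y\widehat y x\widehat z=x\widehat z|y|+y\widehat w|x|-|x|y\widehat w-|y|x\widehat z=0$,
using that $|x|=x\widehat x$ and $|y|=y\widehat y$ are central. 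The $(2,1)$ entry is the mirror image. I would present these four computations compactly, flagging each use of centrality of $|{\cdot}|$ and of $\tr$, and of $\widehat{\alpha\beta}=\widehat\beta\widehat\alpha$; no heavier input than "$\End(E)$ is an order in a quaternion algebra" is required, and the identity in fact holds in any such order, or indeed formally in the free algebra modulo the quaternion relations. The only subtlety to get right is the bookkeeping of which products are genuinely central versus merely conjugate-symmetric, so I would state the three auxiliary relations explicitly before starting the entry-by-entry verification.
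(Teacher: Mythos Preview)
Your verification is correct. The paper states this lemma without proof, treating it as a direct computation, so your entry-by-entry check is exactly what is needed and supplies the omitted details. The diagonal entries reduce via $\alpha+\widehat\alpha=\tr(\alpha)\in\ZZ$ as you say, and your off-diagonal computations are clean; the only point worth making explicit in a final write-up is the cyclicity $\tr(w\widehat y x\widehat z)=\tr(x\widehat z w\widehat y)$ you invoke for the $(2,2)$ entry, which holds because the reduced trace on a quaternion algebra agrees with the matrix trace under any faithful $2$-dimensional representation (or, equivalently, follows from $\tr(\alpha\beta)=\alpha\beta+\widehat\beta\widehat\alpha=\tr(\beta\alpha)$ once one expands both sides using the decomposition into scalar and pure parts).
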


\begin{theorem}
Let $A$ be an abelian surface of $p$-rank $0$ and $a$-number $1$.
Its endomorphism ring is
\[
\End(A)\simeq \left\{
\begin{pmatrix}
x & -\frac{i}{j}x+u\pi \\
z & -\frac{i}{j}z+v\pi
\end{pmatrix}
: x,z,u,v\in\End(E)
\right\}.
\]
where the fraction $\frac{i}{j}$ denotes the corresponding integer modulo $p$.
\end{theorem}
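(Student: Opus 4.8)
The plan is to identify $\End(A)$ as a sublattice of $\frac{1}{p}\End(E\times E)$ by determining exactly which matrices $M = \begin{pmatrix} x & y \\ z & w \end{pmatrix} \in \End(E\times E)$ satisfy that $\frac{1}{p}\varphi^{-1}\circ\widetilde M\circ\varphi$ is a genuine endomorphism of $A$, where $\widetilde M$ denotes the endomorphism of $E\times E$ induced by $M$. By the discussion preceding the statement, $\gamma := \frac{1}{p}\varphi^{-1}\circ\widetilde M\circ\varphi$ lies in $\End(A)$ precisely when the composite $\widetilde M \circ \varphi : E\times E \to A$ factors through $\varphi$, i.e.\ through the quotient $A = (E\times E)/(i,j)(\alpha_p)$; and this holds exactly when $\widetilde M$ maps the subgroup scheme $(i,j)(\alpha_p) \subset E\times E$ into itself. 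This is where Lemma~\ref{lem:comp} (applied with $s_1 = \varphi\circ\widetilde M$ and $s_2 = \varphi$, so that $\ker(s_2) = (i,j)(\alpha_p) \subset \ker(s_1)$ is the condition) enters. So the core task is to translate "$\widetilde M$ stabilizes $(i,j)(\alpha_p)$" into an explicit condition on $x,y,z,w$.

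Next I would make that stabilization condition concrete. The subgroup $(i,j)(\alpha_p)$ is the image of $\alpha_p$ under the map $(i\cdot, j\cdot): \alpha_p \to E\times E$; since $\alpha_p$ is one-dimensional and $\End(\alpha_p) = \overline{\FF}_p$ (with the natural action), a point-scheme-theoretic computation shows $\widetilde M$ sends $(i\,s, j\,s)$ (for $s$ an $\alpha_p$-point) to $(x(i s) + y(j s),\, z(i s) + w(j s))$. Now each endomorphism of $E$ acts on $E[p]$, and its restriction to the connected-connected part $\alpha_p \subset E[p]$ is multiplication by a scalar in $\overline{\FF}_p$; concretely this scalar is the reduction of the endomorphism modulo the maximal ideal of $R = \End(E)$ above $p$, equivalently its "value at $\sqrt{-p}$" reduced mod $p$. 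Call these scalars $\bar x, \bar y, \bar z, \bar w \in \FF_{p^2}$ (or $\overline{\FF}_p$). Then $\widetilde M$ stabilizes $(i,j)(\alpha_p)$ iff the vector $(\bar x i + \bar y j,\ \bar z i + \bar w j)$ is a scalar multiple of $(i,j)$, i.e.\ iff $(\bar x i + \bar y j)\,j = (\bar z i + \bar w j)\,i$. Dividing by $j^2$ and writing $t = i/j$, this becomes $\bar x t + \bar y = (\bar z t + \bar w)\,t$, i.e.\ $\bar y = -\bar x\, t + (\bar z\, t + \bar w)\, t$ — hmm, cleaner: the condition is that $y \equiv -t\,x \pmod{\ppp}$ in the first row and $w \equiv -t\,z \pmod{\ppp}$ — no. Let me restate: writing the stabilization as $(\bar x,\bar y)\cdot(i,j)^T$ proportional to $(i,j)$ via the matrix $\begin{pmatrix}\bar x&\bar y\\\bar z&\bar w\end{pmatrix}$, the condition that $(i,j)$ is an eigenvector says $\bar y/\bar{}$... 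The upshot I expect is: $y + \tfrac{i}{j}x \equiv 0$ and $w + \tfrac{i}{j}z \equiv 0$ modulo $p$ in $\End(E)$, where here $\tfrac{i}{j}$ means the corresponding residue. Equivalently $y = -\tfrac{i}{j}x + u\pi$ and $w = -\tfrac{i}{j}z + v\pi$ with $u,v \in \End(E)$, since $\pi$ (a uniformizer of $\End(E)$ at $p$, as $\pi\bar\pi = p$ and $\End(E)$ is maximal) generates the kernel of reduction mod $\ppp$ — this is exactly the claimed description, with $x, z$ unconstrained. The auxiliary matrix identity stated just before the theorem is what guarantees that $\widetilde M$ (equivalently $M$) is always invertible over $\QQ$, so that $\gamma = \frac1p\varphi^{-1}\widetilde M\varphi$ is well-defined as a quasi-endomorphism; it plays the role of an explicit "adjugate" over the non-commutative ring $\End(E)$, showing $\det$-like element $|x||w| + |y||z| - \tr(x\widehat z w \widehat y)$ is the relevant norm.

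I would then verify that the resulting set is closed under addition and composition — automatic, since it is the preimage of $\End(A)$ under the injective ring map $\End(E\times E) \otimes \QQ \to \End(A)\otimes\QQ$ intersected with $\End(E\times E)$, but worth a sentence — and that it has the correct rank: the constraints cut out a rank-$2$ condition on each of the two rows (over $\ZZ$, as $\End(E)$ has rank $4$ and $\pi\End(E)$ has index $p^2$), giving total rank $4\cdot 4 - \text{(codim)}$, matching $\dim_\QQ \mathcal M_2(\mathcal B_{p,\infty}) = 16$ only after scaling — actually one should check the index $[\End(A):\text{this lattice}]$ against $[\tfrac1p\End(E\times E):\End(E\times E)] = p^8$ and the Dieudonné-module count, so I would cross-check the answer against the known fact $a(A/\alpha_p) = 2$, i.e.\ $A/\alpha_p \simeq E\times E$, which pins down $\End(A)$ up to finite index on the other side too. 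The main obstacle I anticipate is the careful passage from the scheme-theoretic statement "$\widetilde M$ stabilizes the $\alpha_p$-subgroup defined by $(i,j)$" to the arithmetic statement about reductions of $x,y,z,w$ modulo the prime $\ppp \mid p$ of $\End(E)$ — in particular getting the eigenvector condition and the $\tfrac{i}{j}$ bookkeeping exactly right, including the compatibility of the $\FF_{p^2}$-scalar action on $\alpha_p$ with complex conjugation on $\End(E)$, and confirming that "$\equiv 0 \bmod \ppp$" is the same as "$\in \pi\End(E)$" (which uses that $\End(E)$ is a maximal order and $\ppp = \pi\End(E) = \bar\pi\End(E)$ is the unique prime above $p$, ramified, since $E$ is supersingular). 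Once that dictionary is in place the rest is formal.
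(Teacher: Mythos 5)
Your overall framework---identifying $\End(A)$ with a lattice of matrices over $\End(E)$ via $\varphi$, reducing modulo the prime $\pi\End(E)$ above $p$, and using that $\pi$ generates the kernel of that reduction---matches the paper's. But the membership criterion you use is not the one the theorem rests on, and this is exactly where your derivation visibly breaks down. You test whether $m$ \emph{stabilizes} the subgroup scheme $(i,j)(\alpha_p)=\ker\varphi$; that is the condition for $\varphi\circ m$ to factor through $\varphi$, i.e.\ for $\varphi\, m\,\varphi^{-1}$ (with no factor $1/p$) to lie in $\End(A)$, and it only picks out those $\gamma\in\End(A)$ whose pullback $\varphi^{-1}\gamma\varphi$ is integral---a suborder, not all of $\End(A)$. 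As you yourself compute, stabilization is the eigenvector condition $(\bar x i+\bar y j)\,j=(\bar z i+\bar w j)\,i$: a \emph{single} linear condition modulo $\pi$, cutting out a lattice of index $p^2$ in $\mathcal{M}_2(\End(E))$. The theorem's set imposes \emph{two} conditions, $\bar y=-\tfrac{i}{j}\bar x$ and $\bar w=-\tfrac{i}{j}\bar z$ (index $p^4$), which amount to $m$ \emph{annihilating} $\ker\varphi$, i.e.\ $\ker\varphi\subset\ker m$, so that $m$ factors as $m_1\circ\varphi$; the identification with $\End(A)$ then goes through $m=m_1\varphi\mapsto\varphi\, m_1$, not through $m\mapsto\varphi\, m\,\varphi^{-1}$. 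You notice the mismatch mid-argument and then simply assert the two-condition answer as ``the upshot I expect''---but it does not follow from the stabilization condition, so the central step of the proof is missing.

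The paper's proof uses precisely the annihilation condition, obtained from Lemma~\ref{lem:comp} applied to $s_1=m$ and $s_2=\varphi$, together with a second kernel condition on the explicit ``adjugate'' matrix of the preceding lemma (that matrix plays the role of the dual of $m$, since $m$ times it equals the reduced norm times the identity); the proof then verifies that this second condition holds automatically once the first does. Your reading of that auxiliary lemma as a mere invertibility certificate therefore misses its actual function in the argument. Finally, the rank/index ``cross-check'' in your last paragraph is only sketched, not carried out, so it cannot repair the gap: had you computed the index, you would have found $p^2$ rather than the $p^4$ required by the statement.
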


\begin{proof}
Let $m=\begin{pmatrix}x & y \\ z & w\end{pmatrix}\in\End(E\times E)$.
We know that $m\in\End(A)$ if and only if $\ker\varphi\subset\ker m$ and
\[
\ker(\varphi)\subset\ker\begin{pmatrix}\widehat{x}|w|-\widehat{z}w\widehat{y} & \widehat{z}|y|-\widehat{x}y\widehat{w}\\ \widehat{y}|z|-\widehat{w}z\widehat{x}& \widehat{w}|x|-\widehat{y}x\widehat{z}\end{pmatrix}.
\]

Applying Lemma~\ref{lem:comp} to $s_1=m$ and $s_2=\varphi$,
the first condition implies that $y$ and $w$ are of the form
\[
y = -\frac{i}{j}x+u\pi
\qquad \text{and} \qquad
w = -\frac{i}{j}z+v\pi
\]
for some endomorphisms $u$ and $v$.
Then we have $\widehat{y}=-\frac{i}{j}\widehat{x}+V\widehat{u}=-\frac{i}{j}\widehat{x}+u'V$.
Similarly, we have $\widehat{w}=-\frac{i}{j}\widehat{z}+v'V$.
Therefore, the second condition always holds if the first one does.
\end{proof}

\subsubsection{$p$-rank-$1$ case}
\label{sec:non-simple-prank1}
First of all, let us recall that if an abelian surface decomposes as a product of an ordinary and a supersingular elliptic curve, this decomposition occurs on the base field. 

\begin{corollary}[\cite{howe-zhu} and {\cite[Corollary 2.17]{maisner-nart}}]
If an abelian surface $A$ defined over $\FF_q$ decomposes over $\overline{\FF}_q$ as the product of two elliptic curves, one supersingular, the other ordinary, then $A$ decomposes over the base field $\FF_q$.
\end{corollary}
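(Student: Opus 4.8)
The plan is to analyze the Galois action on the geometric decomposition. Suppose $A$ over $\FF_q$ becomes isomorphic over $\overline{\FF}_q$ to $E_s \times E_o$ with $E_s$ supersingular and $E_o$ ordinary. The decomposition $A_{\overline{\FF}_q} \simeq E_s \times E_o$ is \emph{canonical} in the following sense: the supersingular part of an abelian variety — equivalently, the factor on which Frobenius acts with the $p$-adic valuations characteristic of the supersingular isogeny class — is a Galois-stable subvariety, because $E_s$ can be intrinsically characterized as (the connected part of) the maximal supersingular abelian subvariety of $A_{\overline{\FF}_q}$, or dually via the $p$-divisible group: $A[p^\infty]$ splits, over any perfect field, according to Newton slopes, and the slope-$1/2$ part corresponds to $E_s$ while the étale-by-multiplicative part (slopes $0$ and $1$) corresponds to $E_o$. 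Hence the kernel of the projection $A_{\overline{\FF}_q} \to E_o$, being the image of $E_s$, is stable under $\Gal(\overline{\FF}_q/\FF_q)$, and so are both factors.

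The key steps, in order, are: (1) recall that over a finite field the abelian variety $A$, being non-simple of $p$-rank $1$, is isogenous over $\FF_q$ to a product of an ordinary and a supersingular elliptic curve — this is read off from the factorization of $f_A$ via \cite[Theorem~6]{howe-zhu}, since each factor has coefficients in $\ZZ$ and the isogeny-class decomposition descends. (2) Observe that the connected reduced subvariety $E_s \subset A_{\overline{\FF}_q}$ (the image of the inclusion coming from the geometric product) is Galois-stable: any conjugate $E_s^\sigma$ is again a supersingular elliptic subvariety of $A_{\overline{\FF}_q}$, and since $A$ has $p$-rank $1$ there is a \emph{unique} such abelian subvariety up to the finitely many translations, forcing $E_s^\sigma = E_s$; alternatively, invoke the Newton-slope decomposition of the $p$-divisible group, which is functorial and hence Galois-equivariant. (3) Conclude that $E_s$, being a geometrically connected smooth subvariety of $A$ defined set-theoretically over $\FF_q$, descends to an elliptic curve over $\FF_q$; passing to the quotient, $E_o \simeq A/E_s$ is then also defined over $\FF_q$, and $A_{\FF_q}$ is isogenous — in fact, after adjusting by the separable isogeny $A \to E_s \times (A/E_s)$ built from these subvarieties, one recovers the product decomposition — over $\FF_q$.

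The main obstacle is step (2): pinning down precisely why the geometric supersingular factor is Galois-stable, as opposed to merely its isogeny class. The cleanest route is the $p$-divisible group argument: for $A$ of $p$-rank $1$ and dimension $2$, the Newton polygon of $A[p^\infty]$ has slopes $0$, $1/2$, $1/2$, $1$, and the slope filtration (or, over a perfect field, the splitting into the multiplicative, bi-infinitesimal, and étale parts) is canonical and therefore commutes with the Galois action; the slope-$1/2$ part is a $1$-dimensional formal group of height $2$, which is exactly $E_s[p^\infty]$, and this already identifies $E_s$ up to isogeny over $\FF_q$ — to get the actual subvariety one uses that a supersingular elliptic curve inside $A_{\overline{\FF}_q}$ with prescribed $p$-divisible group is rigid enough that its Galois conjugates coincide. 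An alternative, perhaps more elementary, obstacle-avoiding tactic is to cite directly the reference \cite[Corollary 2.17]{maisner-nart} for the isogeny statement and then upgrade to an isomorphism of products using that any elliptic curve in the isogeny class, once realized as a subvariety defined over $\FF_q$, yields the splitting; I expect the paper's intended proof is short precisely because \cite[Corollary 2.17]{maisner-nart} already does the heavy lifting at the level of isogeny classes, and the only remaining point is the uniqueness of the supersingular subvariety which makes it automatically $\FF_q$-rational.
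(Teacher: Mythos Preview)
The paper gives no proof of its own: the corollary is attributed directly to \cite{howe-zhu} and \cite[Corollary~2.17]{maisner-nart}, whose arguments operate at the level of Weil polynomials (if $A$ were $\FF_q$-simple but geometrically split, some ratio of Frobenius eigenvalues would be a root of unity, which the disjoint $p$-adic valuations of ordinary versus supersingular Weil numbers forbid). Your route is genuinely different and more geometric, and the core idea is sound: the supersingular elliptic subvariety of $A_{\overline\FF_q}$ is unique, hence Galois-stable, hence descends. In fact this uniqueness is easier than you make it---since $\Hom(E_s,E_o)=0$, any supersingular abelian subvariety projects to zero in the ordinary factor and must therefore coincide with the image of $E_s$; no Newton slope filtration or $p$-divisible group machinery is needed, and the phrase ``up to the finitely many translations'' should be deleted (abelian subvarieties contain $0$). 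Two minor corrections: the statement and its references concern decomposition up to \emph{isogeny}, not the isomorphism $A_{\overline\FF_q}\simeq E_s\times E_o$ that your first line assumes, though your argument adapts immediately; and the final step---once both the supersingular and the equally unique ordinary subvariety descend, the addition map furnishes an $\FF_q$-isogeny $E_s\times E_o'\to A$---is only gestured at and deserves one clean sentence rather than the hedge about ``adjusting by the separable isogeny.''
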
 

The following result describes the algebra and the endomorphism ring in the case of $p$-rank $1$.

\begin{proposition}\label{propnsp1}
Let $A/\FF_q$ be a non-simple abelian surface of $p$-rank $1$.
It is isomorphic to a quotient $(E_1\times E_2)/H$ where $E_1$ is an ordinary elliptic curve, $E_2$ is a supersingular one, and $H$ is a finite subgroup. In other words, we have an exact sequence
\[
1\to H\to E_1\times E_2\stackrel{\varphi}{\to} A \to 1
\]
which gives an isomorphism of endomorphism algebras
\[
\QQ\otimes\End(A)\simeq\QQ\otimes\left(\End(E_1)\times\End(E_2)\right).
\]
More precisely, the endomorphism ring $\End(A)$
is the suborder of $\End(E_1)\times\End(E_2)$
made of elements $s$ such that  $H\subset\ker s$.
\end{proposition}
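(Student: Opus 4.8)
The plan is to establish the three assertions in order: the decomposition as a quotient $(E_1\times E_2)/H$, the isomorphism of endomorphism algebras, and finally the precise description of $\End(A)$ as a suborder. First I would invoke the general structure theory for non-simple abelian surfaces. Since $A$ is non-simple, it is isogenous over $\overline{\FF}_q$ to a product of elliptic curves $E_1'\times E_2'$; because the $p$-rank is additive under products and equals $1$ here, exactly one of the factors is ordinary and the other supersingular. By the Corollary preceding the statement (\cite{howe-zhu} and \cite[Corollary~2.17]{maisner-nart}), this decomposition is in fact defined over $\FF_q$, so $A$ is $\FF_q$-isogenous to a product $E_1\times E_2$ with $E_1$ ordinary and $E_2$ supersingular. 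Replacing the isogeny $A\to E_1\times E_2$ by its dual composed appropriately — or simply taking any isogeny $\varphi:E_1\times E_2\to A$ whose existence follows from isogeny being symmetric up to multiplication by the degree — we obtain a surjection $\varphi$ with finite kernel $H$, and $A\simeq (E_1\times E_2)/H$; this gives the exact sequence $1\to H\to E_1\times E_2\xrightarrow{\varphi} A\to 1$.

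Next, for the algebra isomorphism: since $\varphi$ is an isogeny, it induces an isomorphism $\QQ\otimes\End(E_1\times E_2)\xrightarrow{\sim}\QQ\otimes\End(A)$ by the standard recipe $\beta\mapsto \frac{1}{\deg\varphi}\,\varphi\circ\beta\circ\widehat\varphi$ (this is exactly the kind of map already used in the $p$-rank-$0$ discussion just above). It then suffices to observe that $\QQ\otimes\End(E_1\times E_2)\simeq \QQ\otimes(\End(E_1)\times\End(E_2))$: the cross terms $\Hom(E_1,E_2)$ and $\Hom(E_2,E_1)$ vanish because $E_1$ is ordinary and $E_2$ supersingular, so any homomorphism between them must be zero (an ordinary elliptic curve and a supersingular one are never isogenous, as they have different $p$-ranks). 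Hence the off-diagonal blocks in the matrix description of $\End(E_1\times E_2)$ are trivial and we get the claimed product decomposition of the algebra.

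Finally, for the precise description of the ring: identify $\End(A)$, via the algebra isomorphism above, with a full-rank subring of $\QQ\otimes(\End(E_1)\times\End(E_2))$ containing a copy of $\End(E_1)\times\End(E_2)$ up to bounded index. An endomorphism $\gamma\in\End(A)$ pulls back to $\widehat\varphi\circ\gamma\circ\varphi\in\End(E_1\times E_2)$, which equals a genuine diagonal endomorphism $s=(s_1,s_2)\in\End(E_1)\times\End(E_2)$ exactly when the induced rational endomorphism of $E_1\times E_2$ is integral. Concretely, a diagonal $s\in\End(E_1)\times\End(E_2)$ descends to an endomorphism of $A=(E_1\times E_2)/H$ if and only if $s$ maps $H$ into $H$; since $H$ is finite and $s$ is an endomorphism fixing $0$, the condition $s(H)\subseteq H$ is equivalent — using that $\End(E_1)\times\End(E_2)$ acts on the finite group $H$ and any element of $H$ has order dividing $\deg\varphi$ — to $H\subseteq\ker s$ after adjusting, but more directly one argues via Lemma~\ref{lem:comp}: $s$ factors through $\varphi$ (i.e.\ $\varphi\circ s$ factors as $\gamma\circ\varphi$ for some $\gamma$) if and only if $\ker\varphi=H\subseteq\ker(\varphi\circ s)$, which holds iff $s(H)\subseteq H=\ker\varphi$; and the resulting $\gamma$ is an endomorphism of $A$. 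I expect the main subtlety to be this last equivalence — pinning down that ``descends to $\End(A)$'' is precisely the condition $H\subset\ker s$ rather than the a priori weaker $s(H)\subseteq H$. This is resolved by noting that $H$ is killed by $\deg\varphi$ and that if $\gamma:A\to A$ is the induced map then $\widehat\varphi\circ\gamma\circ\varphi = \deg(\varphi)\cdot s$ on $E_1\times E_2$, so one must track the factor $\deg\varphi$ carefully; alternatively one phrases everything in terms of the dual isogeny and Corollary~\ref{cor:comp}. Once the bookkeeping is done, the description $\End(A)=\{s\in\End(E_1)\times\End(E_2): H\subset\ker s\}$ follows, completing the proof.
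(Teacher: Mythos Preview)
Your plan follows exactly the route the paper takes: factor the Frobenius polynomial to see that $A$ is isogenous to a product of an ordinary and a supersingular elliptic curve, realise $A$ as $(E_1\times E_2)/H$, and then read everything else off the exact sequence. The paper's own proof is a two-line sketch (``the decomposition of the characteristic polynomial \ldots\ the rest follows naturally from the exact sequence''), so your first two steps are strictly more detailed and are fine; in particular your observation that $\Hom(E_1,E_2)=0$ because an ordinary and a supersingular curve can never be isogenous is the right way to get $\QQ\otimes\End(E_1\times E_2)\simeq\QQ\otimes(\End(E_1)\times\End(E_2))$.

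Where you should be careful is precisely the point you flag yourself. The natural descent condition for $s\in\End(E_1)\times\End(E_2)$ to induce an endomorphism of $A=(E_1\times E_2)/H$ is $s(H)\subseteq H$, not $H\subset\ker s$; your Lemma~\ref{lem:comp} argument correctly yields the former. These two conditions are \emph{not} equivalent (take $s=\id$), and neither of your proposed fixes actually bridges the gap: the claim that ``$s(H)\subseteq H$ is equivalent to $H\subseteq\ker s$ after adjusting'' is false as stated, and the identity $\widehat\varphi\circ\gamma\circ\varphi=\deg(\varphi)\cdot s$ that you invoke holds for elliptic curves but not for abelian surfaces in general, since $\widehat\varphi\circ\varphi$ need not be multiplication by an integer. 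So your bookkeeping sketch does not close the gap. That said, the paper's proof does not address this point at all --- it simply asserts that the ring description ``follows naturally from the exact sequence'' --- so you have not missed an argument that is present in the paper; you have rather put your finger on a place where the paper's statement and proof are themselves loose. If you want to make the last claim precise you should state explicitly which embedding of $\End(A)$ into $\End(E_1)\times\End(E_2)$ (or into $\tfrac1n\End(E_1)\times\End(E_2)$ for a suitable $n$) you are using, and then verify that under that embedding the image is exactly the set described.
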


\begin{proof}
The decomposition of the characteristic polynomial of the Frobenius endomorphism implies that we have an isogeny
$A\sim (E_1\times E_2)/H$  where $E_1$ is an ordinary elliptic curve and $E_2$ is a supersingular one.
The rest of the statement follows naturally from the exact sequence.
\end{proof}

As in the previous case, the computational difficulty is finding an isogeny between $A$ and $E_1\times E_2$; see Section \ref{sec:ellfactor}.

\subsubsection{$p$-rank-$2$ case}
\label{sec:non-simple-prank2}
In this case, the surface $A$ decomposes over the algebraic closure as the product of two ordinary elliptic curves. 
\begin{proposition}[\cite{kani}]
Non-simple abelian surfaces $A/\FF_q$ of $p$-rank $2$ are isomorphic to
a product $E_1\times E_2$ of two ordinary elliptic curves and their endomorphism ring is isomorphic to
\[
\begin{pmatrix}
\End(E_1) & \Hom(E_2,E_1) \\
\Hom(E_1,E_2) & \End(E_2)
\end{pmatrix}.
\]
\end{proposition}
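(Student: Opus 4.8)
The statement has two parts: a structural claim (that such surfaces are isomorphic, not merely isogenous, to a product $E_1\times E_2$ of ordinary elliptic curves), and the description of the endomorphism ring. The second part is the easier one: once $A\simeq E_1\times E_2$, the matrix description of $\End(E_1\times E_2)$ is the standard decomposition already recalled in Section~\ref{sec:preliminaries}, so the content lies entirely in the isomorphism. The plan is therefore to argue the structural part first and then invoke the matrix formula.

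For the structural part, I would start from the fact that $A$ has $p$-rank $2$ and is non-simple, hence ordinary and isogenous over $\overline{\FF}_q$ to a product of two elliptic curves; since the $p$-rank is additive and these factors contribute $p$-rank $1$ each, both factors are ordinary. The next point is that an ordinary abelian surface admits a principal polarization compatible with its decomposition, and one invokes the cited result of Kani \cite{kani} (or, alternatively, the classical fact that a principally polarized abelian surface is either a Jacobian or a product of two elliptic curves with the product polarization, combined with the observation that in the decomposable ordinary case the Jacobian option is excluded by the shape of the Frobenius characteristic polynomial). The key mechanism is that the principal polarization on $A$, restricted along the isogeny to the decomposed cover, forces the two elliptic factors to be genuinely split off as a direct product rather than glued along a nontrivial subgroup — this is precisely where Kani's analysis of the possible gluings enters, and it is the step I expect to be the main obstacle, since it requires knowing that no nontrivial self-dual gluing subgroup can support a principal polarization of product type when the factors are ordinary with no extra isogenies forced.

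Once the isomorphism $A\simeq E_1\times E_2$ is established, the endomorphism ring computation is immediate: for any pair of abelian varieties, $\End(E_1\times E_2)$ is the ring of matrices $\begin{pmatrix}\alpha & \beta\\ \gamma & \delta\end{pmatrix}$ with $\alpha\in\End(E_1)$, $\delta\in\End(E_2)$, $\beta\in\Hom(E_2,E_1)$, $\gamma\in\Hom(E_1,E_2)$, with composition being matrix multiplication; this was already stated in Section~\ref{sec:preliminaries} and needs no further argument here. I would close by noting that the $\Hom$-modules are nonzero only when $E_1$ and $E_2$ are isogenous, in which case they are rank-one modules over the relevant imaginary quadratic orders, recovering the shape displayed in Table~\ref{table:cases}. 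So the proof reduces, in one line, to: "By \cite{kani}, $A\simeq E_1\times E_2$ with $E_1,E_2$ ordinary elliptic curves; the formula for $\End(E_1\times E_2)$ is then the one recalled in Section~\ref{sec:preliminaries}."
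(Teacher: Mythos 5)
The paper offers no proof of this proposition: it is stated as a result of Kani and cited directly, so your bottom line---invoke \cite{kani} for the isomorphism $A\simeq E_1\times E_2$ and then apply the standard matrix description of $\End(E_1\times E_2)$ already recalled in Section~\ref{sec:preliminaries}---is exactly the paper's treatment, and the reduction of the content to the structural claim is the right way to read the statement.

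However, your proposed ``alternative'' justification of the structural part is wrong and worth correcting. You suggest excluding the Jacobian case via Weil's dichotomy (Jacobian versus product with the product polarization) together with ``the observation that in the decomposable ordinary case the Jacobian option is excluded by the shape of the Frobenius characteristic polynomial.'' That observation is false: non-simple ordinary abelian surfaces can perfectly well be Jacobians of genus-two curves---these are the split Jacobians to which the whole of Section~\ref{sec:ellfactor} is devoted, and their Frobenius polynomial factors as a product of two quadratics. The point of the proposition is that the isomorphism $A\simeq E_1\times E_2$ holds as \emph{unpolarized} abelian varieties; the principal polarization carried by $A$ need not be the product polarization, and indeed the paper notes immediately after the proposition that when $A=\Jac(C)$ has $p$-rank $2$ it is indecomposable as a polarized variety and therefore carries a principal polarization different from the product one, forcing $\Hom(E_1,E_2)\neq 0$. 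So the polarization cannot be used to ``force the factors to split off as a direct product''; the splitting is a module-theoretic fact about the isogeny class (this is what Kani proves), not a consequence of Weil's dichotomy. If you strip out that aside and keep only the one-line reduction to \cite{kani} plus the matrix formula, the proposal is correct and coincides with the paper.
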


When $A$ is the Jacobian variety of a genus-two curve and has $p$-rank $2$,
it is an indecomposable variety and, therefore, admits a principal polarization different from the product one.
Therefore, in this case, we have $\Hom(E_1,E_2)\neq 0$.

\begin{remark}
Since $E_1$ and $E_2$ are ordinary, computing $\Hom(E_1,E_2)$ reduces to the isogeny path problem.
Indeed, if $\End(E_1)=\End(E_2)$, computing $\Hom(E_1,E_2)$ is equivalent to computing the ideal class of the class group corresponding to isogenies from $E_1$ to $E_2$.
If $\End (E_1)\neq\End(E_2)$, one would first compute vertical isogenies from each $E_i$ to an elliptic curve $E_i'$ with endomorphism ring $\End(E_1)+\End(E_2)$ and the problem is then reduced to the previous case.
\end{remark}


\subsection{Computing elliptic factors}\label{sec:ellfactor}
Let $A=\Jac(C)$ be the Jacobian variety of a genus-two curve defined over a finite field $\FF_q$.
In this section, all results are stated over the algebraic closure $\overline\FF_q$ for simplicity; they can be exploited effectively by working on the base field and then extending it as necessary : as already mentioned, all endomorphisms are defined over extensions of the base field of bounded degree \cite[Theorem~2.4]{silverberg-fields-of-def}. 

When $A$ is non-simple, we look for its elliptic factors as elliptic subcovers,
following the work of Kani \cite{kani94,Kani97,kani16,Kani19}.

Suppose that $C$ admits a non-constant morphism $f : C\rightarrow E$ to an elliptic curve $E$.
If $f$ does not factor over an isogeny of $E$, then we say that $f$ is an elliptic subcover of $C$.
Note that this last condition imposes no essential restriction since every nonconstant
$f : C \rightarrow E$ factors over a unique elliptic subcover.

A classical theorem due to Picard \cite{picard} and Bolza \cite{Bolza98} states that a
curve $C$ of genus two has either none, two or infinitely many elliptic subcovers. This
is in part due to the fact that the elliptic subcovers occur in pairs: given an elliptic subcover $f : C\rightarrow E$, there is a canonical “complementary” elliptic
subcover $f': C\rightarrow E'$ of the same degree $ \deg(f) = \deg(f')$ such that the induced maps on the associated Jacobian varieties
fit into an exact sequence
\[0 \rightarrow \Jac(E) \xrightarrow{f_*} \Jac(C)\xrightarrow{f'_*}
\Jac(E')\rightarrow 0.\]

\begin{proposition}\label{existencenn}
Let $A$ be a non-simple Jacobian variety of dimension $2$.
There exists an $(n,n)$-isogeny, preserving the polarization, to a product of elliptic curves with the product polarization.
\end{proposition}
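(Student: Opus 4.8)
The plan is to use the pairing structure on $\Jac(C)$ coming from the principal polarization to manufacture the desired $(n,n)$-isogeny directly from an elliptic subcover. Since $A=\Jac(C)$ is non-simple, by the theorem of Picard--Bolza recalled above it admits an elliptic subcover $f:C\to E$ of some degree $n$, together with its complementary subcover $f':C\to E'$ of the same degree, fitting into the exact sequence
\[
0\to\Jac(E)\xrightarrow{f_*}\Jac(C)\xrightarrow{f'_*}\Jac(E')\to 0.
\]
Dually, the pullback maps $f^*:\Jac(E)=E\to\Jac(C)=A$ and $(f')^*:E'\to A$ assemble into a map $\Phi=(f^*,(f')^*):E\times E'\to A$, and the composite $\Phi^\vee\circ\Phi$ (identifying each factor with its dual via the canonical polarizations) is multiplication by $\deg f=n$ on each elliptic factor, because $f_*\circ f^*=[\deg f]$. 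So $\Phi$ is an isogeny of degree $n^2$ with $\ker\Phi\subset (E\times E')[n]$.

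The key point is to check that $\Phi$ pulls the principal polarization $\lambda_A$ of $A$ back to $n$ times the product polarization on $E\times E'$, i.e. $\Phi^\vee\lambda_A\Phi=[n]\cdot(\lambda_E\times\lambda_{E'})$, which is exactly the statement that $\Phi$ is an $(n,n)$-isogeny preserving the polarization in the sense required. This is a standard compatibility: the pullback of the principal polarization of a Jacobian along $f^*$ is $\deg f$ times the principal polarization of $E$ (Weil pairing functoriality, or the fact that $f_*$ and $f^*$ are adjoint for the intersection/theta pairings), and the two elliptic subcovers are orthogonal for $\lambda_A$ precisely because the sequence above is exact and $f^*_*=f_*$ up to the canonical identifications — this orthogonality is the content of the complementarity of $f$ and $f'$. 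Feeding this in shows $\ker\Phi$ is isotropic for the Weil $e_n$-pairing on $(E\times E')[n]$ and of order $n$... but actually one wants a genuine $(n,n)$-isogeny, so instead I would argue as follows: take $\psi:E\times E'\to A$ to be $\Phi$ divided through by the (cyclic, by complementarity) connected-to-nothing part if necessary, or rather observe directly that since $\Phi$ has degree $n^2$ and kernel killed by $n$, and $\Phi^\vee\lambda_A\Phi=[n](\lambda_E\times\lambda_{E'})$, the induced polarization $\Phi^*\lambda_A$ on $E\times E'$ has elementary divisors $(n,n)$, which is the definition of $\Phi$ being an $(n,n)$-isogeny to a principally polarized product.

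The main obstacle is the polarization bookkeeping: one must verify cleanly that the two pullback maps $f^*$ and $(f')^*$ land in orthogonal-up-to-$n$ sub-abelian-varieties of $A$ with respect to $\lambda_A$, so that $\Phi^*\lambda_A$ is block-diagonal with each block equal to $n$ times a principal polarization, rather than having off-diagonal Hom-components. This orthogonality is exactly Kani's analysis of elliptic subcovers (the exact sequence together with $\deg f=\deg f'$ forces the Rosati-adjoint of $f^*$ to be $f_*$ and makes $f_*\circ(f')^*=0$), so I would cite \cite{Kani97,kani16} for it rather than reprove it. Everything else — that $\Phi$ has degree $n^2$, that its kernel is $n$-torsion, and that these two facts plus the polarization identity pin down the elementary divisor type as $(n,n)$ — is routine linear algebra over $\ZZ$ applied to the polarization form, and I would not spell it out in detail.
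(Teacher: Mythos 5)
The paper states this proposition without proof: it is presented as an immediate consequence of Kani's theory of elliptic subcovers recalled in the two paragraphs preceding it, so there is no in-text argument to compare yours against. Your route is the standard one and is essentially correct: take complementary subcovers $f,f'$ of common degree $n$, form $\Phi=(f^*,(f')^*):E\times E'\to\Jac(C)$, use the Rosati adjointness $(f^*)^\dagger=f_*$ to get $n\lambda_E$ and $n\lambda_{E'}$ on the diagonal of $\Phi^\vee\circ\lambda_A\circ\Phi$, and use exactness of the sequence (i.e.\ $f_*\circ(f')^*=0$) to kill the off-diagonal blocks, whence $\Phi^\vee\circ\lambda_A\circ\Phi=[n]\circ(\lambda_E\times\lambda_{E'})$, $\deg\Phi=n^2$ and $\ker\Phi\subset(E\times E')[n]$. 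Two points to tighten. First, the statement asks for an isogeny \emph{from} $A$ to the product, so you should finish by passing to the complementary isogeny $(\lambda_E\times\lambda_{E'})^{-1}\circ\Phi^\vee\circ\lambda_A:A\to E\times E'$, whose kernel $\Phi\bigl((E\times E')[n]\bigr)$ is the maximal isotropic subgroup of $A[n]$ that the subsequent algorithm actually enumerates. Second, the sentence about dividing $\Phi$ by a ``connected-to-nothing part'' should simply be deleted --- it is not meaningful and you rightly abandon it; and if you want the kernel to be literally isomorphic to $(\ZZ/n\ZZ)^2$ rather than merely a maximal isotropic subgroup of order $n^2$ killed by $n$ (which is what your degree-plus-polarization argument yields, and which suffices for the paper's use of the result), you need Kani's description of $\ker\Phi$ as the graph of an anti-isometry $E[n]\to E'[n]$ --- that is in the references you already cite, so this is a citation to add rather than a gap.
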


 The case in which $C$ has infinitely many elliptic subcovers happens precisely when the Jacobian of $C$ is $\overline{\FF}_q$-isogenous to $E^2$, for some elliptic curve $E/\overline{\FF}_q$. In particular, in the non-simple $p$-rank 1 case the eliptic curves $E_1$ and $E_2$ and the group $H$ in Proposition \ref{propnsp1} are uniquely determined and the isogeny $A\rightarrow E_1\times E_2$ is an $(n,n)$-isogeny. 

\subsubsection{Bounding the degree and the field of definition of the subcover}
\label{theoretical-bounds}

A bound on the degree of the isogeny may be derived from the following results.

 \begin{theorem}
 Let $(A, \lambda_A)$ be a principally polarized abelian surface,
 denote by $\operatorname{NS}(A, \lambda_A)$ its Néro-Severi group and let $n$ be an integer.
We have a one–to–one correspondence between
\begin{itemize}
\item the set of all elliptic subcovers $E$  of $A$ of degree $n = \deg(E)$, and
\item the set of primitive classes $[D] \in \operatorname{NS}(A, \lambda_A)$ with invariant $\Delta(D) = n^2$
\end{itemize}
given by the embedding $E\to[E] \in\operatorname{NS}(A, \lambda_A)$.
 \end{theorem}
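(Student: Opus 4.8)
The plan is to split the asserted correspondence into the composite of two more elementary bijections: one between elliptic subcovers of $C$ of degree $n$ and one-dimensional abelian subvarieties of $A=\Jac(C)$ of exponent $n$, and one between those subvarieties and primitive classes of invariant $n^{2}$, the second being a direct application of the classical dictionary between the Néron--Severi group and symmetric endomorphisms. Throughout I write $\Theta$ for the canonical principal polarization class of $A$, so that $\lambda_A=\varphi_\Theta$.

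First I would recall the isomorphism $\operatorname{NS}(A)\xrightarrow{\ \sim\ }\{\alpha\in\End(A):\alpha^{\dagger}=\alpha\}$, $[D]\mapsto\psi_D:=\lambda_A^{-1}\circ\varphi_D$, under which $[\Theta]\mapsto\operatorname{id}_A$. As $\dim A=2$, each $\psi_D$ obeys a reduced characteristic equation $\psi_D^{2}-(D\cdot\Theta)\,\psi_D+\tfrac{1}{2}(D^{2})=0$, whose discriminant $(D\cdot\Theta)^{2}-2(D^{2})$ is the invariant $\Delta(D)$. For a one-dimensional abelian subvariety $B\subseteq A$, the norm endomorphism $\varepsilon_B$ is symmetric with $\varepsilon_B^{2}=e\,\varepsilon_B$, where $e\ge 1$ is the exponent of $B$; so the corresponding class has self-intersection $0$, meets $\Theta$ with multiplicity $e$, has invariant $e^{2}$, and is primitive ($\varepsilon_B/m\notin\End(A)$ for $m\ge 2$, by minimality of $e$). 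Conversely, when $\Delta(D)=n^{2}$ one may replace $D$ by the representative of $D+\ZZ\Theta$ whose reduced characteristic polynomial is $t(t-n)$ --- the normalization implicit in $\operatorname{NS}(A,\lambda_A)$ --- and then $\psi_D/n$ is a symmetric idempotent of $\QQ\otimes\End(A)$ whose image is a one-dimensional abelian subvariety of exponent $n$ with class $D$. This establishes the bijection between such subvarieties and primitive classes of invariant $n^{2}$.

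It then remains to match subvarieties with subcovers. Given an elliptic subcover $f:C\to E$ of degree $n$, the pullback $f^{*}:\Jac(E)\to\Jac(C)$ and the pushforward $f_{*}:\Jac(C)\to\Jac(E)$ are transpose to one another under the canonical principal polarizations (a standard property of Jacobians) and satisfy $f_{*}f^{*}=[n]$; hence $f^{*}f_{*}$ is symmetric with $(f^{*}f_{*})^{2}=n\,(f^{*}f_{*})$, i.e.\ it is the norm endomorphism of $B:=f^{*}(\Jac(E))\subseteq A$. Since $f^{*}$ is injective for subcovers and $(f^{*})^{\vee}\circ\lambda_A\circ f^{*}=n\,\lambda_{\Jac(E)}$, the subvariety $B$ is isomorphic to $E$ and has exponent exactly $n$; by the exact sequence recalled above it is the image of $\Jac(E)$ in $A$, so its class is the $[E]$ of the statement and $\Delta([E])=n^{2}$. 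That $f$ does not factor through an isogeny is exactly what makes $[E]$ primitive: a factorization $f=\iota\circ g$ with $g:C\to E_0$ and $\deg\iota=m>1$ gives $f^{*}=g^{*}\circ\iota^{*}$ with $\iota^{*}$ again an isogeny, whence $B=g^{*}(\Jac(E_0))$ has exponent $n/m$. In the reverse direction, from a subvariety $B$ of exponent $n$ one forms $C\hookrightarrow A\to A/B$, the composite of the Abel--Jacobi embedding with the quotient: this is a non-constant morphism onto an elliptic curve which, compared with the complementary-subcover structure, is itself an elliptic subcover of degree equal to the exponent of $B$. One verifies the two constructions are mutually inverse, and injectivity of $E\mapsto[E]$ follows because $[E]$ determines $f^{*}f_{*}$, hence $B$, hence --- through its complement, the Abel--Jacobi embedding, and the uniqueness of the minimal subcover (Picard--Bolza) --- the subcover $f$ itself.

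The step I expect to be most delicate is this last matching, for two reasons. First, the normalization: since $\Delta$ is unchanged by $D\mapsto D+k\Theta$, one must pin down which representative of a coset yields $E$ and which yields its complementary subcover $E'$, check that both genuinely occur (reflecting that subcovers come in pairs of equal degree), and confirm that this agrees with the definitions of $\operatorname{NS}(A,\lambda_A)$ and $\Delta$ used in the statement. Second, the passage between abelian subvarieties over $\overline{\FF}_q$ and honest morphisms of curves, for which the exact sequence $0\to\Jac(E)\to\Jac(C)\to\Jac(E')\to 0$, the identification of $A/B$ with the Jacobian of a curve, and the uniqueness of the minimal subcover are the essential inputs, and where one must check that the exponent of $B$, the intersection number $([B]\cdot\Theta)$, and the covering degree all coincide. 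The remaining ingredients --- the Néron--Severi/endomorphism dictionary and the equivalence between primitivity and not factoring through an isogeny --- are routine once these conventions are fixed.
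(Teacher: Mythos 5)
The paper does not actually prove this statement: it is quoted, like the theorem that follows it, from Kani's work on elliptic subcovers (the correspondence between elliptic subcovers and primitive Néron--Severi classes of invariant $n^2$), so there is no in-paper argument to compare yours against. Your sketch reconstructs what is essentially Kani's own route --- the dictionary $\operatorname{NS}(A)\simeq\{\alpha\in\End(A):\alpha^\dagger=\alpha\}$, the identification of the class of an elliptic curve $B\subset A$ with its norm endomorphism $N_B$ satisfying $N_B^2=eN_B$, the computation $\Delta([B])=e^2$, and the matching of exponent-$n$ subvarieties with degree-$n$ subcovers via $f^*$ and $f_*f^*=[n]$ --- and the numerics are consistent with the paper's convention $q_{(A,\lambda_A)}(D)=(D.\lambda_A)^2-2(D.D)$, which is indeed invariant under $D\mapsto D+k\lambda_A$, so that primitivity must be read in the quotient $\operatorname{NS}(A)/\ZZ\lambda_A$ as you correctly anticipate.

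Two points deserve tightening. First, your reverse construction $B\mapsto(C\hookrightarrow A\to A/B)$ does not invert $f\mapsto B=f^*(\Jac(E))$: since $f$ factors (up to translation) as $C\hookrightarrow\Jac(C)\xrightarrow{f_*}\Jac(E)$ and $\ker(f_*)^\circ$ is the \emph{complementary} subvariety $B'=\operatorname{im}(f'^*)$, the quotient $A/B$ is $E'$, not $E$; you recover $f$ itself from $C\to A/B'$ (equivalently by restricting the norm map $N_B$ to the Abel--Jacobi image). Because $B\mapsto B'$ is an exponent-preserving involution this does not destroy bijectivity, but as written the two maps are not mutually inverse. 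Second, the assertion that primitivity of $[E]$ is equivalent to $f$ not factoring through an isogeny is the right idea ($\Delta(mD_0)=m^2\Delta(D_0)$, and a factorization through a degree-$m$ isogeny replaces the class by $m$ times one of invariant $(n/m)^2$), but the converse direction --- that every imprimitive class with $\Delta=n^2$ arises this way, and that a primitive class of invariant $n^2$ with the normalized representative $t(t-n)$ genuinely yields a symmetric idempotent $\psi_D/n$ in $\QQ\otimes\End(A)$ whose image is an abelian subvariety of exponent exactly $n$ --- is stated rather than verified, and is where the actual content of Kani's proof lives. As a blind reconstruction the architecture is correct and matches the source the paper is implicitly citing; it is a sound sketch rather than a complete proof.
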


For all $D\in\operatorname{NS}(A, \lambda_A)$, set $q_{(A,\lambda_A)}(D) = (D.\lambda_A)^2 - 2(D.D)$.

\begin{theorem}[{\cite[Theorem~2]{kani16}}]
 The curve $C/\overline{\FF}_q$ has an elliptic subcover of degree $n$ if and only if the
refined Humbert invariant $q_{\Jac(C)}$ primitively represents $n^2$.
 \end{theorem}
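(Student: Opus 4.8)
The plan is to derive this from the correspondence theorem stated immediately above, which already matches elliptic subcovers of $A=\Jac(C)$ of degree $n$ with primitive classes $[D]\in\operatorname{NS}(A,\lambda_A)$ satisfying $\Delta(D)=n^2$; what remains is to recognise $\Delta$ as the refined Humbert invariant $q_{\Jac(C)}$. All the intersection-theoretic input below (the structure of the Néron--Severi group of an abelian surface, the self-intersection $(\lambda_A.\lambda_A)=2!=2$ of a theta divisor) is insensitive to the characteristic, so working over $\overline{\FF}_q$ introduces no difficulty beyond what is already absorbed into the cited correspondence theorem.

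First I would record that $\operatorname{NS}(A,\lambda_A)$ is the quotient lattice $\operatorname{NS}(A)/\ZZ\lambda_A$ and check that the formula $q_{(A,\lambda_A)}(D)=(D.\lambda_A)^2-2(D.D)$ descends to it: replacing $D$ by $D+k\lambda_A$ and using $(\lambda_A.\lambda_A)=2$, the term $4k(D.\lambda_A)$ obtained by expanding the square is cancelled by the corresponding contribution of $-2(D.D)$, and likewise for the $k^2$ terms, so that $q_{(A,\lambda_A)}$ is a well-defined quadratic form on $\operatorname{NS}(A,\lambda_A)$.

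The crux is to establish $\Delta(D)=q_{(A,\lambda_A)}(D)$ for all $D$. Unwinding Kani's definition, $\Delta(D)$ is, up to sign, the discriminant of the intersection form restricted to the rank-two sublattice $\langle\lambda_A,D\rangle$:
\[
\det\begin{pmatrix}(\lambda_A.\lambda_A) & (\lambda_A.D)\\ (D.\lambda_A) & (D.D)\end{pmatrix}=2(D.D)-(D.\lambda_A)^2,
\]
so that (with the sign convention making $\Delta$ positive in the cases of interest) $\Delta(D)=(D.\lambda_A)^2-2(D.D)=q_{(A,\lambda_A)}(D)$. As a sanity check, a class coming from an elliptic subcover $f:C\to E$ of degree $n$ satisfies $(D.\lambda_A)=n$ and $(D.D)=0$, the latter because the abelian subvariety $f_*\Jac(E)\simeq E$ has self-intersection zero inside the surface; this gives $q_{\Jac(C)}(D)=n^2$, consistent with the correspondence theorem.

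With the identification $\Delta=q_{\Jac(C)}$ on $\operatorname{NS}(A,\lambda_A)$, the correspondence theorem reads: $C$ has an elliptic subcover of degree $n$ if and only if there is a primitive class $[D]\in\operatorname{NS}(A,\lambda_A)$ with $q_{\Jac(C)}(D)=n^2$, which is by definition the assertion that $q_{\Jac(C)}$ primitively represents $n^2$. I expect the main obstacle to be this middle step: pinning down Kani's precise normalisation of $\Delta$ --- its sign, and whether it is attached to a class in $\operatorname{NS}(A)$ or in the quotient by $\lambda_A$ --- so that the two invariants are literally equal rather than merely proportional, and checking that ``primitive'' carries the same meaning on both sides of the correspondence once one passes to the quotient by $\lambda_A$.
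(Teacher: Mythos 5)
The paper offers no proof of this statement: it is quoted verbatim from Kani (\cite[Theorem~2]{kani16}), just as the correspondence theorem preceding it is. Your proposal is therefore not being measured against an argument in the paper, but it does exactly what the juxtaposition of the two cited theorems implicitly intends: it recognises the Humbert-invariant formulation as a restatement of the correspondence between elliptic subcovers of degree $n$ and primitive classes with $\Delta(D)=n^2$. Your supporting computations are all correct --- the descent of $q_{(A,\lambda_A)}(D)=(D.\lambda_A)^2-2(D.D)$ to $\operatorname{NS}(A)/\ZZ\lambda_A$ using $(\lambda_A.\lambda_A)=2$, the identification of $\Delta(D)$ with the negative of the Gram determinant of $\langle\lambda_A,D\rangle$ (positive by the Hodge index theorem, since the restricted form has signature $(1,1)$), and the sanity check $(D.D)=0$, $(D.\lambda_A)=n$ for the class of an elliptic subcover. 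Be aware, though, that the entire mathematical substance remains in the cited correspondence theorem, which neither you nor the paper proves; your argument is a faithful translation between two formulations rather than an independent derivation, and the residual issues you flag yourself (Kani's exact normalisation of $\Delta$ and the meaning of primitivity in the quotient lattice) are precisely the points one would have to check against \cite{kani16} to make the translation airtight.
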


 Kani focuses on computing $q_{\Jac(C)}$ given $(E,E', \deg(f))$ and on computing how many curves correspond to a given triple $(E,E',\deg(f))$.
We are instead interested in the opposite direction: given $C$, compute the triples $(E,E',\deg(f))$, the Humbert invariant $q_C$ or simply bounding $\deg(f)$. Unfortunately, we are unaware of an efficient algorithm to compute $q_{\Jac(C)}$. See for example \cite{eda} for a discussion on how such an efficient algorithm could be use to break different isogeny based post-quantum cryptosystems. 

 A bound on the fields of definition follows from a bound on $\deg(f)$.
 
 \begin{proposition}\label{prop:fieldnn} Let $C/\FF_q$ be a genus-two curve and denote by $A=\Jac(C)$ its Jacobian variety.
 Let $\varphi:A\rightarrow B$ be a separable $(n,n)$-isogeny.
 Then $B$ is a principally polarized abelian variety defined over $\FF_{q^r}$ where
 \[
 r=n^3\prod_{\substack{\ell\text{ prime}\\\ell\mid n}}\frac{1}{\ell^3}(\ell+1)(\ell^2+1)
 \]
 and the isogeny $\varphi$ is defined over $\FF_{q^s}$ with $s=wr$ where $w=\#\Aut(C)$.
 \end{proposition}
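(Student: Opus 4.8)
The plan is to translate ``separable $(n,n)$-isogeny'' into a group-theoretic datum on the $n$-torsion, count how many such data exist, and use that Frobenius permutes them, so that the fields of definition are governed by the lengths of Frobenius orbits on a finite set of cardinality exactly $r$. Concretely, a separable $(n,n)$-isogeny $\varphi\colon A\to B$ of principally polarized abelian surfaces preserving the polarizations is, up to an isomorphism of the target, the quotient map $A\to A/K$ by a subgroup $K\subset A[n]$ with $K\simeq(\ZZ/n\ZZ)^2$ that is maximal isotropic for the $n$-Weil pairing $e_n$, and $B=A/K$ then carries a canonical principal polarization; so the first task is to count such $K$. Assuming $p\nmid n$ (so that $A[n]\simeq(\ZZ/n\ZZ)^4$ as an abstract group), the Chinese remainder theorem reduces this to $n=\ell^k$ a prime power, where the relevant subgroups form a single orbit under $\operatorname{Sp}_4(\ZZ/\ell^k\ZZ)$ with stabilizer the Siegel parabolic $P$. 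Their number is $[\operatorname{Sp}_4(\ZZ/\ell^k\ZZ):P]$, which modulo $\ell$ equals $(\ell+1)(\ell^2+1)$, and gets multiplied by $\ell^{3(k-1)}$ at level $\ell^k$ since the kernels of $\operatorname{Sp}_4(\ZZ/\ell^k\ZZ)\to\operatorname{Sp}_4(\FF_\ell)$ and $P(\ZZ/\ell^k\ZZ)\to P(\FF_\ell)$ have orders $\ell^{10(k-1)}$ and $\ell^{7(k-1)}$ (the dimensions of $\operatorname{Sp}_4$ and of $P$). Multiplying over the prime-power divisors of $n$ gives exactly $r=n^3\prod_{\ell\mid n}\ell^{-3}(\ell+1)(\ell^2+1)$ subgroups.

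Since $A$ is defined over $\FF_q$, the Frobenius $\pi$ acts on $A[n]$ as a symplectic similitude with multiplier $q$ (because $e_n(\pi x,\pi y)=e_n(x,y)^q$), hence it maps maximal isotropic subgroups to maximal isotropic subgroups and permutes the finite set $S$ of the $r$ subgroups above. The $\pi$-orbit of $K=\ker\varphi$ therefore has length at most $r$, so $K$ is stable under $\Gal(\overline{\FF}_q/\FF_{q^r})$; consequently $B=A/K$ with its polarization, and the quotient isogeny, descend to $\FF_{q^r}$, which is the first assertion. For the second, one must account for the fact that an $(n,n)$-isogeny is recovered from its kernel only up to post-composition with an automorphism of the target, equivalently (by Torelli, as $\Aut(\Jac C)=\Aut(C)$) up to pre-composition with an element of $\Aut(C)$, a group of order $w$; tracking Frobenius on this enriched datum — the kernel together with the automorphism needed to pin down the morphism itself — yields an orbit of length at most $wr$, so $\varphi$ is defined over $\FF_{q^{wr}}$.

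The step I expect to be the main obstacle is the precise prime-power count, namely getting the factor $\ell^{3(k-1)}$ exactly right (the orders of the congruence kernels of $\operatorname{Sp}_4$ and of the Siegel parabolic), together with the descent bookkeeping in the last step: one has to make precise that fixing the kernel reduces the remaining indeterminacy to the bounded automorphism group appearing in the statement, and that no field-of-moduli versus field-of-definition obstruction arises over a finite field. The counting is a finite-group computation; the automorphism bookkeeping is where the factor $w=\#\Aut(C)$ enters and is the point requiring care to state cleanly.
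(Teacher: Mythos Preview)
Your strategy is exactly the paper's: its three-line proof states only that the kernel of a separable $(n,n)$-isogeny is a maximal isotropic subgroup of $A[n]$ (citing Dolgachev for the induced principal polarization on $B$), that $r$ is the number of such subgroups, and that $s$ comes from the polarized automorphisms of $A$, equal to $\#\Aut(C)$ since $C$ is hyperelliptic. Your proposal is simply a fleshed-out version of the same argument, with the Lagrangian count made explicit via the index of the Siegel parabolic in $\operatorname{Sp}_4(\ZZ/\ell^k\ZZ)$ and the Galois-orbit reasoning spelled out.
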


 \begin{proof} The kernel of a separable $(n,n)$-isogeny is a maximal isotropic subgroup of $A[n]$, and hence $B$ is naturally equipped with a principal polarization \cite[Proposition~2.1]{Dolgachev}. The value of $r$ follows from counting the number of maximal isotropic groups in $A[n]$. The value of $s$ is obtained by counting automorphisms of $A$ preserving the polarization, which is equal to $\#\Aut(C)$ because $C$ is hyperelliptic.
 \end{proof}

 \begin{remark}\label{rem:fieldnn} If $B\sim E_1\times E_2$ with the product polarization then $E_i$ are defined over $\FF_{q^{2r}}$. 
 \end{remark}

We now give two methods to obtain elliptic subcovers of a given Jacobian varietity of a genus-two curve.

\subsubsection{First method: finding $(n,n)$-isogenies to a product of elliptic curves}

According to Proposition \ref{existencenn}, in the non-simple case there are $(n,n)$-isogenies from $A$ to a product of elliptic curves with the product polarization. Since $(n,n)$-isogenies can be computed efficiently \cite{cosset-robert,avisogenies,milio,radical}, we obtain the following algorithm.

\begin{algo}{Finding \unboldmath$(n,n)$-isogenies}
\textsc{Input:} & A curve $C$ of genus two with non-simple Jacobian variety. \\
\textsc{Output:} & An elliptic factor. \smallskip \\
1. & Set $n\gets 2$. \\
2. & Compute the torsion subgroup $\Jac(C)[n]$. \\
3. & For all maximal isotropic subgroups of $\Jac(C)[n]$: \\
4. & \qquad Compute the corresponding $(n,n)$-isogeny $\Jac(C)\to B$. \\
5. & \qquad If $ B$ splits: \\
6. & \qquad \qquad Return its elliptic factor. \\
7. & Set $n\gets n+1$ and go back to Step 2. \\
\end{algo}

For Step 5, see for example \cite{corte-real-costello-frengley}.
This algorithm terminates and a bound on its runtime may be derived from Section~\ref{theoretical-bounds}.

\subsubsection{Second method: computing regular differentials}

Let $C:y^2=F(x)$ be a genus-two curve. Assume that there is an elliptic curve $E:v^2=u^3+au+b$ and a map $C\rightarrow E$ given by $(x,y)\mapsto(u,v)=(f(x,y),g(x,y))=(f_1(x)+yf_2(x),g_1(x)+yg_2(x))$. We necessarily have $g(x,y)^2=f(x,y)^3+af(x,y)+b$
modulo the equation of $C$. This implies
\begin{equation}\label{eq:cover}
\left\{
\begin{array}{@{\displaystyle~}r@{~=~}l}
g_1^2+Fg_2^2 & b+af_1+f_1^3+3f_1f_2^2F,
\smallskip\\
2g_1g_2 & af_2+3f_1^2f_2+f_2^3F.
\end{array}
\right.
\end{equation}
Using the ideas of \cite[Section 6.2]{tony}, we notice that the pushforward of a regular differential of $E$ has to be regular differential of $C$ and hence a linear combination of $\frac{dx}{y}$ and $\frac{xdx}{y}$. This gives:
\begin{equation}\label{eq:cover2}
\left\{
\begin{array}{@{\displaystyle~}r@{~=~}l}
F(2f'_2+f_2) & 2g_1(\alpha x + \beta),
\smallskip\\
2f'_1 & 2g_2(\alpha x + \beta).
\end{array}
\right.
\end{equation}
From which, generically, that is, if $\alpha\neq 0$, we obtain $g_2=\frac{f'_1}{\alpha x + \beta}$ for a choice of linear factor of $f'_1$ and $g_1=\frac{F(2f'_2+f_2)}{2(\alpha x + \beta)}$ for a choice of $f_2$ such that $\alpha x + \beta\mid F(2f'_2+f_2)$.
Together with Equation~(\ref{eq:cover}), this implies $\deg(f_1)=\deg(f_2)+3$, generically.

We thus obtain the following algorithm.

\begin{algo}{Exploiting regular differentials}
\textsc{Input:} & A curve $C: y^2=F(x)$ of genus two with non-simple Jacobian variety. \\
\textsc{Output:} & An elliptic factor. \smallskip \\
1. & Set $d\gets 1$. \\v
2. & Let $w$ and $r$ be as in Proposition~\ref{prop:fieldnn}.\\
2. & For all $f_1\in\FF_{q^{wr}}[x]$ of degree $d+3$: \\
3. & \qquad For all linear factors $t$ of its derivative $f_1'$: \\
4. & \qquad \qquad For all $f_2\in\FF_{q^{2r}}[x]$ of degree $d$  such that $t \mid F(2f_2'+f_2)$: \\
5. & \qquad \qquad \qquad Let $g_1=\frac{F(2f_2'+f_2)}{2t}$ and $g_2=\frac{f_1'}{t}$. \\
6. & \qquad \qquad \qquad If Equation~(\ref{eq:cover}) is satisfied, return the elliptic factor. \\
7. & Set $d\gets d+1$ and go back to Step~2. \\
\end{algo}


\subsection{Supersingular case}
\label{sec:non-simple-ss}

The methods of the previous sections apply also to this case with $n=p$, the characteristic of the base field.
It may however be unpractical to compute $(p,p)$-isogenies.
We now provide an alternative method based on random walk techniques
which may also be adapted to other settings where the isogeny graph has the rapid mixing property.

\begin{proposition}\label{prop:product_supersingular}
There is an algorithm that on input two supersingular elliptic curves $E_1$ and $E_2$ over $\FF_{p^2}$
outputs a basis of $\End(E_1\times E_2)$ in expected time $\sqrt p (\log p)^{O(1)}$. 
\end{proposition}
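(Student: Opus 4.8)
The goal is to compute a basis of $\End(E_1 \times E_2)$ for supersingular $E_1, E_2$ over $\FF_{p^2}$ in time $\sqrt p \,(\log p)^{O(1)}$. The key structural fact is that $\End(E_1 \times E_2)$ is the $2\times 2$ matrix ring built from $\End(E_1)$, $\End(E_2)$, $\Hom(E_1,E_2)$, $\Hom(E_2,E_1)$, so it suffices to produce good representations of a $\ZZ$-basis of each of these four lattices — and by duality $\Hom(E_2,E_1)$ is the dual of $\Hom(E_1,E_2)$, while the diagonal blocks are the special case $E_1 = E_2$. The strategy is to reduce everything to the single curve case via a short isogeny. First I would use the rapid mixing of the supersingular $\ell$-isogeny graph over $\overline\FF_p$ (say $\ell = 2$): a random walk of length $O(\log p)$ from $E_2$ hits, with constant probability, a curve isogenous to $E_1$ by an isogeny $\psi : E_1 \to E_2'$ of smooth degree $\ell^{O(\log p)}$, which is polynomial-size and comes with a good representation (kernel, dual kernel, degree as the trace bound). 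Concretely one wants $E_2' = E_1$, so one runs random walks from $E_2$ until landing on $E_1$ itself; since the graph has $\Theta(p)$ vertices this is where the $\sqrt p$ cost enters — a birthday/claw-finding or rho-type search between walks from $E_1$ and walks from $E_2$ collides in expected time $\sqrt p\,(\log p)^{O(1)}$ and yields such a $\psi : E_1 \to E_2$.

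**Reducing to one curve.** Once a good representation of an isogeny $\psi : E_1 \to E_2$ of polynomial-size smooth degree $D = \deg\psi$ is in hand, composition with $\psi$ and $\widehat\psi$ identifies $\Hom(E_1,E_2) \otimes \QQ$ with $\End(E_1) \otimes \QQ$: the lattice $\widehat\psi \circ \Hom(E_1,E_2) \subseteq \End(E_1)$ sits between $D \cdot \End(E_1)$ and $\End(E_1)$, so it has bounded index and, once we know $\End(E_1)$, Proposition~\ref{prop:augment-subring} (applied with the good representations obtained by precomposing basis elements of $\End(E_1)$ with $\widehat\psi$) saturates it to recover $\Hom(E_1,E_2)$. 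Thus the whole problem reduces to computing a good representation of a basis of $\End(E)$ for a single supersingular curve $E/\FF_{p^2}$. For this I would invoke the known $\sqrt p\,(\log p)^{O(1)}$ algorithms for supersingular endomorphism rings \cite{EHLMP20,FIKMN23,PW23}, which produce exactly such a description (four endomorphisms spanning a maximal order, each as a smooth-degree isogeny chain); the quadratic form data $(r, r^\dagger, D)$ needed for a good representation is then available because each generator is a composition of small-degree steps whose duals and degrees are explicit, and the trace bounds follow from the degree.

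**Assembling the answer.** With bases of $\End(E_1)$, $\End(E_2)$, and $\Hom(E_1,E_2)$ (hence $\Hom(E_2,E_1)$ via $\dagger$) all in good representation, the eight-or-so generators assemble into the $2\times 2$ block form, and Lemma~\ref{lemma:scalarproductofisogenies} computes the Gram matrix of this generating set on $\End(E_1\times E_2)$ with respect to $\langle\alpha,\beta\rangle = \tr(\alpha\circ\beta^\dagger)$, from which a $\ZZ$-basis is extracted by lattice reduction; a final call to Proposition~\ref{prop:augment-subring} guarantees the output lattice is actually all of $\End(E_1\times E_2)$ rather than a finite-index sublattice. Every step beyond the single-curve endomorphism computation is polynomial time, so the total expected running time is $\sqrt p\,(\log p)^{O(1)}$.

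**Main obstacle.** The genuinely hard ingredient is the single-curve supersingular endomorphism ring computation, which is exactly the state-of-the-art $\sqrt p$ subroutine we are allowed to cite; the only real work in this proof is the bookkeeping that turns its output into a \emph{good} representation (tracking duals and trace bounds along the smooth-degree chains) and checking that the random-walk / collision search producing $\psi : E_1 \to E_2$ has the claimed expected complexity and yields polynomial-length data. A secondary subtlety is that everything must be done over $\overline\FF_p$ while our curves live over $\FF_{p^2}$; since all supersingular curves and all their endomorphisms are defined over $\FF_{p^2}$ up to a bounded extension, this costs only a constant factor, but it should be stated carefully.
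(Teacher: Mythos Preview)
Your high-level strategy coincides with the paper's: compute $\End(E_1)$ and $\End(E_2)$ via the known $\sqrt p\,(\log p)^{O(1)}$ algorithms, find a connecting $2$-power isogeny $\psi:E_1\to E_2$ by a meet-in-the-middle search in the $2$-isogeny graph, then saturate $\psi\circ\End(E_1)$ (resp.\ $\End(E_1)\circ\widehat\psi$) inside $\Hom(E_1,E_2)$ (resp.\ $\Hom(E_2,E_1)$) and assemble the four blocks. The paper cites \cite[Theorem~8.8 and Proposition~8.7]{PW23} for the first two steps, exactly as you suggest.

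The gap is in the saturation step. You invoke Proposition~\ref{prop:augment-subring} and then assert that ``every step beyond the single-curve endomorphism computation is polynomial time'', but the paper itself notes immediately after that proposition that its algorithm is \emph{exponential} in general. The reason it is not automatically polynomial here, even though the only relevant prime is $\ell=2$ and the rank is bounded, is that after each refinement you must continue to evaluate the new generators $\beta/2$ on $A[2]$; without an efficient representation of $\beta/2$ you are forced to test $(2^k\beta)(A[2^{k+1}])=0$ for $k$ growing up to $e=O(\log p)$, and the $2^{k+1}$-torsion may live over extensions of degree $2^{O(k)}$, which is not polynomial. The paper's proof supplies precisely this missing ingredient: it computes the chain of ideals $I_i=\{\alpha\in\End(E_1):2^i\mid\alpha\circ\widehat\varphi\}$ one step at a time, using \cite[Theorem~3]{HLMW23} to obtain an efficient representation of the quotient $\alpha\circ\widehat\varphi/2^i$ so that each test is performed on $E_1[2]$ only. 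That division result is the technical heart of the polynomial-time claim, and it (or an equivalent) needs to appear in your argument.

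Two minor points: the ``final call to Proposition~\ref{prop:augment-subring}'' is unnecessary, since the block decomposition $\End(E_1\times E_2)=\left(\begin{smallmatrix}\End(E_1)&\Hom(E_2,E_1)\\\Hom(E_1,E_2)&\End(E_2)\end{smallmatrix}\right)$ is an equality, not merely a finite-index inclusion; and your worry about $\overline\FF_p$ versus $\FF_{p^2}$ is moot here, as every supersingular curve and every homomorphism between two of them is already defined over $\FF_{p^2}$.
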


\begin{proof}
From~\cite[Theorem~8.8]{PW23}, there is an algorithm which finds bases of $\End(E_1)$ and $\End(E_2)$ in time $\tilde O(\sqrt p)$. Within that same running time, one can compute an isogeny $\varphi : E_1 \to E_2$ of degree $2^e$ for some $e \in \NN$ in efficient representation (see, for instance,~\cite[Proposition~8.7]{PW23}; that is simply a baby-step giant-step resolution of the $2$-isogeny path problem). We can ensure that $\varphi$ has cyclic kernel, i.e., it is a non-backtracking path in the $2$-isogeny graph (by greedily pruning backtracking sub-paths).

Then, one can compute in polynomial time a basis of the ideal $I = \Hom(E_2,E_1)\circ\varphi \subset \End(E_1)$ as follows.
This ideal consists in all endomorphisms $\alpha$ such that $\deg \varphi$ divides $\alpha \circ \widehat\varphi$, i.e. such that $(\alpha \circ \widehat\varphi)(E_1[2^e]) = 0$.
Let 
$$I_i = I + 2^i\End(E_1) = \left\{\alpha \in \End(E_1) \mid 2^i \text{ divides } \alpha \circ \widehat\varphi\right\},$$
so that $I_0 = \End(E_1)$ and $I_e = I$. We compute $I_i$ iteratively as follows:
\begin{enumerate}
\item Let $I_0 = \End(E_1)$.
\item For each $0 \leq i < e$, compute $I_{i+1} = \left\{\alpha \in I_i \mid \frac{\alpha \circ \widehat\varphi}{2^i}(E_1[2]) = 0\right\}$, the division $\frac{\alpha \circ \widehat\varphi}{2^i}$ being evaluated  iteratively by~\cite[Theorem~3]{HLMW23}.
\end{enumerate}
We then have a basis of $\Hom(E_2,E_1) = (I \circ \widehat\varphi) / [\deg \varphi]$. Similarly, we have a basis of $\Hom(E_1,E_2)$. We conclude from the fact that
\[\End(E_1\times E_2) = \begin{pmatrix}\End(E_1) & \Hom(E_2,E_1) \\ \Hom(E_1,E_2) & \End(E_2)\end{pmatrix}.\]
\end{proof}

\begin{corollary}
    Assuming that~\cite[Hypothesis~1]{craigsmith} holds, there is an algorithm that on input a superspecial Jacobian $A$ over $\FF_{p^2}$
outputs a basis of $\End(A)$ in expected time $p (\log p)^{O(1)}$.
\end{corollary}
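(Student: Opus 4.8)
The plan is to combine a rapidly mixing random walk in the superspecial $(2,2)$-isogeny graph with the endomorphism computation for products of supersingular elliptic curves provided by Proposition~\ref{prop:product_supersingular}. Starting from the vertex $A$, I would perform a random walk of length $\Theta(\log p)$ in the graph whose vertices are superspecial principally polarized abelian surfaces over $\overline{\FF}_{p^2}$ and whose edges are $(2,2)$-isogenies, each step being a Richelot isogeny computable over a bounded extension of $\FF_{p^2}$ in time $(\log p)^{O(1)}$. By~\cite[Hypothesis~1]{craigsmith}, this graph has a spectral gap bounded away from zero, so after $\Theta(\log p)$ steps the endpoint $B$ is within, say, $1/p^2$ in total variation of the stationary distribution. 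I then test whether $B$ splits as a product $E_1\times E_2$ of elliptic curves with the product polarization, for instance via~\cite{corte-real-costello-frengley} or the vanishing of the appropriate Igusa invariant, in time $(\log p)^{O(1)}$; if not, I restart from $A$. The number of superspecial principally polarized abelian surfaces is $\Theta(p^3)$ by the dimension-$2$ mass formula, while the split ones correspond, up to isomorphism, to unordered pairs of supersingular $j$-invariants, of which there are $\Theta(p^2)$; hence a near-uniform endpoint is split with probability $\Theta(1/p)$, the expected number of restarts is $O(p)$, and this stage finishes in expected time $p(\log p)^{O(1)}$. I record the resulting isogeny $\varphi : A \to B = E_1\times E_2$ as the composition of the $\Theta(\log p)$ Richelot steps; this is an efficient representation of $\varphi$, and likewise of $\widehat\varphi$ and $\varphi^\dagger$, with $\deg\varphi = p^{O(1)}$.

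Next I would transport the answer back to $A$. Apply Proposition~\ref{prop:product_supersingular} to obtain a basis $(\eta_i)_i$ of $\End(E_1\times E_2)$ in good representation, in expected time $\sqrt p\,(\log p)^{O(1)}$, and set $\beta_i = \widehat\varphi\circ\eta_i\circ\varphi$. As in the proof of Proposition~\ref{prop:two_paths_for_endring}, the $\beta_i$ form a full-rank family generating a lattice $\Lambda_B \subseteq \End(A)$ with $\deg(\varphi)^2\End(A) \subseteq \Lambda_B$, so $[\End(A):\Lambda_B]$ is a power of $2$; standard bookkeeping with $\varphi^\dagger$ turns $(\beta_i)$ into a good representation, with discriminant bound $D = p^{O(1)}$ since $\deg\varphi = p^{O(1)}$. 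Finally, run the algorithm of Proposition~\ref{prop:augment-subring} on $(\beta_i)$ to recover a good representation of a basis of $\spanspan_\QQ(\beta)\cap\End(A) = \End(A)$. The only prime that can occur in that algorithm is $\ell = 2$: enumerating $S/2S$ costs $2^{\rank\End(A)} = 2^{16} = O(1)$ per iteration, testing $\beta(A[2]) = 0$ is polynomial time (evaluate $\beta$ as a composition), and each successful iteration halves the index, which is at most $\deg(\varphi)^{O(1)} = p^{O(1)}$; so this stage runs in $(\log p)^{O(1)}$. Summing the three stages gives total expected time $p(\log p)^{O(1)}$.

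I expect the main obstacle to be exactly the mixing-and-counting part: one needs that the $(2,2)$-isogeny graph mixes rapidly, which is precisely what~\cite[Hypothesis~1]{craigsmith} supplies, together with the fact that split surfaces form a $\Theta(1/p)$ fraction of all vertices, which I would justify from the genus-$2$ mass formula and the count of supersingular elliptic curves. A secondary but essential point — and the one that makes the transport cheap — is the observation that, although the algorithm of Proposition~\ref{prop:augment-subring} is exponential in $\log\ell$ in general, here only the single fixed prime $\ell = 2$ arises (because $\deg\varphi$ is a power of $2$), so its cost is polynomial in $\log p$. Everything else is routine manipulation of efficient and good representations, using Lemma~\ref{lemma:scalarproductofisogenies} to access the quadratic form whenever Gram matrices are needed.
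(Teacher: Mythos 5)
Your argument is correct, and its first half (the $O(\log p)$-step random walk in the $(2,2)$-isogeny graph, the $\Theta(p^{-1})$ density of split surfaces among superspecial ones, hence expected $O(p)$ restarts) is exactly what the paper does. Where you diverge is in transporting $\End(E_1\times E_2)$ back to $A$: the paper runs \emph{two} walks, one in the $(2,2)$-graph and one in the $(3,3)$-graph, so as to obtain isogenies $\varphi_2:A\to B_2$ and $\varphi_3:A\to B_3$ of coprime degrees and conclude immediately via Proposition~\ref{prop:two_paths_for_endring} that $\Lambda_{B_2}+\Lambda_{B_3}=\End(A)$, with no saturation step at all. You instead use a single walk, accept that $\Lambda_B=\widehat\varphi\circ\End(B)\circ\varphi$ has index a power of $2$ in $\End(A)$ (since $\deg(\varphi)^2\End(A)\subset\Lambda_B$ and $\deg\varphi$ is a power of $2$), and saturate locally at the single prime $\ell=2$ with Proposition~\ref{prop:augment-subring}; your observation that the generic exponential cost of that proposition collapses here ($2^{16}$ cosets per round, $O(\log p)$ rounds, index bounded by $\deg(\varphi)^{32}=p^{O(1)}$) is the point that makes this work, and it is sound. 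The trade-off: the paper's route is a clean black-box application of its coprime-degree proposition but requires computing $(3,3)$-isogenies and succeeding in a second walk; yours needs only Richelot $(2,2)$-isogenies and one successful walk, at the price of an explicit local saturation at $2$ (for which you correctly note one must restrict the algorithm of Proposition~\ref{prop:augment-subring} to $\ell=2$ rather than to all primes dividing $\det(G)$, since $\det(G)$ also carries factors of $p$ from $\disc(\End(A))$ at which the naive enumeration would be exponential). Both yield the claimed $p(\log p)^{O(1)}$ expected time.
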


\begin{proof}
    For $\ell\in\{2,3\}$, assuming that~\cite[Hypothesis~1]{craigsmith} holds, 
    there is a bound $n = O(\log(p))$ such that a random path 
    $\varphi_\ell : A \to B_\ell$ of length $n$ in the $(\ell,\ell)$-isogeny 
    graph reaches a target $B$ that is close to uniformly distributed in the set of superspecial abelian surfaces over $\FF_{p^2}$. Then $B_\ell$ is a product $E_1\times E_2$ with probability $\Omega(p^{-1})$. 
    Therefore, one can find such isogenies $\varphi_2$ and $\varphi_3$ in 
    time $p (\log p)^{O(1)}$ from $A$ to products of elliptic curves $B_2$ 
    and $B_3$. One can compute $\End(B_\ell)$ within the claimed running time 
    with  Proposition~\ref{prop:product_supersingular}, and, deduce $\End(A)$ 
    with Proposition~\ref{prop:two_paths_for_endring}
\end{proof}


\section{Surfaces with extra automorphisms}
\label{sec:extra-auto}

When the curve $C$ admits automorphisms other than $\pm\id$,
finding a decomposition of its Jacobian variety $\Jac(C)$ is easier as the following result shows.

\begin{theorem}[{\cite[Theorem B]{kanirosen}}]\label{th:1}
Let $C$ be a curve and let $G$ be a finite subgroup of the automorphism group $\Aut(C)$ such that $G=H_1\cup \cdots \cup H_n$, where the $H_i$'s are subgroups of $G$ with $H_i\cap H_j=\{\id\}$ for $i\neq j$. Then we have an isogeny
$$\Jac(C)^{n-1}\times \Jac(C/G)^g\sim \Jac(C/H_1)^{h_1}\times \cdots \times \Jac(C/H_n)^{h_n}$$
where $g=|G|$, $h_i=|H_i|$ and $C/G$, $C/H_1$,\ldots, $C/H_n$ denote the curves obtained by quotienting $C$ by the subgroups $G,H_1,\ldots, H_n$ respectively.
\end{theorem}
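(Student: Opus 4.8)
The plan is to run the classical idempotent–relation method. Write $R=\End(\Jac(C))\otimes\QQ$. The action of $G$ on $C$ gives, via $\sigma\mapsto\sigma_*$, a ring homomorphism $\ZZ[G]\to\End(\Jac(C))$, which I extend to a $\QQ$-algebra map $\QQ[G]\to R$ and use to view elements of $\QQ[G]$ as endomorphisms of $\Jac(C)$ up to isogeny. For each subgroup $H\le G$ I would introduce the idempotent $e_H=\frac1{|H|}\sum_{h\in H}h\in\QQ[G]$ and record the classical identification of the abelian subvariety $\im(e_H)\subseteq\Jac(C)$ with $\Jac(C/H)$ up to isogeny: if $\rho_H\colon C\to C/H$ denotes the degree-$|H|$ quotient map, then $\rho_H^*\circ\rho_{H*}=\sum_{h\in H}h_*=|H|\,e_H$, and since $\rho_{H*}$ is surjective while $\rho_H^*$ has finite kernel, $\im(e_H)=\im(\rho_H^*)\sim\Jac(C/H)$. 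In particular $\im(1)=\Jac(C)$ and $\im(e_G)\sim\Jac(C/G)$.

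Next I would exploit the covering hypothesis. Since $G=H_1\cup\cdots\cup H_n$ with $H_i\cap H_j=\{\id\}$ for $i\ne j$, every non-identity element of $G$ lies in exactly one $H_i$, so summing in $\ZZ[G]$ gives $\sum_{i=1}^n\sum_{h\in H_i}h=(n-1)\,\id+\sum_{g'\in G}g'$, i.e.
\[
\sum_{i=1}^n h_i\,e_{H_i}=(n-1)\cdot 1+g\,e_G \qquad\text{in } R.
\]
Reading the positive integer coefficients as multiplicities, this is an equality between two sums of idempotents of $R$ — on the left, $h_i$ copies of $e_{H_i}$ for $i=1,\dots,n$; on the right, $n-1$ copies of $1$ and $g$ copies of $e_G$ — both sides containing the same number $\sum_i h_i=g+n-1$ of idempotents.

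The final ingredient is the key lemma of Kani–Rosen, which I would prove (this is the heart of the theorem): for any abelian variety $B$ and idempotents $\varepsilon_1,\dots,\varepsilon_r,\varepsilon_1',\dots,\varepsilon_s'$ of $\End(B)\otimes\QQ$ with $\sum_j\varepsilon_j=\sum_k\varepsilon_k'$, one has $\prod_j\im(\varepsilon_j)\sim\prod_k\im(\varepsilon_k')$. Applying this to the displayed relation, together with $\im(e_{H_i})\sim\Jac(C/H_i)$, $\im(e_G)\sim\Jac(C/G)$ and $\im(1)=\Jac(C)$, immediately yields $\prod_i\Jac(C/H_i)^{h_i}\sim\Jac(C)^{n-1}\times\Jac(C/G)^g$, which is the assertion. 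To prove the lemma I would invoke Poincaré complete reducibility: $B\sim\prod_i A_i^{n_i}$ with the $A_i$ simple and pairwise non-isogenous, so $R\cong\prod_i M_{n_i}(D_i)$ with $D_i$ a division algebra, and every idempotent $\varepsilon$ satisfies $\im(\varepsilon)\sim\prod_i A_i^{m_i(\varepsilon)}$ for non-negative integers $m_i(\varepsilon)$. Composing the reduced trace $M_{n_i}(D_i)\to F_i$ (the center) with $\operatorname{Tr}_{F_i/\QQ}$ produces a $\QQ$-linear form $\tau_i$ on $R$ with $\tau_i(\varepsilon)=c_i\,m_i(\varepsilon)$ for a fixed constant $c_i>0$, since all primitive idempotents of $M_{n_i}(D_i)$ are conjugate and hence share a common nonzero value of $\tau_i$; evaluating $\tau_i$ on the relation gives $\sum_j m_i(\varepsilon_j)=\sum_k m_i(\varepsilon_k')$ for every $i$, and the isogeny follows. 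I expect this lemma — passing from a purely ring-theoretic identity of idempotents to an actual isogeny of abelian varieties, which relies on the semisimplicity of $R$ and the compatibility of the "dimension" invariants $m_i$ with reduced traces — to be the main obstacle; the combinatorial identity in $\QQ[G]$ and the identification $\im(e_H)\sim\Jac(C/H)$ are routine.
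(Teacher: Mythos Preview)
The paper does not prove this theorem; it merely quotes it from Kani--Rosen \cite{kanirosen} and uses it as a black box. Your argument is correct and is precisely the idempotent-relation method of the original reference: the combinatorial identity $\sum_i h_i e_{H_i}=(n-1)\cdot 1+g\,e_G$ in $\QQ[G]$, the identification $\im(e_H)\sim\Jac(C/H)$ via $\rho_H^*\rho_{H*}=|H|e_H$, and the passage from an additive relation among idempotents to an isogeny of products via the semisimplicity of $\End(\Jac(C))\otimes\QQ$ and reduced traces. There is nothing further to compare.
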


In the specific case of genus-two curves, we have the more explicit statement below.

\begin{theorem}[{\cite[Theorem 2]{Iezzi}}]\label{JacHyper}
 Assume that the polynomial $f(x)$ factors
completely over the finite field $\FF_q$, i.e.
$$f(x)=c\prod_{i=1}^{6}(x-a_i)$$
with $a_i \in \FF_q$ and $a_i \neq a_j$ when $i \neq j$.
Assume that
$$(a_2-a_4)(a_1-a_6)(a_3-a_5)=(a_2-a_6)(a_1-a_5)(a_3-a_4).$$
and set
\begin{align*}
\lambda&=\frac{(a_1-a_3)(a_2-a_4)}{(a_2-a_3)(a_1-a_4)},\qquad
\mu=\frac{(a_1-a_3)(a_2-a_5)}{(a_2-a_3)(a_1-a_5)}, \qquad \text{and} \\
\theta &= c(a_2-a_3)(a_1-a_4)(a_1-a_5)(a_1-a_6).
\end{align*}
Assume moreover that  there exists a square root of $\lambda(\lambda-\mu)$ in the finite field $\FF_q$.
Then the Jacobian of the hyperelliptic curve $C: y^2=f(x)$ decomposes over $\FF_q$ as 
\[ \Jac(C) \sim E_+ \times E_- \]
where $E_+$ and $E_-$ are the elliptic curves defined  by the equations
$$ y^2=\frac{\theta(1-\mu)}{1-\lambda}x(x-1)
\left(x-\frac{(1-\lambda)\left(\mu-2\lambda\pm2\sqrt{\lambda(\lambda-\mu)}\right)}{\mu-1}\right).
$$
\end{theorem}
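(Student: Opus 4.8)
The plan is to exhibit explicit degree-$2$ maps $C \to E_\pm$ and verify they are morphisms, which by the Picard--Bolza theory of elliptic subcovers forces the stated isogeny decomposition of $\Jac(C)$. First I would normalize the situation: since $f$ splits completely and $C$ is hyperelliptic of genus two, I can send three of the branch points to $0,1,\infty$ by a fractional linear transformation on $x$, and after scaling $y$ I may assume
\[
C : y^2 = x(x-1)(x-\lambda)(x-\mu)(x-\nu)
\]
for suitable cross-ratios, where $\lambda,\mu$ are as defined in the statement (the parameter $\theta$ records the overall scaling that was absorbed). The Richelot-type condition $(a_2-a_4)(a_1-a_6)(a_3-a_5)=(a_2-a_6)(a_1-a_5)(a_3-a_4)$ is precisely the closure condition under which the six roots split into two triples that are ``aligned'' by a common involution of $\PP^1$; concretely it says there is an involution $\tau : x \mapsto (ax+b)/(cx+d)$ of order two permuting the branch locus in two orbits of three points each. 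The existence of $\sqrt{\lambda(\lambda-\mu)}$ in $\FF_q$ is exactly the condition that the fixed points of this involution (hence the quotient map and the resulting elliptic curves) are rational over $\FF_q$ rather than over a quadratic extension.

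Next I would write down the two quotient curves. Pulling back by $\tau$ on $C$ (lifting the $\PP^1$-involution to a curve automorphism, with the two choices of sign in the lift corresponding to the $\pm$) gives an order-two automorphism of $C$ distinct from the hyperelliptic involution; the quotient of $C$ by the group generated by each such automorphism is a curve of genus one, and Theorem~\ref{th:1} (with $G$ the Klein four-group generated by the two lifts and the hyperelliptic involution, $H_1,H_2$ the two order-two subgroups containing the new automorphisms) yields $\Jac(C) \sim \Jac(C/H_1) \times \Jac(C/H_2)$. It then remains to compute the quotient curves explicitly. Using the invariant coordinate $u$ of $\tau$ on $\PP^1$ and a suitable invariant section $v$, one expresses $u$ as a Möbius function of $x$ and $v$ as $y$ times a polynomial in $x$; substituting into $y^2 = f(x)$ and simplifying produces a Weierstrass equation in $u,v$. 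Carrying this through — and this is the step I expect to be the main obstacle — one checks that the two elliptic quotients have exactly the Legendre-type models
\[
y^2 = \frac{\theta(1-\mu)}{1-\lambda}\, x(x-1)\left(x - \frac{(1-\lambda)\bigl(\mu - 2\lambda \pm 2\sqrt{\lambda(\lambda-\mu)}\bigr)}{\mu-1}\right),
\]
the square root appearing because the third branch point of each quotient is a root of a quadratic whose discriminant is (a constant times) $\lambda(\lambda-\mu)$.

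The bookkeeping in that final substitution is the delicate part: one must track the constant $\theta$ and the affine normalization carefully so that the leading coefficient and the location of the moving branch point come out exactly as stated, and one must confirm that the two sign choices genuinely give the two complementary subcovers $E_+$ and $E_-$ rather than isomorphic curves. A clean way to organize it is to first do the computation over $\QQ(\lambda,\mu,c,a_1,\dots,a_6)$ (or a convenient specialization) to pin down the closed form, then observe that every operation used is defined over $\FF_q$ under the stated hypotheses, so the identity of function fields descends. Finally I would invoke the complementary-subcover exact sequence $0 \to \Jac(E_+) \to \Jac(C) \to \Jac(E_-) \to 0$ recalled before Proposition~\ref{existencenn} to upgrade the isogeny $\Jac(C)\sim E_+\times E_-$ to one respecting the stated data, completing the proof.
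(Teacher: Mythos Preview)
The paper does not supply a proof of this statement; it is quoted from \cite[Theorem~2]{Iezzi} and used as a black box, so there is no in-paper argument to compare yours against.

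Your strategy is nonetheless the standard one, and the paper in fact carries out exactly this computation for the special normal form $C_{t,s}:y^2=x^6+tx^4+sx^2+1$ in the subsection on automorphism groups containing $V_4$: exhibit a non-hyperelliptic involution of $C$, take the two genus-one quotients by its lifts, and invoke Theorem~\ref{th:1} with $G\cong V_4$ to obtain $\Jac(C)\sim E_+\times E_-$. One point to correct: an involution of $\PP^1$ cannot have ``two orbits of three points each''; its orbits have size one or two, and the cross-ratio condition here forces three orbits of size two on the branch locus (three swapped pairs of Weierstrass points). You may be conflating this with the Richelot factorization of the sextic into three quadratics, which is a related but different combinatorial datum. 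This slip does not undermine the outline, but you should straighten it out before carrying through the explicit quotient computation that produces the stated Legendre parameters.
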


We now consider each specific case depending on the automorphism group type.
For each such type, Table~\ref{table:extra} gives the associated family of genus-two curves;
see \cite{CardonaQuer} and \cite{Bolza2} for details.

\begin{table}
\begin{center}
\begin{tabular}{c|c}
$\Aut(C)$ & Family \\ \hline
$C_2\times C_5$ & $y^2=x^5-1$\\
$\tilde{S}_4$ & $y^2=x^5-x$\\
$2D_{12}$ & $y^2=x^6-1$\\
$D_{12}$ & $y^2 = x^6 + tx^3 + 1$\\
$D_8$ & $y^2 = x^5 + tx^3 + x$\\
$V_4$ & $y^2 = x^6 + tx^4 + sx^2 + 1$
\end{tabular}
\end{center}
\caption{List of genus-two curves with extra automorphisms.}
\label{table:extra}
\end{table}


\subsection{Automorphism groups admitting a subgroup of type $V_4$}

Except for those with automorphism group $C_2\times C_5$,
all families in the table above are specializations of the family of curves
\[
C_{t,s}:y^2=x^6+tx^4+sx^2+1
\]
over a finite field $\overline{\FF}_p$ with $p$ odd.
The quotient by the automorphism $(x,y)\mapsto(-x,y)$ produces the degreee-two morphism
$$
\phi:\function{C_{t,s}}{E_{t,s}:v^2=u^3+tu^2+su+1}{(x,y)}{(u,v)=(x^2,y).}
$$

The complementary elliptic subcover of degree two is
$$
\phi':\function{C_{t,s}}{E_{s,t}:v^2=u^3+su^2+tu+1}{(x,y)}{(u,v)=(1/x^2,y/x^3),}
$$
which can also be described as the quotient of $C_{t,s}$ by the automorphism $(x,y)\mapsto(-x,-y)$.

These two covers produce a $(2,2)$-isogeny
$$
\Phi=\phi^*\times\phi'^*:
\function{E_{t,s}\times E_{s,t}}{\Jac(C_{t,s})}{(P-\infty,Q-\infty)}{\displaystyle
\sum_{R\in\phi^{-1}(P)}R-\sum_{R\in\phi^{-1}(\infty)}R+\sum_{R\in\phi'^{-1}(Q)}R-\sum_{R\in\phi'^{-1}(\infty)}R,
}
$$
whose kernel is contained in $(E_{t,s}\times E_{s,t})[2]$. Write $u^3+tu^2+su+1=(u-\alpha_1)(u-\alpha_2)(u-\alpha_3)$. Then $u^3+su^2+tu+1=(u-\frac{1}{\alpha_1})(u-\frac{1}{\alpha_2})(u-\frac{1}{\alpha_3})$. Set $P_i^{\pm}=(\pm\sqrt{\alpha_i},0)$, so that $\phi^{-1}((\alpha_i,0))=P_i^+-\infty+P_i^--\infty$. Under the usual identification of $E$ with its Jacobian variety, the kernel of $\Phi$ is
$$
\left\{\infty\times\infty\right\}\cup\left\{(\alpha_i,0)\times\left(\frac{1}{\alpha_i},0\right):i\in\{1,2,3\}\right\}.
$$
Indeed, we have $\Phi\left((\alpha_i,0)\times\left(\frac{1}{\alpha_i},0\right)\right)=\operatorname{div}(x^2-\alpha_i)=0$.

\begin{remark}
The elliptic curves $E_{t,s}$ and $E_{s,t}$ are in general not isogenous to each other.
\end{remark}

We can now use the results of Section~\ref{sub:useisogeny} to determine the endomorphism ring of $\Jac(C)$.
Consider the dual $(2,2)$-isogeny $\widehat\Phi:\Jac(C_{t,s})\to E_{t,s}\times E_{s,t}$.
It allows us to bound the endomorphism ring from above and below as follows
$$
2\End\left(E_{t,s}\times E_{s,t}\right)\subset\End\left(\Jac(C_{t,s})\right)\subset\frac{1}{2}\End\left(E_{t,s}\times E_{s,t}\right)
$$
where the inclusions are given by the maps:
\begin{align*}
&\function{2\End(E_{t,s}\times E_{s,t})}{\Jac\left(C_{t,s}\right)}{2\psi}{\Phi \circ \psi \circ \widehat\Phi}
\\
&\function{\End\left(\Jac(C_{t,s})\right)}{\frac{1}{2}\End\left(E_{t,s}\times E_{s,t}\right)}{\varphi}{\frac{1}{2}\widehat{\Phi}\circ\varphi\circ\Phi}
\end{align*}

In order to finally identify $\End(\Jac(C_{t,s}))$ among orders which satisfy those bounds,
we check which elements $\frac{1}{2}\psi\in\frac{1}{2}\End\left(E_{t,s}\times E_{s,t}\right)$
can be written as $\frac{1}{2}\widehat\Phi\circ\varphi\circ\Phi$, that is, when $\psi=\widehat\Phi\circ\varphi\circ\Phi$.


\subsection{Automorphism type $C_2\times C_5$}

To conclude this section, we focus on the remaining case of the curve $C:y^2=x^5-1$ defined over a finite field $\FF_p$ with $p\neq 2,5$.

We clearly have the inclusion $\mathbb{Z}[\zeta_5]\subset\End(\Jac(C))$.
In the simple case, that is, when $p$ is totally split in $\mathbb{Z}[\zeta_5]$, this is an equality.
In the non-simple case, we can use the results of Section~\ref{sub:useisogeny} since thanks to
\cite[Theorem 1.2]{zaytsev} we do know the $p$-rank and the $a$-number of the Jacobian variety.

\printbibliography

\end{document}